\newtheorem{theorem}{Theorem}[section]
\newtheorem{proposition}[theorem]{Proposition}
\newtheorem{lemma}[theorem]{Lemma}
\newtheorem{corollary}[theorem]{Corollary}
\newtheorem{problem}[theorem]{Problem}
\theoremstyle{definition}
\newtheorem{definition}[theorem]{Definition}
\newtheorem{example}[theorem]{Example}
\theoremstyle{remark}
\newtheorem{remark}[theorem]{Remark}
\numberwithin{equation}{section}
\newcommand{\R}{\mathbb{R}}
\newcommand{\D}{\mathbb{D}}
\newcommand{\ve}{\varepsilon}
\definecolor{darkgreen}{RGB}{0,153,0}
\definecolor{darkred}{RGB}{204,0,0}
\definecolor{darkblue}{RGB}{0, 51,204}
\definecolor{gold}{RGB}{255,205,0}
\definecolor{red}{RGB}{242,43,29}
\begin{document}

\title{Bypass moves in convex hypersurface theory}

\author{Joseph Breen}
\address{University of Alabama, Tuscaloosa, AL 35401}
\email{jjbreen@ua.edu} \urladdr{https://sites.google.com/view/joseph-breen}

\author{Austin Christian}
\address{California Polytechnic State University, San Luis Obispo, CA 93407}
\email{achris66@calpoly.edu} \urladdr{https://sites.google.com/view/austin-christian}

\thanks{JB was partially supported by NSF Grant DMS-2038103 and an AMS-Simons Travel Grant. AC was partially supported by NSF Grants DMS-1745583 and DMS-2532551 and an AMS-Simons Travel Grant.}

\begin{abstract}
We construct bypass attachments in higher dimensional contact manifolds that, when attached to a neighborhood of a Weinstein hypersurface, yield a neighborhood of a new Weinstein hypersurface, obtained via local modifications to the Weinstein handle decomposition of the first. For context, we give $3$-dimensional analogues of these bypass attachments and discuss their appearance in nature. We then show that our bypass attachments give a necessary and sufficient set of moves relating any two Weinstein domains which become almost symplectomorphic after one stabilization.  Finally, we use our construction to produce several examples of interesting convex hypersurfaces and recover an existence $h$-principle for Weinstein hypersurfaces.
\end{abstract}

\maketitle

\tableofcontents

\section{Introduction}\label{sec:introduction}

\subsection{Context} Convex hypersurface theory, originally due to Giroux \cite{giroux1991convexite}, is a powerful tool in contact topology. It has been especially useful in dimension $3$.  For example, Giroux \cite{giroux2000structures} used it to recover the classification results of Bennequin \cite{bennequin1983entrelacements} and Eliashberg \cite{eliashberg1992contact}, while he \cite{giroux2000structures} and others like Etnyre \cite{etnyre2000tight}, Honda \cite{honda2000classification,honda2000classification2}, and Kanda \cite{kanda1997classification} obtained further classifications of tight contact structures. Shortly after, Etnyre-Honda \cite{etnyre2001knots,etnyre2005cabling} used convex surface theory to produce classifications of Legendrian and transverse knots, introducing techniques that have since been a fixture of the literature (e.g. \cite{tosun2012legendrian,etnyre2018satellites,dalton2024legendrian}). Convex surface theory in dimension $3$ is fundamentally combinatorial, and discrete changes are measured by \textit{bypass attachments}. 

Although Giroux introduced convex hypersurface theory in all dimensions, its utility beyond dimension $3$ has not been clear until recently. The work of Honda-Huang in \cite{HH18, honda2019convex} (see also \cite{sackel2019handle, eliashberg2022hondahuangs}) established necessary technical foundations: a $C^0$-genericity result, a $1$-parametric genericity result, and the concept of a bypass attachment. Using this technology, they recovered various theoretical results, such as existence of supporting (Weinstein) open book decompositions. Since then, additional theoretical advancements have been made. For example, together with the first author in \cite{BHH23}, Honda-Huang established the Giroux correspondence between contact structures and supporting Weinstein open book decompositions, more or less extending the $3$-dimensional convex methods of Giroux \cite{Giroux2002GeometrieDC} to all dimensions. 

Despite being demonstrably theoretically powerful, the practical use of convex hypersurface theory in higher dimensional contact topology is in its infancy. The goal is to eventually use it to study classification problems as in dimension $3$, but there remain technical obstacles to address before this happens --- for instance, lack of a "Giroux criterion" for tightness. Additionally, we need a better understanding of the combinatorics of bypass attachments in order to use them effectively.  

The broad goal of this paper is to improve the utility of the bypass attachment as a practical tool in all dimensions. Specifically, we: 
\begin{enumerate}
    \item Describe a set of bypass attachments that perform local, combinatorial changes to embedded Weinstein hypersurfaces in dimension at least $5$.
    \item Describe the analogous bypass moves in dimension $3$, and identify numerous phenomena they witness.

    \item Use the bypass moves in dimension at least $5$ to characterize Weinstein domains up to almost symplectomorphism after one stabilization,\footnote{There are unfortunately at least three distinct meanings of the word \emph{stabilization} in our paper. Here, we mean \emph{dimensional} stabilization of a Weinstein domain, i.e., passage from $W$ to $W\times D^2$. Later, we invoke both \emph{Legendrian submanifold stabilization} and \emph{open book decomposition stabilization}.} and construct from this characterization a range of interesting convex decompositions of hypersurfaces in contact manifolds. We additionally recover an $h$-principle for embeddings of Weinstein hypersurfaces.
\end{enumerate}
With the necessary background in \cref{sec:preliminaries} and the $3$-dimensional context in \cref{sec:appearance_dim_3}, we hope that the reader familiar with $3$-dimensional contact topology may transition more easily to applying similar ideas in dimension $5$ and beyond.

\subsection{Main results}

In order to state our main results, we need some language from convex hypersurface theory. Here we are brief, and refer to \cref{sec:preliminaries} for a more precise review. We assume the reader is familiar with Weinstein topology, deferring to \cite{eliashberg2018weinsteinrevisited,cieliebak2012stein} for a survey and the standard reference, respectively.

A closed, embedded, oriented hypersurface $\Sigma^{2n} \subset (M^{2n+1}, \xi)$ in a contact manifold is \textit{convex} if it admits an everywhere-transverse contact vector field. For a suitably generic subclass of \textit{Weinstein convex hypersurfaces} there is a decomposition $\Sigma = R_+ \cup \Gamma \cup R_-$, where $\Gamma^{2n-1}$ is a codimension-$1$ separating hypersurface in $\Sigma$ and $\bar{R}_{\pm}$ is naturally a Weinstein domain. An example of a Weinstein convex hypersurface is the (rounded) boundary of a \textit{contact handlebody} $(H=W\times [-1,1],\, \xi = \ker(dt + \lambda))$, which is a compact contactization of a Weinstein domain $(W^{2n}, \lambda, \phi)$. We will often write $W\times [-1,1]$ or simply $H$ when referring to a contact handlebody when the contact structure is implicit. In this case, both $\bar{R}_+$ and $\bar{R}_-$ are deformation equivalent to $W$. Contact handlebodies are standard neighborhoods of Weinstein hypersurfaces,\footnote{A second note on potentially confusing language: \textit{convex hypersurfaces} are always closed, while \textit{Weinstein hypersurfaces} $W\hookrightarrow M$ have nonempty boundary and inherit from $\xi$ the structure of a Weinstein domain.} so they generalize ribbon neighborhoods of Legendrian graphs in dimension $3$. Importantly, a contact handlebody is built out of \textit{contact handles} of index $0, \dots, n$, which are contactizations of the underlying Weinstein handles in $(W^{2n}, \lambda, \phi)$.

By \cite{honda2019convex}, a generic $1$-parameter family of hypersurfaces is Weinstein convex except at a finite set of times. These exceptional times may alter the convex decomposition $R_+ \cup \Gamma \cup R_-$ of the hypersurface, and the change is described by a \textit{bypass attachment}. Abstractly, a bypass attachment is a certain smoothly canceling pair of contact handles of index $n$ and $n+1$. Though the handles cancel smoothly, they may not cancel at the level of contact structures. 

This paper is concerned with the effect of attaching an abstract bypass to a contact handlebody $H = W \times [-1,1]$. As a bypass involves a contact ($n+1$)-handle, the result of attaching a bypass to a contact handlebody is not necessarily contactomorphic to another contact handlebody; for instance, the contact structure on a contact handlebody is always tight, but there exist bypass attachments that render a contact structure overtwisted \cite[\S 10]{HH18}. 

\subsubsection{Bypass moves} 

Our main theorem describes bypass attachments to a contact handlebody $W\times [-1,1]$ that produce a different contact handlebody $W'\times [-1,1]$ via local changes to the handle structure of the underlying Weinstein domain. When $2n+1=5$, these moves are relative versions of the diagrammatic moves introduced by Ding-Geiges-van Koert \cite{Ding2012Diagrams}; see also \cref{prop:crossing_change}.

\begin{figure}[ht]
	\centering
	\begin{overpic}[scale=.55]{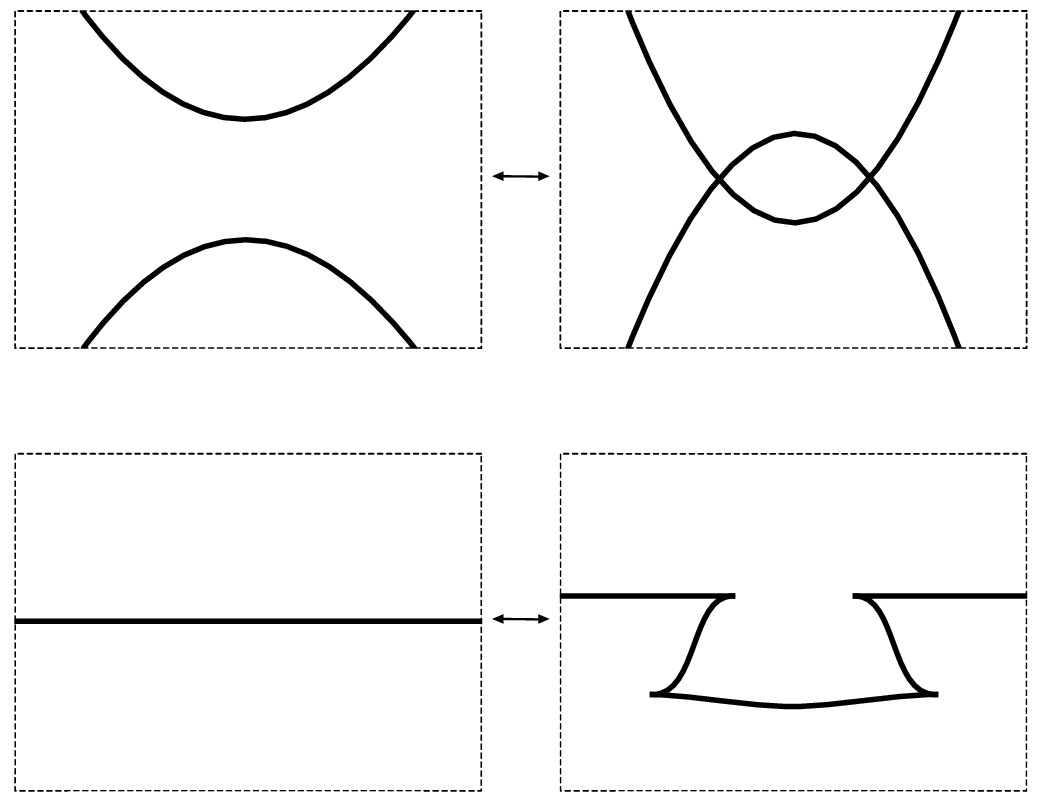}
    \put(37,21){\scriptsize $(-1)$}
    \put(89,23){\scriptsize $(-1)$}
    \put(37,70){\scriptsize $(-1)$}
    \put(37,50){\scriptsize $(-1)$}
    \put(90,68){\scriptsize $(-1)$}
    \put(90,52){\scriptsize $(-1)$}
	\end{overpic}
	\caption{The (un)clasping move in the first row and (de)stabilizing bypass move in the second row of \cref{thm:main_moves}. All diagrams are $S^{n-2}$-spun around the central vertical axis and represent front projections of local surgery diagrams for $2n$-dimensional Weinstein domains.}
	\label{fig:thm_moves}
\end{figure}

\begin{theorem}\label{thm:main_moves}
Let $(W,\lambda, \phi)$ be a Weinstein domain of dimension $2n\geq 4$ and let $H = W \times [-1,1]$ be the abstract contact handlebody based over $W$. Suppose that in a Legendrian Kirby diagram for $W$ there is a chart with front projection given by any of the diagrams in \cref{fig:thm_moves}. There is an abstract bypass attachment $B$ such that $H\cup B$ is contactomorphic relative to the boundary to a contact handlebody $H'=W' \times [-1,1]$, where the diagram for $W'$ differs from that of $W$ according to the adjacent diagram in \cref{fig:thm_moves}. Moreover, when $2n\geq 6$, the (de)stabilizing move preserves the smooth isotopy type of of the underlying hypersurface.
\end{theorem}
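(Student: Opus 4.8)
The plan is to construct the bypass $B$ explicitly for each move and then to exhibit, by contact handle slides followed by a single handle cancellation, a contactization structure $W'\times[-1,1]$ on $H\cup B$ whose Legendrian diagram differs from that of $W$ by exactly the local modification of \cref{fig:thm_moves}. The first simplification is that every diagram in \cref{fig:thm_moves} is $S^{n-2}$-spun about the central axis, so I would build $B$ equivariantly and reduce the whole analysis to its $2$-dimensional front slice --- a curve in a half-plane of the type appearing in the Ding--Geiges--van Koert calculus \cite{Ding2012Diagrams}. In that slice the task is to identify a canceling pair $h_n\cup h_{n+1}$ of contact handles realizing the prescribed relative move; I would do this by writing down standard Darboux models for the contact $n$- and $(n+1)$-handles with attaching data supported in the chart of \cref{fig:thm_moves}, and then spinning.

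The core of the argument is the re-decomposition. Recall that $H=W\times[-1,1]$ carries contact handles only of index $0,\dots,n$, one for each Weinstein handle of $W$, whereas $H\cup B$ has acquired an additional index-$(n+1)$ handle from the bypass. To recognize $H\cup B$ as a genuine contactization $W'\times[-1,1]$ I must eliminate this $(n+1)$-handle. The plan is to slide its attaching sphere across the index-$n$ handles of $H$ --- reshaping their attaching Legendrians to produce the clasp (respectively the front zigzag) --- and then cancel it against a single index-$n$ handle, leaving a handle decomposition of index $\le n$ that is visibly the contactization of the modified domain $W'$. The main obstacle lives precisely here: a smoothly canceling index-$(n+1)$/index-$n$ pair need not cancel contact-geometrically, and a careless choice leaves behind an overtwisted or otherwise non-contactizable remnant, as in \cite[\S 10]{HH18}. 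The content of the theorem is that for these particular bypasses the cancellation can be performed at the contact level; I would certify this by keeping the attaching and belt spheres Legendrian-isotopic to the standard canceling configuration throughout the slides, so that the cancellation is realized by an ambient contact isotopy and the resulting identification $H\cup B\cong W'\times[-1,1]$ is a contactomorphism relative to $\partial H$. This is where the explicit spun model does the real work.

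Finally, for the smooth refinement when $2n\ge 6$, I would compare attaching data directly. Smoothly, the convex hypersurface $\Sigma=\partial H$ is the double $DW$, so it suffices to show that the (de)stabilization alters neither the smooth isotopy class of the relevant attaching sphere nor the smoothly essential part of its framing. The front zigzag is a smoothly trivial local kink, and its spun version is undone by a smooth isotopy of the attaching sphere $S^{n-1}$ exactly when $n-1\ge 2$; a short computation in $\pi_{n-1}(SO(n))$ then shows that the framing shift produced by a single stabilization is smoothly inessential once $n\ge 3$. Hence $W$ and $W'$ are diffeomorphic through a modification supported in the chart, giving a smooth isotopy $DW\cong DW'$ of hypersurfaces. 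This is exactly the step that breaks when $2n=4$, where stabilization changes $tb$ and therefore the smooth $2$-handle framing, consistent with the hypothesis $2n\ge 6$.
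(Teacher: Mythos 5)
Your overall architecture for the main statement matches the paper's (its \cref{prop:clasping_bypass} and \cref{prop:stabilizing_bypass}): attach an explicitly constructed, spin-symmetric bypass to $H$ and then recognize $H\cup B$ as a contactization by canceling the new contact $(n+1)$-handle against one of the original contact $n$-handles. But the step you flag as "where the explicit spun model does the real work" is exactly where your proposal has a genuine gap. Your certification mechanism --- keeping "the attaching and belt spheres Legendrian-isotopic to the standard canceling configuration" so that cancellation is "realized by an ambient contact isotopy" --- is not well-formed for contact $(n+1)$-handles: their attaching datum is not a Legendrian sphere but a pair of Lagrangian disks $D_\pm\subset R_\pm$ meeting along a Legendrian equator, and the contact-level cancellation criterion constrains this disk data, not the Legendrian isotopy class of the equator. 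The paper chooses the bypass data so that $D_+$ is a Reeb pushoff of the cocore of the $n$-handle to be cancelled and $D_-$ is a standard disk, and then invokes \cref{lemma:trivial_bypass_lemma} (from \cite{BHH23}) together with the invariance statement in \cref{theorem:bypass_attachment} (from \cite{HH18}): such a pair is a \emph{trivial bypass}, and trivial bypasses yield vertically invariant cobordisms. That trivial bypasses cancel is a hard theorem, not something that follows from a Legendrian isotopy argument; asserting it for a "standard configuration" is assuming precisely what must be proved. (A smaller point: the destabilizing direction is not obtained in the paper by directly writing down bypass data --- spin symmetry obstructs the naive picture --- but by an auxiliary birth of a canceling Weinstein handle pair, handleslides, and an application of the \emph{unclasping} bypass; see \cref{fig:destab}. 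Your proposal treats the two directions as symmetric.)

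The "moreover" clause for $2n\geq 6$ is where your route genuinely diverges and falls short. The theorem asserts that the \emph{embedded} hypersurface in $H\cup B$ is preserved up to ambient smooth isotopy; you argue only that $W$ and $W'$ are abstractly diffeomorphic and conclude an isotopy of doubles, which is a weaker statement and does not produce the ambient isotopy. The paper instead builds the ambient isotopy by hand: the bypass half-disk (a co-Legendrian in the sense of Huang) provides a smooth isotopy rel boundary from the core of the handle attached along the stabilized sphere to $D_-\uplus_b D_+$, which is isotopic to $D_+$ because $D_-$ is unknotted, hence to the core of the original handle; separately, the framed attaching spheres are framed isotopic because loosification preserves formal Legendrian data \cite{murphy2012loosev2}, and the framing is part of that data. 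Your framing argument --- that "a short computation in $\pi_{n-1}(SO(n))$ shows the framing shift produced by a single stabilization is smoothly inessential once $n\geq 3$" --- is unsupported as stated: these groups are generally nonzero for $n\geq 3$, and the paper's own computation in the proof of \cref{thm:moves_sufficient_for_stable_equivalence} relates the stabilization move to the generator of $\ker\bigl(\pi_{n-1}SO(n)\to\pi_{n-1}SO(n+1)\bigr)$, which is nontrivial for most $n$. What makes the higher-dimensional case work is not a homotopy-group vanishing but Murphy's formal-data preservation, which has no analogue in dimension $4$ --- this, rather than a $\pi_{n-1}SO(n)$ calculation, is the correct explanation of why the statement fails when $2n=4$.
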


\begin{remark}
When $2n\geq 6$, the stabilizing move in \cref{thm:main_moves} "loosifies" the corresponding attaching sphere, and may consequently be used to "flexibilize" a Weinstein hypersurface. This has the additional effect of "loosifying" the embedding of the hypersurface itself. See \cref{subsec:flex} for additional discussion. 
 
\end{remark}

\cref{thm:main_moves} is a statement about bypasses attached to the exterior of an abstract contact handlebody; given a contact handlebody embedded inside a contact manifold, the theorem does not guarantee the existence of such exterior bypass attachments. However, by taking $H' = H \cup B$ to be the embedded handlebody, it does give the existence of \textit{interior} bypass attachments. For example, this produces interesting convex spheres in a Darboux ball. 

\begin{corollary}\label{cor:darboux_spheres}
Let $(D^{2n+1}, \xi_{\mathrm{std}})$ be a Darboux ball and $(W^{2n}, \lambda, \phi)$ a contractible Weinstein domain. There is a convex sphere $S^{2n}\subset D^{2n+1}$ such that $R_{\pm}(S^{2n})$ is deformation equivalent to $W$. 
\end{corollary}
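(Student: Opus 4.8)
The plan is to build $S^{2n}$ by starting from the boundary of a small, standard contact handlebody and successively deforming it via the \emph{interior} bypass attachments furnished by \cref{thm:main_moves}. Concretely, embed the trivial contact handlebody $H_0 = D^{2n}\times[-1,1]$ as a small rounded ball in $(D^{2n+1},\xi_{\mathrm{std}})$; its boundary is the standard convex sphere $S_0 \cong S^{2n}$ with $R_\pm(S_0)$ a Weinstein ball $D^{2n}$. I then want to modify $H_0$ so that the underlying Weinstein domain becomes $W$, keeping the handlebody embedded in the Darboux ball and its boundary convex. Each time \cref{thm:main_moves} produces a bypass $B$ realizing one of the moves of \cref{fig:thm_moves}, I take the enlarged handlebody $H\cup B = W'\times[-1,1]$ itself to be the new embedded handlebody, as in the discussion preceding this corollary; this is an interior bypass, and it stays inside the Darboux ball since we began with a small $H_0$ and each attachment only adds a canceling pair of contact handles.

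The content is therefore to produce a finite sequence of the moves in \cref{fig:thm_moves} carrying the Weinstein ball $D^{2n}$ to $W$. Here I would invoke the characterization of Weinstein domains by bypass moves (the third of our main results): it suffices to check that $W$ and $D^{2n}$ become almost symplectomorphic after one dimensional stabilization. Since $W$ is contractible, so is $W\times D^2$, and a short Mayer--Vietoris and van Kampen computation shows that $\partial W$ is a homology sphere and that $\partial(W\times D^2)$ is a \emph{simply connected} homology $(2n+1)$-sphere, hence a homotopy sphere. For $2n\geq 4$ the $h$-cobordism theorem then identifies $W\times D^2$ with the standard disk $D^{2n+2} = D^{2n}\times D^2$, and since any two almost complex structures on a contractible manifold are homotopic, this diffeomorphism is an almost symplectomorphism. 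Thus $W$ and $D^{2n}$ satisfy the hypothesis of the characterization and are related by the moves of \cref{thm:main_moves}.

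Running this sequence of moves as interior bypasses starting from $S_0$ produces an embedded contact handlebody $H = W\times[-1,1]$ in the Darboux ball whose convex boundary $\Sigma$ satisfies $R_\pm(\Sigma)\simeq W$ by construction, since $R_\pm$ of a contact handlebody recovers its underlying Weinstein domain. It remains to see that $\Sigma$ is a sphere. Again using contractibility, $W\times[-1,1]$ is a compact contractible $(2n+1)$-manifold with simply connected boundary, so for $2n\geq 6$ the $h$-cobordism theorem gives $W\times[-1,1]\cong D^{2n+1}$ and hence $\Sigma\cong S^{2n}$ as smooth manifolds, completing the construction.

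The main obstacle is the middle step: translating the hypothesis ``$W$ contractible'' into an actual sequence of the diagrammatic moves of \cref{fig:thm_moves}. This is exactly where the stabilized almost-symplectomorphism $W\times D^2\cong D^{2n+2}$ and the characterization theorem do the real work; without them one would have to exhibit the moves by hand from a handle decomposition of $W$, which is delicate because Weinstein handles are constrained to index $\leq n$ and the clasping moves need not preserve the smooth isotopy type of the intermediate hypersurfaces. A secondary subtlety is that the boundary of $W\times[-1,1]$ is a priori only a homotopy sphere; identifying it with the standard $S^{2n}$ requires the $h$-cobordism theorem and so pins the clean statement to $2n\geq 6$, with the $2n=4$ case yielding at worst a homotopy $4$-sphere.
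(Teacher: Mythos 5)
Your reduction to \cref{thm:moves_sufficient_for_stable_equivalence} is the right idea and matches the paper's actual route (the paper deduces this corollary from \cref{cor:subcritically-fillable}): contractibility of $W$ gives, via the $h$-cobordism theorem in dimension $2n+2\geq 6$ and the fact that nondegenerate $2$-forms on a contractible manifold are homotopic once orientations agree, that $W\times D^2$ is almost symplectomorphic to $D^{2n}\times D^2$, hence $W$ and $D^{2n}$ are related by a finite sequence of the moves of \cref{fig:thm_moves}. The gap is in how you realize those moves inside the Darboux ball. You start with the small embedded handlebody $H_0=D^{2n}\times[-1,1]$ and attach the bypasses of \cref{thm:main_moves} to its \emph{exterior}, declaring $H\cup B$ to be ``the new embedded handlebody.'' But \cref{thm:main_moves} produces \emph{abstract} bypass attachments; it does not guarantee that the required bypass exists in the ambient contact manifold outside an already-embedded handlebody --- this is exactly the caveat in the paragraph preceding the corollary (``given a contact handlebody embedded inside a contact manifold, the theorem does not guarantee the existence of such exterior bypass attachments''). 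Your justification that the attachment ``stays inside the Darboux ball since we began with a small $H_0$'' addresses the smooth room available, not the contact-geometric existence of the bypass data in the complement, so this step is unsupported; you have also inverted the paper's ``interior bypass'' remark, which presumes the \emph{larger} handlebody is the embedded one. The fix is to run the moves in the opposite direction: since the moves of \cref{fig:thm_moves} come in inverse pairs, choose the sequence carrying $W$ to $D^{2n}$, so that $H(W)\cup B_1\cup\cdots\cup B_k\cong H_0$. This exhibits $H(W)$ embedded \emph{inside} the standard ball $H_0\subset D^{2n+1}$, with complement a bypass cobordism; every bypass is then interior to something already embedded, which is precisely the argument of \cref{cor:subcritically-fillable}.

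A secondary defect: identifying the resulting hypersurface with $S^{2n}$ by applying the $h$-cobordism theorem to $W\times[-1,1]$ needlessly restricts you to $2n\geq 6$ and leaves only a homotopy $4$-sphere when $2n=4$ --- but the $4$-dimensional case (Weinstein Mazur manifolds, exotic contractible pairs) is the paper's headline application of this corollary. Once the moves are run in the correct direction this issue evaporates: a bypass attachment is a smoothly canceling pair of handles, so the bypass cobordism between $\partial H(W)$ and $\partial H_0$ is smoothly a product, and $\partial H(W)$ is smoothly isotopic to $\partial H_0\cong S^{2n}$ in every dimension $2n\geq 4$, with no $h$-cobordism argument and no dimension restriction beyond that of \cref{thm:moves_sufficient_for_stable_equivalence}.
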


\cref{cor:darboux_spheres} applies, for instance, when $W$ is a $4$-dimensional Weinstein Mazur manifold \cite{mazur1961note,akbulut2019knot,hayden2021exotic} or an exotic Weinstein structure on $D^{2n}$ for $2n\geq 6$ \cite{seidel2004ramanujam,mclean2008symplectic}. See \cref{cor:subcritically-fillable} below, which implies \cref{cor:darboux_spheres}.

\subsubsection{Appearance of bypass moves in dimension $3$} Suspecting that many readers are more familiar with contact topology in low dimensions than high, we establish evidence for the utility of the bypass moves in \cref{thm:main_moves} by identifying the analogous bypass attachments in dimension $3$. Interpreted correctly, the (un)clasping bypass move is ubiquitous.

\begin{theorem}\label{thm:main_dim3}
Let $W$ be an abstract oriented surface with nonempty boundary, viewed in disk-band form, and let $H= W\times [-1,1]$ be the contact handlebody based over $W$. Let $W'$ be the surface obtained by swapping the attaching regions of two adjacent feet as in \cref{fig:3dclasp}.  There is a bypass attachment $B$ such that $H \cup B$ is contactomorphic relative to the boundary to the contact handlebody $H'=W'\times [-1,1]$. Moreover, $B$ witnesses: 
\begin{enumerate}
    \item Modification of the slope of dividing curves on a standard convex torus via a basic slice. \label{part:slope}
    \item Destabilization of a Legendrian arc.\label{part:destab_arc}
    \item Permutation of the cyclic order of edges near a vertex in a Legendrian graph.\label{part:graph}
\end{enumerate}
\end{theorem}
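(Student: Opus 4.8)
The plan is to treat the three-dimensional statement as the $n=1$ shadow of the construction behind \cref{thm:main_moves}, verified by hand since the hypothesis $2n\geq 4$ of that theorem excludes the case at issue. First I would fix the dictionary between the objects in the statement and classical convex surface theory. A contact handlebody $H = W\times[-1,1]$ over an oriented surface $W$ is precisely a standard ribbon neighborhood of a Legendrian graph $G\subset (M^3,\xi)$: the disk-band form of $W$ records $G$, with $0$-handles (disks) thickening the vertices and $1$-handles (bands) thickening the edges, and the cyclic order of the band-feet around a disk encodes the ribbon (contact-framing) data at the corresponding vertex. After rounding corners, $\partial H$ is a convex surface whose dividing set is $\Gamma = \partial W\times\{0\}$. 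In this language, swapping two adjacent feet is a purely local rearrangement of $\partial W$, and $W'$ is again in disk-band form with the same disks and bands but altered incidence along one boundary circle; this is the $n=1$ analogue of the (un)clasping crossing change of \cref{prop:crossing_change}.

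Next I would construct the bypass $B$ explicitly and certify the rel-boundary contactomorphism. An abstract bypass is a smoothly canceling pair of contact handles of index $n$ and $n+1$, which for $n=1$ is a canceling $1$/$2$-handle pair; equivalently, by \cite{HH18,honda2019convex}, it is attached along a Legendrian arc of attachment meeting $\Gamma$ in three points in the standard configuration. I would choose this arc to run from one foot across to its neighbor, so that the index-$1$ handle temporarily bridges the two band-feet while the index-$2$ handle reroutes and cancels it. Because the spinning $S^{n-2}=S^{-1}$ of \cref{fig:thm_moves} is empty when $n=1$, no higher-dimensional framing data intervenes, and the net effect on the ribbon is precisely the transposition of the two feet depicted in \cref{fig:3dclasp}.

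I would then verify that $H\cup B \cong W'\times[-1,1]$ relative to the boundary. The cleanest route is to track the dividing set: a single bypass modifies $\Gamma$ in the standard, completely understood way, and I would check that the resulting dividing curve on $\partial(H\cup B)$ coincides with $\partial W'\times\{0\}$. It then remains to confirm that $H\cup B$ is still a \emph{tight} contact handlebody, i.e. that it decomposes into contact handles of index $0$ and $1$ realizing the ribbon of $W'$; granting this, the rel-boundary contactomorphism to $H'=W'\times[-1,1]$ follows from the uniqueness of standard neighborhoods of ribbon surfaces (equivalently, from the classification of tight contact structures on a surface-cross-interval with prescribed convex boundary data, together with an explicit Liouville isotopy carrying the swapped ribbon structure onto $W'$).

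Finally I would realize the three witnessed phenomena by specializing $W$. For part~(\ref{part:slope}), take $W$ an annulus, so $\partial H$ is a convex torus with two parallel dividing curves; the foot-swap is attached along an arc crossing both, and I would match the resulting change of $\Gamma$ with Honda's basic slices to read off the slope change. For part~(\ref{part:destab_arc}), take a disk-band configuration in which the swap cancels a front zigzag along one band, destabilizing the underlying Legendrian edge. For part~(\ref{part:graph}), take $W$ to be the ribbon of a graph with a multivalent vertex, where the foot-swap literally transposes two cyclically adjacent edges. The main obstacle I anticipate is not the smooth bookkeeping but the contact-geometric verification of the previous paragraph—namely, arranging the bypass so that $H\cup B$ stays tight and produces exactly $\partial W'$ rather than a slope-shifted or overtwisted neighbor—and, in part~(\ref{part:slope}), pinning down which basic slice arises, which requires comparing the sign and orientation of the bypass arc against Honda's classification.
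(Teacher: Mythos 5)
Your setup and choice of bypass data essentially coincide with the paper's: the paper takes $D_-\subset R_-$ a small semicircle centered on one foot and $D_+\subset R_+$ a semicircle on the adjacent foot meeting $D_-$ in one point, with $D_+$ becoming a push-off of the band's co-core after a slide --- exactly your "arc running from one foot to its neighbor." The gap is in the central step. You reduce the claim $H\cup B\cong H'$ rel boundary to (i) the dividing set of $\partial(H\cup B)$ matching $\partial W'\times\{0\}$, (ii) tightness of $H\cup B$, and (iii) a uniqueness/classification theorem for tight structures on the handlebody with that boundary data; you then explicitly grant (ii) and name it as the main obstacle without resolving it. This is not a minor deferral: bypass attachments can in general destroy tightness, and the only natural way to certify tightness of $H\cup B$ here is to exhibit it as a contact handlebody --- which is precisely the statement being proven, so the proposed route is close to circular. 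Moreover, even granting tightness, the uniqueness input you would need (a Torisu-type statement that a handlebody with this convex boundary data carries a unique tight structure rel boundary) is never pinned down.

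The paper avoids all of this with a purely handle-theoretic cancellation, which is the key idea missing from your proposal. Attaching $B$ means attaching a contact $1$-handle along $\partial(D_+^{-\ve}\cup D_-^{\ve})$ and a contact $2$-handle, as in \cref{theorem:bypass_attachment}. Because $D_+$ was chosen as a positive Reeb push-off of the co-core of the \emph{pre-existing} band, the new contact $2$-handle pairs with that original band's contact $1$-handle as a \emph{trivial} bypass in the sense of \cref{lemma:trivial_bypass_lemma}; trivial bypasses yield vertically invariant product cobordisms, so this pair cancels, leaving on the nose the contact handlebody over $W'$ --- no tightness argument or classification is invoked. The paper also splits the proof into the case where the two feet lie on different bands and the case where they lie on the same band (\cref{fig:3dclaspproof}); the latter self-clasp case, in which the abstract surface $W'$ equals $W$ yet $B$ is a nontrivial bypass, is exactly what is needed for part (1), and your proposal does not distinguish it. Your sketches of the three applications are otherwise in the same spirit as the paper's: the annulus/fundamental-square comparison with Honda's basic slices for (1), and Lagrangian-projection identifications of the clasp for (2) and (3), though for (3) the paper must additionally verify that the bypass exists ambiently (the relevant attaching curve is a maximal-$\mathrm{tb}$ unknot bounding a disk in the complement), a point your specialization also omits.
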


\begin{figure}[ht]
	\centering
    \vspace{-1cm}
	\begin{overpic}[scale=.32]{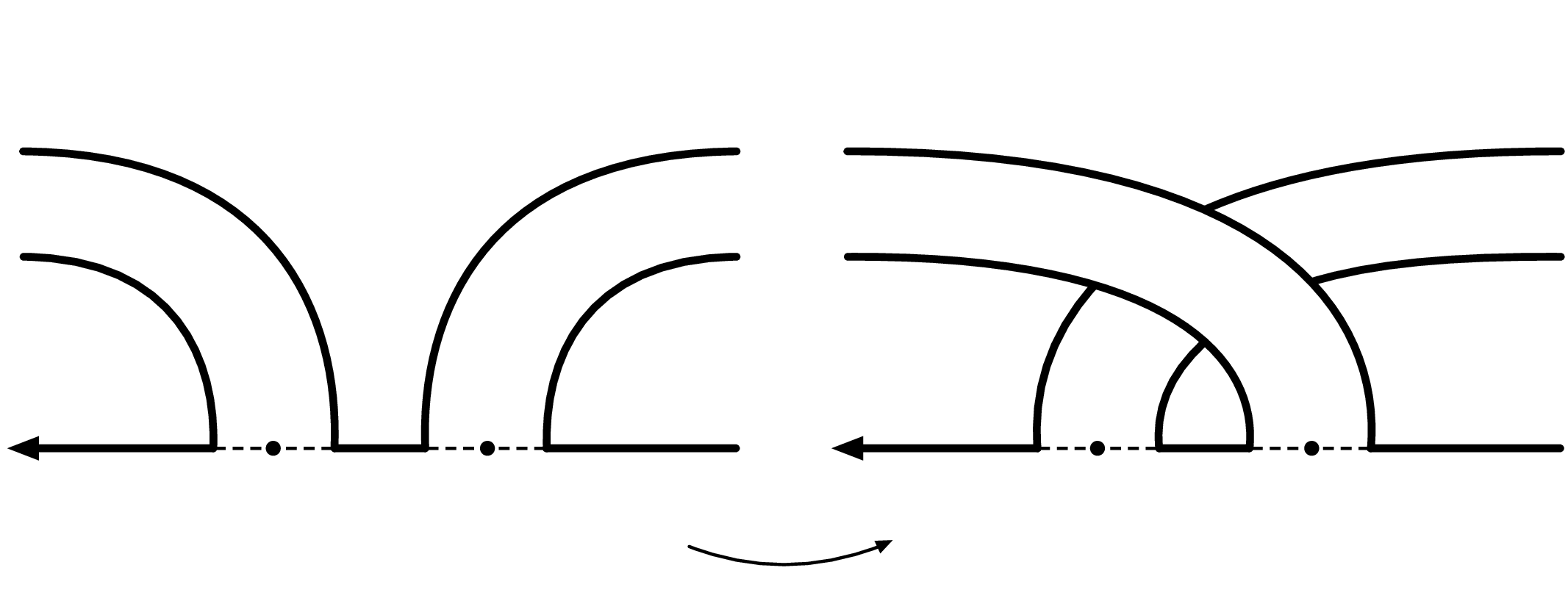}
    \put(22,5){\small $W$}
    \put(75,5){\small $W'$}
	\end{overpic}
    \vspace{-0.3cm}
	\caption{The $3$-dimensional clasp move.}
	\label{fig:3dclasp}
\end{figure}

The clasp move gives new perspectives on various contact geometric phenomena. For example, we can use it to control the convex decomposition of a Heegaard surface in a contact $3$-manifold. The following corollary describes two extreme cases, where we minimize and maximize the genus of positive and negative regions. 

\begin{restatable}{corollary}{heegaard}\label{cor:heegaard}
Let $(Y, \xi)$ be a closed contact $3$-manifold. Let $\Sigma\subset Y$ be a smooth Heegaard surface. 
\begin{enumerate}
    \item The surface $\Sigma$ is smoothly isotopic to a convex Heegaard surface $\Sigma'$ such that $R_+(\Sigma')$ and $R_-(\Sigma')$ are planar.\label{part:heegaard_planar}
    \item If $g(\Sigma)\neq 1$, then $\Sigma$ is also smoothly isotopic to a convex Heegaard surface $\Sigma''$ with connected dividing set $\Gamma_{\Sigma''}$.\label{part:heegaard_connected}
\end{enumerate}
\end{restatable}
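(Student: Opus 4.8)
The plan is to realize $\Sigma$ as the convex boundary of the contact neighborhood of a Legendrian spine and then to read off, and manipulate, its dividing set through the clasp move of \cref{thm:main_dim3}. Write $Y = H_- \cup_\Sigma H_+$ for the Heegaard splitting and choose a spine $G$ of $H_+$ — for concreteness a wedge of $g$ circles — realized as a Legendrian graph. Its standard contact neighborhood is a contact handlebody $H = W \times [-1,1]$ whose ribbon $W$ is assembled from a single disk and $g$ bands, and whose convex boundary is smoothly isotopic to $\Sigma$, since regular neighborhoods of smoothly isotopic spines are smoothly isotopic. Here $R_\pm \cong W$, the cyclic order of the $2g$ band-feet is recorded by a chord diagram, and the topology of $W$ obeys $2h + b = g+1$, where $h$ is the genus of $W$ and $b = |\Gamma|$ the number of its boundary components. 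The essential point is that swapping two adjacent feet can be realized by a smooth isotopy of the spine — there is room to permute edges about a vertex in three dimensions — so two such Legendrian representatives have smoothly isotopic neighborhoods; \cref{thm:main_dim3} identifies the resulting change of convex structure as a clasp bypass and confirms that the dividing set transforms exactly as the reordered ribbon predicts. Consequently any reordering of the feet yields a convex surface still smoothly isotopic to $\Sigma$ but with an explicitly computable dividing set.

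For \cref{part:heegaard_planar} I would use the clasp move to bring the chord diagram into non-crossing position, for instance the order $a_1 b_1 a_2 b_2 \cdots a_g b_g$ in which each band's feet are adjacent. Adjacent transpositions generate all reorderings, so this is always attainable, and a non-crossing chord diagram has a planar ribbon, with $h = 0$ and $b = g+1$. The resulting convex surface $\Sigma'$ is smoothly isotopic to $\Sigma$ and satisfies $R_\pm(\Sigma') \cong W'$ with $W'$ planar, as desired.

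For \cref{part:heegaard_connected} I would instead reorder the feet so as to maximize the genus of the ribbon, equivalently to minimize $b$. Because $b \equiv g+1 \pmod 2$ is forced, the clasp move alone reaches $b = 1$ when $g$ is even, producing a connected dividing set with $R_\pm$ of genus $g/2$, but only $b = 2$ when $g$ is odd. In the odd case I would finish with a single further, asymmetric bypass that merges the two remaining dividing curves $\gamma_1, \gamma_2$ along an arc joining them; this amalgamates them into one curve while raising the genus of one region by one, leaving $R_+$ of genus $(g-1)/2$ and $R_-$ of genus $(g+1)/2$. This is precisely where the hypothesis $g \neq 1$ is used: reaching a single dividing curve requires $h_+ + h_- = g$, and for $g = 1$ this forces a disk region, hence a contractible dividing curve incompatible with a convex surface in a tight neighborhood; for every other $g$ both regions can be given positive genus (or, when $g=0$, are the two hemispheres of a standard convex $S^2$), and no such obstruction arises.

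The main obstacle is the existence of the final merging bypass in the odd case, since an asymmetric decomposition with $R_+ \not\cong R_-$ cannot arise as the boundary of a contact handlebody and so lies outside the reach of \cref{thm:main_dim3}; moreover, as observed after \cref{thm:main_moves}, an abstract bypass need not be realized by an exterior bypass in $Y$. I would produce it inside the complementary handlebody $H_-$: take a compressing disk $D \subset H_-$ with $\partial D \subset \Sigma$ positioned to cross $\gamma_1$ and $\gamma_2$, make $\partial D$ Legendrian and $D$ convex, and invoke the Imbalance Principle to extract a bypass supported in $H_-$. Attaching it is an ambient isotopy of $\Sigma$, so it preserves the smooth Heegaard isotopy class while connecting the dividing set; the technical heart of the argument is arranging $\partial D$ so that the resulting bypass effects precisely the merger of $\gamma_1$ and $\gamma_2$ rather than some other reconfiguration of $\Gamma$.
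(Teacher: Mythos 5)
For \eqref{part:heegaard_planar}, and for \eqref{part:heegaard_connected} in even genus, your argument is essentially the paper's: take a Legendrian spine of one handlebody, view its ribbon $W_0$ as a disk with $g$ bands, and use the clasp move of \cref{thm:main_dim3} to rearrange the chord diagram of the feet --- to the non-crossing (planar) diagram for \eqref{part:heegaard_planar}, and to a genus-maximizing diagram for \eqref{part:heegaard_connected}. The paper packages the conclusion slightly differently (the rel-boundary contactomorphism $H'\cup B_1\cup\cdots\cup B_n\cong H_0$ yields a contact embedding $H'\hookrightarrow H_0\subset Y$ whose complement is a bypass cobordism, hence a product, so $\partial H'$ is convex and smoothly isotopic to $\Sigma$), but this is the same mechanism as your ``reorder the spine and apply the clasp move.''

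Where you diverge is the odd-genus case of \eqref{part:heegaard_connected}, and here your parity count is correct and important: since $2h+b=g+1$, a disk-with-$g$-bands ribbon has connected boundary only when $g$ is even; equivalently, the boundary of any contact handlebody has $R_+\cong R_-$, so a connected dividing set forces the doubled surface to have even genus. The paper's own proof does not address this --- it asks for an orientable $W''$ with connected boundary and $\chi(W'')=\chi(W_0)=1-g$, which simply does not exist when $g$ is odd --- so for odd $g\geq 3$ you have identified a genuine gap in the paper's argument, not merely a difficulty of your own making. (Your reading of the $g\neq 1$ hypothesis is also essentially right: connected $\Gamma$ on a torus means a contractible dividing curve, which Giroux's criterion forbids when $(Y,\xi)$ is tight, and the corollary quantifies over all closed contact $3$-manifolds.)

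However, your proposed repair for odd $g$ is not yet a proof, as you acknowledge. The Imbalance Principle applied to a convex compressing disk $D\subset H_-$ with Legendrian boundary produces a bypass along an \emph{outermost} dividing arc of $D$; you do not get to prescribe which bypass appears, it may well be trivial (in the sense of \cref{lemma:trivial_bypass_lemma}'s $3$-dimensional analogue), and even a nontrivial one need not be attached along an arc meeting $\gamma_1$ and $\gamma_2$ with the external connectivity required to fuse them into a single curve --- the same local move attached along an arc with different closure pattern leaves $|\Gamma|$ unchanged or increases it. Completing your argument requires (i) Legendrian realization to control $\partial D\cap\Gamma_{\Sigma}$, (ii) an argument excluding the trivial-bypass outcome, and (iii) a verification of the reconnection combinatorics showing the chosen bypass really merges the two curves. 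So as written, both your proof and the paper's are incomplete precisely at odd $g\geq 3$; the difference is that yours names the obstruction and points at the missing lemma, which is the honest state of affairs.
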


\subsubsection{Appearance of bypass moves in higher dimensions}
While \cref{thm:main_moves} is a result about contact handlebodies, by considering neighborhoods of Weinstein pages of open book decompositions, we are naturally led to statements about closed contact manifolds.  We consider the simplest case, where the monodromy of the open book is the identity map.

\begin{restatable}{theorem}{movesForObds}\label{thm:moves_for_obds}
Let $(W,\lambda, \phi)$ be a Weinstein domain of dimension $2n\geq 4$ and let $(W',\lambda', \phi')$ be obtained from $W$ by any of the local moves described in \cref{thm:main_moves}.  The contact manifolds $\mathcal{OB}(W,\mathrm{id})$ and $\mathcal{OB}(W',\mathrm{id})$ are contactomorphic.
\end{restatable}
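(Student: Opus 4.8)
The plan is to exploit the fact that a trivial-monodromy open book is a contact double. First I would record the standard identification $\mathcal{OB}(W, \mathrm{id}) = \partial(W \times D^2)$ and observe that splitting the $D^2$-factor across a diameter exhibits $\mathcal{OB}(W,\mathrm{id})$ as the double of the contact handlebody $H = W \times [-1,1]$ along its convex boundary $\Sigma = \partial H$. Concretely, $\mathcal{OB}(W, \mathrm{id}) = H_1 \cup C \cup H_2$, where $H_1, H_2$ are two copies of $H$ (one orientation-reversed), $C \cong \Sigma \times [-1,1]$ is an $\R$-invariant collar of the central convex hypersurface, and the gluing is the doubling identification. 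The cases $W = D^2$ and $W = $ annulus, giving $S^3$ and $S^1\times S^2$ as the doubles of $D^3$ and $D^2 \times S^1$, are reassuring sanity checks. A page-neighborhood is exactly $H_1$, and its exterior $C \cup H_2$ contains the full collar $C$.

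Next I would apply \cref{thm:main_moves} to $H_1$: since the hypothesized front-projection chart is visible in the Kirby diagram for the page $W$, there is an abstract bypass $B_1$, attached to $\Sigma$ from the exterior and hence supported in the collar $C$, with $H_1 \cup B_1$ contactomorphic relative to the boundary to $W' \times [-1,1]$. The reflection symmetry swapping $H_1 \leftrightarrow H_2$ (present precisely because the monodromy is the identity) carries this to a mirror bypass $B_2$ attached to $\partial H_2$, again supported in $C$, with $H_2 \cup B_2 \cong W' \times [-1,1]$ relative to the boundary. After rescaling the collar, $B_1$ and $B_2$ occupy disjoint sub-collars near the two boundary components of $C$.

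The key point is that a bypass attachment does not alter the ambient contact manifold --- it merely realizes an isotopy of the convex hypersurface to which it is attached. Thus after attaching the two disjoint bypasses the ambient manifold is still $\mathcal{OB}(W, \mathrm{id})$, but it is now presented as $(H_1 \cup B_1) \cup C' \cup (H_2 \cup B_2)$ with $C' = C \setminus (B_1 \cup B_2)$. By \cref{thm:main_moves} the two outer pieces are each contactomorphic to $W' \times [-1,1]$, and $C'$ is once again an invariant collar, this time of the new convex hypersurface $\Sigma' = \partial(W' \times [-1,1])$ with $R_{\pm}(\Sigma') \simeq W'$. This exhibits $\mathcal{OB}(W, \mathrm{id})$ as the contact double of $W' \times [-1,1]$, that is, as $\mathcal{OB}(W', \mathrm{id})$.

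The hard part will be the step where I must check that the reassembled double is genuinely $\mathcal{OB}(W',\mathrm{id})$ and not a twisted variant: this requires that the relative contactomorphisms of \cref{thm:main_moves} match the doubling gluing on $C'$, and that $B_2$ really is the mirror of $B_1$ realizing the identical modification $W \rightsquigarrow W'$ on the reflected copy. Both hinge on the reflection symmetry of the trivial-monodromy open book; were the monodromy nontrivial this symmetry would be obstructed, which is consistent with the theorem being stated only for $\mathrm{id}$. I would also need to confirm that the bypass attaching data for $B_1$ and $B_2$ can be placed disjointly, which follows from shrinking their supports and thickening $C$.
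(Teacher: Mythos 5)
There is a genuine gap, and it sits at the step you call "the key point." The principle you invoke --- that a bypass attachment does not alter the ambient contact manifold but merely realizes an isotopy of the convex hypersurface --- is true only for bypasses that are \emph{already embedded} in the ambient manifold on the exterior side of the hypersurface. \cref{thm:main_moves} supplies \emph{abstract} bypasses: it describes the result of gluing a contact $n$/$(n+1)$-handle pair onto the abstract handlebody $H$, and gives no guarantee that this handle pair embeds in the exterior of an embedded copy of $H$. The paper is explicit about this distinction in the introduction ("given a contact handlebody embedded inside a contact manifold, the theorem does not guarantee the existence of such exterior bypass attachments"), and the distinction is not pedantic: attaching an abstract bypass genuinely changes the contact manifold in general --- it can even render the contact structure overtwisted \cite{HH18} --- which would be impossible if your principle held. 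Your argument compounds the problem by placing $B_1$ inside the invariant collar $C\cong\Sigma\times[-1,1]$. A bypass embedded in an invariant collar realizes an isotopy of $\Sigma$ through convex hypersurfaces with unchanged convex decomposition, so only trivial bypasses live there; but the clasping and stabilizing bypasses are explicitly not trivial (see the remark after \cref{prop:clasping_bypass}), and they change $R_\pm$ from $W$ to $W'$. So when $W'\not\cong W$ the bypass $B_1$ cannot be supported in $C$, and the decomposition $(H_1\cup B_1)\cup C'\cup(H_2\cup B_2)$ you want to exhibit inside $\mathcal{OB}(W,\mathrm{id})$ never gets off the ground. The issues you flag at the end (mirror symmetry, untwisted regluing, disjointness of supports) are downstream of this and are not the real obstruction.

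For contrast, the paper's proof avoids ambient embedding questions entirely by working with abstract open books: it shows that $(W,\mathrm{id})$ and $(W',\mathrm{id})$ admit a \emph{common positive stabilization} $(W'',\tau_L)$, where $W''$ is $W$ with an $n$-handle attached along $\Lambda_-\uplus\Lambda_+$ and $L$ is a Lagrangian sphere obtained by capping off a Lagrangian disk $D\subset W$ with the core of that handle. The real work --- the analogue of what your sketch would have to supply --- is the construction of $D$: one forms the Legendrian boundary sum $D_-\uplus_b D_+$ inside the contact handlebody and verifies that its projection $\pi_W(D_-\uplus_b D_+)$ to the page is an \emph{embedded} Lagrangian disk, which uses the specific features of the bypass data (that $D_+$ is a Reeb pushoff of a handle cocore and that $D_-$ sits above $D_+$ near their intersection point); this fails for arbitrary bypass data. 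Invariance of $\mathcal{OB}(\cdot)$ under positive stabilization then finishes the proof. If you want to salvage your doubling picture, the honest version of your step two is to prove that the clasping/stabilizing bypass for $\partial H_1$ embeds in the exterior $C\cup H_2$ (not in $C$), and that is essentially equivalent to the embeddedness argument the paper carries out.
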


\begin{example}
In the same spirit as \cref{cor:darboux_spheres}, if $W^{2n}$ is a Weinstein Mazur manifold or an exotic Weinstein ball, \cref{thm:moves_for_obds} implies that $\mathcal{OB}(W,\mathrm{id}) = (S^{2n+1}, \xi_{\mathrm{std}})$.  See \cref{subsec:mazur}.
\end{example}

In \cref{sec:appearance_high_dim} we present a proof of \cref{thm:moves_for_obds} using our bypass moves, without making explicit reference to fillings. The hope is to use bypass moves to study non-identity monodromies, in particular those that may not obviously factor as Dehn twists; toward this goal our proof may be instructive.

However, \cref{thm:moves_for_obds} can alternatively be deduced by applying \cite[Theorem 14.3]{cieliebak2012stein}, Cieliebak-Eliashberg's $h$-principle for flexible Weinstein domains, to subcritical fillings of the contact manifolds in question. Indeed, \cref{thm:main_moves} describes ($2n+1$)-dimensional contact geometric bypass moves, but underlying them are local modifications of $2n$-dimensional Weinstein domains. In the case of identity open book monodromies, these page modifications induce modifications of splittings of subcritical ($2n+2$)-dimensional fillings. We take this perspective in the following theorem, which subsumes \cref{thm:moves_for_obds}.

\begin{restatable}{theorem}{movesSufficient}\label{thm:moves_sufficient_for_stable_equivalence}
Let $(W,\lambda,\phi)$ and $(W',\lambda',\phi')$ be Weinstein domains of dimension $2n \geq 4$.  The following are equivalent:
\begin{enumerate}
    \item $(W\times D^2, d\lambda + \omega_{\mathrm{std}})$ is almost symplectomorphic to $(W'\times D^2, d\lambda' + \omega_{\mathrm{std}})$;\label{thm-part:stably-almost-symplectomorphic}
    \item $(W,\lambda,\phi)$ and $(W',\lambda',\phi')$ admit Weinstein handle decompositions related to one another by the moves described in \cref{thm:main_moves}.\label{thm-part:related-by-moves}
\end{enumerate}
\end{restatable}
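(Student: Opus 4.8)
The plan is to prove the two implications separately, with the subcritical stabilization $W\times D^2$ as the central object and its flexibility as the engine. For $(2)\Rightarrow(1)$ I would show that each move preserves the almost symplectomorphism class of $W\times D^2$. As almost symplectomorphism is purely formal data --- a diffeomorphism together with a homotopy of the ambient almost complex structures --- it suffices to check that the moves preserve (a) the formal Weinstein homotopy type and (b) the diffeomorphism type of $W\times D^2$. Both moves are formally trivial: the (un)clasping move is a crossing change on a pair of index-$n$ attaching spheres, while the (de)stabilizing move alters an attaching sphere within its formal Legendrian class (loosifying it when $2n\ge 6$), so neither changes the homotopy class of the compatible almost complex structure. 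The smooth type of $W\times D^2$ is likewise preserved after one stabilization: a crossing of Legendrian $(n-1)$-spheres can be undone by an ambient isotopy in the $(2n+1)$-dimensional contact boundary of $W\times D^2$, since there $2(n-1)+1<2n+1$ leaves room to separate the sheets, and the loosifying move already preserves the smooth isotopy type when $2n\ge 6$ (and is absorbed after stabilization when $2n=4$). Hence $W\times D^2$ and $W'\times D^2$ are almost symplectomorphic.

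For $(1)\Rightarrow(2)$, the structural observation is that $W\times D^2$ is subcritical: an index-$k$ Weinstein handle of $W$ yields an index-$k$ handle of the $(2n+2)$-dimensional domain $W\times D^2$, and $k\le n<n+1$. Subcritical domains are flexible, and since $2n+2\ge 6$, Cieliebak-Eliashberg's $h$-principle \cite[Theorem 14.3]{cieliebak2012stein} upgrades the formal equivalence in $(1)$ to a genuine Weinstein deformation equivalence $W\times D^2\simeq W'\times D^2$. I would then view this as presenting $W$ and $W'$, up to Weinstein deformation, as two product splittings (``pages'') of a single subcritical domain $V$, so that the problem reduces to showing that any two such splittings are related by page moves.

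This descent from the filling to the page is the heart of the argument, and the step I expect to be the main obstacle. I would present the deformation equivalence $W\times D^2\simeq W'\times D^2$ as a finite sequence of elementary modifications of a subcritical Weinstein handle decomposition --- Legendrian isotopies and handle slides of attaching spheres, together with creation and cancellation of canceling $(k,k+1)$-handle pairs --- and then realize each as a bypass move on the page. The correspondence is built into the bypass itself, which is a canceling pair of contact handles of index $n$ and $n+1$: creation or cancellation of such a pair in the filling is the shadow of the (de)stabilizing move, while isotopies and slides of the top-index attaching spheres are the shadow of the (un)clasping move, so that each elementary step takes place in a chart of the type appearing in \cref{fig:thm_moves} and is identified by \cref{thm:main_moves} with one of the listed moves. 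The difficulty is twofold: a deformation of $V$ need not respect either product structure, so one must first arrange it in a normal form that is a product isotopy away from a controlled collection of $(n,n+1)$-handle modifications; and one must verify that manipulations of the subcritical handles of index $<n$ --- which the moves do not address directly --- can be standardized and therefore introduce no further relations. Once the deformation is placed in such product-compatible form, each step descends to a listed move, and together with $(2)\Rightarrow(1)$ this establishes the equivalence.
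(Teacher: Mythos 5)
Your direction (2)$\Rightarrow$(1) and the opening of (1)$\Rightarrow$(2) --- subcriticality of $W\times D^2$, the $h$-principle \cite[Theorem 14.3]{cieliebak2012stein} to upgrade the formal equivalence to a genuine Weinstein equivalence, and the reduction to comparing two splittings of a single subcritical domain $V$ --- all match the paper. But the descent step is where your proposal and the paper part ways, and the gap is larger than the normal-form difficulty you flag. The paper never decomposes the Weinstein homotopy of $V$ into Cerf-type elementary moves. Instead it follows Cieliebak's splitting construction \cite{Cie02subscritical}: the attaching sphere of each subcritical handle of $V$ is isotoped into the form $\Lambda^{k-1}\times\{0\}\subset\partial W_0\times D^2$, and the only ambiguities are (a) the smooth isotopy class of the attaching \emph{link} $\Lambda\subset\partial W_0$ and (b), when $k=n$, its Thurston--Bennequin framing. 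For (a), the paper assumes inductively that $W_0$ is subcritical, so that $\skel(W_0)\times S^1$ has codimension at least $n+1$ in $\partial(W_0\times D^2)$; an ambient isotopy upstairs can then be pushed off the skeleton and projected to a homotopy downstairs whose finitely many failures of embeddedness are crossing changes, each realized by the clasping move of \cref{prop:clasping_bypass} (or the crossing-change move of \cref{prop:crossing_change} when $k=n=2$). Your proposal gestures at crossing changes but supplies neither this codimension argument nor any substitute for it, and your own stated obstacles (product-incompatibility of the deformation, standardization of low-index handles) are left unresolved.

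The more serious omission is (b), which your proposal never mentions and which occupies over half of the paper's proof. Two framed attaching spheres in $\partial W_0$ can be smoothly isotopic and become framed-isotopic after crossing with $D^2$, while their framings downstairs differ by a nontrivial element of $\ker\bigl(\pi_{n-1}SO(n)\xrightarrow{\iota_*}\pi_{n-1}SO(n+1)\bigr)$; the resulting pages are genuinely different Weinstein domains splitting the same $V$, and no amount of isotopy or clasping relates them. Closing the argument requires showing that the stabilizing move of \cref{prop:stabilizing_bypass} generates this cyclic kernel, which the paper does by an explicit computation: the Seifert framing of the stabilized sphere determines a map $\varphi\colon S^{n-1}\to SO(n)$, which is identified with the image of a generator under the connecting homomorphism $\pi_nS^n\to\pi_{n-1}SO(n)$ of the bundle $SO(n)\hookrightarrow SO(n+1)\to S^n$. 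Relatedly, your dictionary is wrong: you match (de)stabilization with birth/death of $(k,k+1)$-handle pairs in the filling, but $W\times D^2$ is subcritical --- its Weinstein handle decomposition has no index-$(n+1)$ handles to cancel --- and the $n$- and $(n+1)$-handles appearing in a bypass are \emph{contact} handles of the hypersurface theory, not Weinstein handles of $V$. The true role of stabilization in this theorem is exactly the framing correction (b), so even a completed version of your normal-form strategy could not conclude without the $\pi_{n-1}SO(n)$ analysis.
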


\noindent It is not difficult to see that \eqref{thm-part:related-by-moves} implies \eqref{thm-part:stably-almost-symplectomorphic}. Indeed, if $f\colon S^{k-1}\hookrightarrow \partial W$ is the attaching sphere of a Weinstein $k$-handle, then both moves described by \cref{thm:main_moves} preserve the isotopy class of $df\colon TS^{k-1}\to f^*\xi$ in the space of isotropic monomorphisms, where $\xi$ is the contact structure induced on $\partial W$ by the Weinstein structure. (In the case $k=n$, this is simply the rotation class of $f$.)  Whatever the value of $k$, $f$ is subcritical as an isotropic sphere in $\partial(W\times D^2)$, and the $h$-principle \cite[Theorem 12.4.1]{eliashberg2002h} tells us that both of our moves preserve the isotropic isotopy class of $f$ in $\partial(W\times D^2)$. Alternatively, our proof of \cref{thm:moves_for_obds} also gives the implication \eqref{thm-part:related-by-moves} $\implies$ \eqref{thm-part:stably-almost-symplectomorphic} by upgrading the argument to track its effect on the fillings. We prove \eqref{thm-part:stably-almost-symplectomorphic} $\implies$ \eqref{thm-part:related-by-moves} in \cref{sec:appearance_high_dim}.

In \cref{sec:subcritically-examples} we use the following corollary of \cref{thm:moves_sufficient_for_stable_equivalence} to exhibit a menagerie of interesting convex decompositions of hypersurfaces in closed contact manifolds, in particular the spheres described in \cref{cor:darboux_spheres}.

\begin{restatable}{corollary}{highDimCor}\label{cor:subcritically-fillable}
Let $W\subset (M^{2n+1}, \xi=\ker\alpha)$ be an embedded Weinstein hypersurface and let $\Sigma = \partial N(W)$ be the convex boundary of its standard neighborhood. Let $(W', \lambda',\phi')$ be a Weinstein domain such that $(W'\times D^2, d\lambda' + \omega_{\mathrm{st}})$ is almost symplectomorphic to $(W \times D^2,d\alpha\vert_W + \omega_{\mathrm{st}})$. Then there is a convex hypersurface $\Sigma'\subset M$ smoothly isotopic to $\Sigma$ such that $R_{\pm}(\Sigma')$ is deformation equivalent to $(W', \lambda',\phi')$.
\end{restatable}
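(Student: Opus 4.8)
The plan is to reduce the statement to the combinatorial equivalence furnished by \cref{thm:moves_sufficient_for_stable_equivalence} and then to realize the resulting moves by \emph{interior} bypass attachments inside the fixed neighborhood $N(W)$. First I would record that the standard neighborhood is a contact handlebody $N(W) = W \times [-1,1]$ over the induced Weinstein structure $(W,\alpha|_W)$, so that $\Sigma = \partial N(W)$ is convex with $R_\pm(\Sigma)$ deformation equivalent to $W$. The hypothesis that $(W' \times D^2, d\lambda' + \omega_{\mathrm{st}})$ is almost symplectomorphic to $(W \times D^2, d\alpha|_W + \omega_{\mathrm{st}})$ is exactly condition \eqref{thm-part:stably-almost-symplectomorphic} of \cref{thm:moves_sufficient_for_stable_equivalence}. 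Invoking the implication \eqref{thm-part:stably-almost-symplectomorphic} $\Rightarrow$ \eqref{thm-part:related-by-moves}, I obtain Weinstein handle decompositions of $W$ and $W'$ related by a finite sequence of the moves of \cref{thm:main_moves}.

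Next I would turn this sequence into a nested family of contact handlebodies. Because the move set of \cref{thm:main_moves} is closed under inversion --- clasping/unclasping and stabilizing/destabilizing each occur in both directions --- the equivalence can be presented as a chain $W' = U_0, U_1, \dots, U_k = W$ in which every consecutive pair $U_j \rightsquigarrow U_{j+1}$ is a \emph{forward} move in the sense of \cref{thm:main_moves}. Applying that theorem to each step produces an abstract bypass $B_j$ with $(U_j \times [-1,1]) \cup B_j$ contactomorphic relative to the boundary to $(U_{j+1} \times [-1,1])$. Composing these identifications realizes
\[
N(W) \;=\; W \times [-1,1] \;\cong\; \bigl(W' \times [-1,1]\bigr) \cup B_0 \cup \dots \cup B_{k-1}
\]
as the contact handlebody over $W'$ with a sequence of bypasses attached, the contactomorphism fixing the outer boundary $\Sigma$. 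Crucially, each $B_j$ is attached within the ambient neighborhood and requires no exterior room in $M$; this is the interior-bypass interpretation noted after \cref{thm:main_moves}, obtained by taking the larger handlebody to be the embedded one.

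I would then read off the desired hypersurface. The sub-handlebody $W' \times [-1,1] \subset N(W)$ is itself a contact handlebody, so (the image under the above contactomorphism of) its rounded boundary $\Sigma' := \partial(W' \times [-1,1])$ is a convex hypersurface in $M$ with $R_\pm(\Sigma')$ deformation equivalent to $(W', \lambda', \phi')$. It remains to see that $\Sigma' \simeq \Sigma$ smoothly. Each abstract bypass $B_j$ is a smoothly canceling pair of handles of index $n$ and $n+1$, so the trace of its attachment is a smooth product cobordism on boundaries; iterating, the region $N(W) \setminus \interior(W' \times [-1,1])$ is a smooth collar, giving a smooth isotopy from $\Sigma'$ to $\Sigma$ inside $M$. (As flagged in \cref{thm:main_moves}, the smooth triviality of the destabilizing move holds once $2n \geq 6$, while the clasping moves preserve smooth isotopy type in every dimension.) Specializing to $M = D^{2n+1}$ recovers \cref{cor:darboux_spheres}.

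The main obstacle I anticipate lies in the second step: one must verify that reversing the move chain keeps every step realizable as a genuine bypass \emph{attachment} rather than a removal, and that the successive attachments can be made disjoint and contained in $N(W)$ so that the intermediate handlebodies nest cleanly. Both points rest on \cref{thm:main_moves} being symmetric in its move set and on the locality of each bypass; granting these, the smooth-collar argument for the isotopy is the only remaining delicate input, and it is precisely where the dimension hypothesis $2n \geq 6$ enters for the destabilizing move.
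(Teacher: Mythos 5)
Your proposal is correct and takes essentially the same route as the paper's (much terser) proof: invoke the implication \eqref{thm-part:stably-almost-symplectomorphic} $\Rightarrow$ \eqref{thm-part:related-by-moves} of \cref{thm:moves_sufficient_for_stable_equivalence}, realize the resulting chain of moves as interior bypass attachments so that the contact handlebody $N(W')$ nests inside $N(W)$ with complement a bypass cobordism, and take $\Sigma' = \partial N(W')$, with the smooth isotopy furnished by the fact that each bypass is a smoothly canceling pair of $n$- and $(n+1)$-handles. One small correction to your closing remark: the smooth isotopy from $\Sigma'$ to $\Sigma$ requires no dimension hypothesis at all, since the bypass handles cancel smoothly in every dimension $2n\geq 4$; the condition $2n\geq 6$ in \cref{thm:main_moves} concerns the smooth isotopy type of the Weinstein hypersurface $W\times\{0\}$ (the core), not of the convex boundary, which is why \cref{cor:subcritically-fillable} carries no such hypothesis.
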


As a final application of our bypass moves, we recover the following $h$-principle for Weinstein hypersurface embeddings.  This result seems to be known to experts (see, for example, the proof of \cite[Theorem 4.12]{lazarev2020maximal}), but the authors are unaware of an explicit statement in the literature.  We thank Oleg Lazarev for correspondence which led to its inclusion here.

\begin{restatable}{corollary}{weinsteinHPrinciple}\label{cor:weinstein-h-principle}
Let $W\subset (M^{2n+1}, \xi=\ker\alpha)$ be an embedded Weinstein hypersurface of dimension $2n\geq 6$ and let $(W',\lambda',\phi')$ be a Weinstein domain with $(W',d\lambda')$ almost symplectomorphic to $(W,d\alpha\vert_W)$.  Then there is a smooth isotopy of $W\subset M$ to the image of a Weinstein embedding $(W',\lambda',\phi') \hookrightarrow (M, \xi)$.
\end{restatable}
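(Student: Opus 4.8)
The plan is to deduce this from \cref{cor:subcritically-fillable} after upgrading the unstabilized hypothesis to its stabilized form, and then to promote the resulting isotopy of convex hypersurfaces to an isotopy of the core Weinstein hypersurface itself, using the dimension hypothesis $2n \geq 6$.

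First I would record that an almost symplectomorphism $(W, d\alpha|_W) \to (W', d\lambda')$ is a diffeomorphism $f$ together with a homotopy of the associated nondegenerate $2$-forms (equivalently, of almost complex structures) from $d\alpha|_W$ to $f^* d\lambda'$. Taking the product with the identity on the standard symplectic disk $(D^2, \omega_{\mathrm{st}})$ gives a diffeomorphism $f \times \mathrm{id}$ and a homotopy through block forms $\omega_t + \omega_{\mathrm{st}}$, each nondegenerate on the product, so that
\[
(W \times D^2, d\alpha|_W + \omega_{\mathrm{st}}) \longrightarrow (W' \times D^2, d\lambda' + \omega_{\mathrm{st}})
\]
is an almost symplectomorphism. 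Thus the hypothesis of \cref{cor:subcritically-fillable} holds with $\Sigma = \partial N(W)$.

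Next I would apply \cref{cor:subcritically-fillable} to obtain a convex hypersurface $\Sigma' \subset M$, smoothly isotopic to $\Sigma = \partial N(W)$, with $R_\pm(\Sigma')$ deformation equivalent to $(W', \lambda', \phi')$. Tracing through the construction rather than using only its conclusion, $\Sigma'$ is produced from $\Sigma$ by a finite sequence of interior bypass attachments realizing the moves of \cref{thm:main_moves}, and the terminal hypersurface is again the rounded boundary of a standard neighborhood of an embedded Weinstein hypersurface $\iota\colon W' \hookrightarrow M$ whose induced Weinstein structure is deformation equivalent to $(W',\lambda',\phi')$. This $\iota$ is the candidate Weinstein embedding, so it remains only to realize the passage from $W$ to $\iota(W')$ by a smooth isotopy of the codimension-one hypersurface itself, not merely of its neighborhood boundary.

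The main obstacle is precisely this last step, and it is where $2n \geq 6$ is essential. Each move of \cref{thm:main_moves} is, at the smooth level, either a destabilization — which by the final clause of \cref{thm:main_moves} preserves the smooth isotopy type of the underlying hypersurface once $2n \geq 6$ — or a clasp move, which is a crossing change on an attaching sphere $S^{n-1}$ sitting in codimension $n \geq 3$ in its ambient contact sphere and is therefore undone by a generic smooth isotopy. Concatenating these isotopies across the sequence of moves yields a smooth isotopy of $W \subset M$ onto $\iota(W')$, the desired Weinstein embedding. By contrast, when $2n = 4$ these same moves are crossing changes and stabilizations of Legendrian knots in $S^3$, which genuinely knot the hypersurface; it is exactly the codimension-$\geq 3$ phenomena available for $2n \geq 6$ that make the formal almost-symplectic data sufficient to determine the embedding up to smooth isotopy.
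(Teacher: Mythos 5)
Your reduction to \cref{cor:subcritically-fillable} is fine, and you correctly isolate the real issue: the corollary only controls the closed hypersurface $\partial N(W)$, so one must separately produce an isotopy of the core Weinstein hypersurface. The gap is in how you close that issue. Your claim that a clasp move is ``a crossing change on an attaching sphere $S^{n-1}$ sitting in codimension $n\geq 3$ \ldots and is therefore undone by a generic smooth isotopy'' is false. Codimension $\geq 3$ (via Haefliger \cite[Corollary 6.6]{haefliger1966differentiable}) rules out knotting of an \emph{individual} $(n-1)$-sphere in a $(2n-1)$-manifold, but it does not rule out \emph{linking}: two disjoint $(n-1)$-spheres in a $(2n-1)$-manifold have a well-defined linking number, which is exactly what the clasp move of \cref{prop:clasping_bypass} changes. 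In a generic one-parameter family the spheres cross at isolated times ($(n-1)+(n-1)+1 = 2n-1$), so linking number is an isotopy invariant and a clasp cannot be undone by isotopy. The paper flags precisely this point in the proof of \cref{thm:moves_sufficient_for_stable_equivalence}, noting that linked attaching spheres prevent an appeal to Haefliger. Worse, a clasp move can change the diffeomorphism type of the underlying Weinstein domain (attaching handles along a generalized Hopf link of $(n-1)$-spheres yields a plumbing rather than a boundary connected sum of disk bundles), so intermediate stages of the move sequence supplied by \cref{thm:moves_sufficient_for_stable_equivalence} need not even be diffeomorphic to $W$, and there is no hope of arguing move-by-move that the smooth isotopy class of the hypersurface is preserved.

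The paper's proof is structured specifically to avoid this trap: it never invokes \cref{thm:moves_sufficient_for_stable_equivalence} or any clasp move. Instead, it uses only the \emph{stabilizing} move of \cref{thm:main_moves} --- the one move whose ``moreover'' clause guarantees preservation of the smooth isotopy type when $2n\geq 6$ --- to flexibilize $W$ in place, producing $W_{\mathrm{flex}}\subset M$ smoothly isotopic to $W$ with $N(W_{\mathrm{flex}})\subset N(W)$, and likewise relates the abstract domain $(W',\lambda',\phi')$ to its flexibilization, giving $N(W')\hookrightarrow N(W'_{\mathrm{flex}})$. Since $W_{\mathrm{flex}}$ and $W'_{\mathrm{flex}}$ are flexible and almost symplectomorphic, the $h$-principle for flexible Weinstein domains \cite[Theorem 14.5]{cieliebak2012stein} identifies $N(W'_{\mathrm{flex}})$ with $N(W_{\mathrm{flex}})$ genuinely, with no further moves; nesting the embeddings then gives the Weinstein embedding of $W'$ together with the smooth isotopy. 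To repair your argument you would need to replace the clasp-move step with this flexibilization argument, at which point you have reproduced the paper's proof rather than an alternative one.
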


The bypass moves in \cref{thm:main_moves} are bound to show up in many other contexts in higher dimensions. For example, \eqref{part:graph} of \cref{thm:main_dim3} suggests to look for them near neighborhoods of \textit{arboreal Legendrian singularities}. First introduced by Nadler \cite{nadler2017arboreal}, arborealization has since become a significant program in higher dimensional symplectic topology \cite{starkston2018arboreal,alvarez2023stability,alvarezgavela2022positivearborealizationpolarizedweinstein}. Arboreal Legendrian singularities admit canonical Weinstein thickenings and thus canonical contact handlebody neighborhoods. We close with the following problem: 

\begin{problem}
Show that the canonical contact handlebody neighborhoods of arboreal singularities of Legendrian submanifolds are related to each other by the bypass moves of \cref{thm:main_moves}, generalizing \eqref{part:graph} of \cref{thm:main_dim3}.    
\end{problem}

\subsection{Organization}

In \cref{sec:preliminaries} we provide background information on convex hypersurface theory, contact handles, bypass attachments, and open book decompositions. In \cref{sec:bypass_moves} we establish the bypass moves of \cref{thm:main_moves}. In \cref{sec:appearance_dim_3} we identify $3$-dimensional analogues of the bypass moves and prove \cref{thm:main_dim3} and \cref{cor:heegaard}. Finally, in \cref{sec:appearance_high_dim} we prove \cref{thm:moves_for_obds} and \cref{thm:moves_sufficient_for_stable_equivalence} and apply \cref{cor:subcritically-fillable} to some noteworthy examples from the literature.

\subsection{Acknowledgments}

The authors would like to thank John Etnyre and Ko Honda for helpful discussions in the preparation of this article, and Oleg Lazarev for a fruitful correspondence following an early draft. We additionally thank an anonymous referee for several helpful suggestions and corrections, including a significant simplification of the proof of \cref{thm:moves_sufficient_for_stable_equivalence}.
\section{Preliminaries}\label{sec:preliminaries}

In this section we collect the background material needed to use bypass attachments in higher dimensions. \cref{subsec:convex} covers convexity, \cref{subsec:kirby} and \cref{subsec:dec_diags} discuss Legendrian Kirby calculus and contact handles, \cref{subsec:Bypass} defines bypass attachments, \cref{subsec:OBD} and \cref{subsec:OBDandBypass} describe open book decompositions and their relation to contact handles and bypasses, and \cref{subsec:flex} discusses the flexibility and looseness of Weinstein hypersurfaces.

\subsection{Convex hypersurface theory}\label{subsec:convex}

First, we recall the basics of convex hypersurface theory \cite{giroux1991convexite, HH18} and introduce notation and conventions that will be used throughout. 

\begin{definition}
Let $\Sigma \subset (M, \xi)$ be an embedded hypersurface in a contact manifold. Its \textbf{characteristic foliation} is the singular $1$-dimensional foliation of $\Sigma$ given by $\Sigma_{\xi} := (T\Sigma \cap \xi)^{\bot}$, where $\bot$ is the symplectic orthogonal complement taken with respect to the conformal symplectic structure on $\xi$. 
\end{definition}

If $\xi$ and $\Sigma$ are cooriented, then singular points of the foliation, i.e., points $p\in \Sigma$ where $T_p\Sigma = \xi_p$, are by definition classified as \textit{positive} or \textit{negative} according to whether the coorientations of $T_p\Sigma$ and $\xi_p$ agree or disagree, respectively. Moreover, at nonsingular points, the foliation is oriented according to the coorientations of $\Sigma$ and $\xi$. We may therefore choose a vector field $v\in \mathfrak{X}(\Sigma)$ which directs the oriented foliation $\Sigma_{\xi}$. Zeroes of $v$ correspond to singular points of the foliation, and are likewise classified as positive or negative. 

\begin{definition}
Let $\Sigma$ be a closed and oriented manifold of dimension $2n\geq 2$. We say that $\phi\in\mathfrak{Morse}_n^n(\Sigma)$ if the following properties hold. 
\begin{enumerate}
    \item The function $\phi:\Sigma \to \R$ is generalized Morse, allowing birth-death  critical points.
    \item There is a regular value $c\in \R$, called a \textbf{splitting value for $\phi$}, such that
    \begin{enumerate}
        \item $\phi\vert_{\phi^{-1}(-\infty,c)}$ has critical points of index at most $n$, and likewise 
        \item $\phi\vert_{\phi^{-1}(c,\infty)}$ has critical points of index at least $n$.
    \end{enumerate}
\end{enumerate}
\end{definition}
\noindent Our notation follows that of $\mathfrak{Morse}_n(W^{2n})$ used by \cite{cieliebak2012stein} to identify generalized Morse functions with critical points of index at most $n$. 

\begin{definition}[\cite{giroux1991convexite,honda2019convex,eliashberg2022hondahuangs}]\label{def:convex2.0}
Let $(M, \xi)$ be a cooriented contact manifold of dimension $2n+1\geq 3$, and let $\Sigma\subset M$ be a cooriented embedded closed hypersurface. We say that $\Sigma$ is \textbf{convex} if there is a contact vector field $X$ everywhere transverse to $\Sigma$. We say that $\Sigma$ is \textbf{Weinstein convex} if there is a smooth function $\phi\in\mathfrak{Morse}_n^n(\Sigma)$, called a \textbf{good Lyapunov function}, satisfying the following properties: 
\begin{enumerate}
    \item There is a vector field $v$ directing the characteristic foliation $\Sigma_{\xi}$ that is gradient-like for $\phi$; i.e., $\phi$ is a Lyapunov function for $v$. 
    \item There is a \textbf{good splitting value} $c\in \R$ for $\phi$ such that all positive singular points of $\Sigma_{\xi}$ are in $\phi^{-1}(-\infty,c)$ and all negative singular points of $\Sigma_{\xi}$ are in $\phi^{-1}(c,\infty)$. \label{part:convex2.0_def2}
\end{enumerate}
\end{definition}

\begin{remark}
We adopt the language \textit{good Lyapunov function} from \cite{eliashberg2022hondahuangs}. Moreover, we note if a characteristic foliation admits a good Lyapunov function, it admits many different good Lyapunov functions, as well as Lyapunov functions which are not good, i.e., that do not satisfy \eqref{part:convex2.0_def2}. In fact, it is equivalent to require the existence of any Lyapunov function, together with the assumption that there are no trajectories of the characteristic foliation from negative singular points to positive singular points; see the discussion following \cite[Definition 3.6]{eliashberg2022hondahuangs}. For this reason, we do not explicitly include $\phi$ or any other data when referring to the Weinstein convex hypersurface $\Sigma$.  
\end{remark}

The following lemma justifies the language \emph{Weinstein convex}.

\begin{lemma}\cite[Lemma 3.10]{eliashberg2022hondahuangs}
If $\Sigma$ is Weinstein convex, it is convex.  
\end{lemma}

By integrating a complete\footnote{Completeness may always be achieved (in an arbitrarily small neighborhood of $\Sigma$) by decaying the vector field via a bump contact Hamiltonian function.} transverse contact vector field $X =: \partial_z$ giving the coorientation of $\Sigma$, we identify a vertically invariant neighborhood
\[
N(\Sigma) \cong \left(\Sigma \times \R_z, \, \alpha = f\, dz + \beta\right)
\]
where $\Sigma = \Sigma \times \{0\}$, and $f: \Sigma \to \R$ and $\beta\in \Omega^1(\Sigma)$ are $z$-invariant. The contact condition forces $f\pitchfork 0$, hence
\[
\Gamma:=\{f=0\}=\{\alpha(X)=0\} = \{X(p) \in \xi_p\}
\]
is a smoothly embedded codimension-$1$ contact submanifold of $\Sigma$ called the \textit{dividing set}. While $\Gamma$ depends on the choice of contact vector field $X$, its (contact) isotopy class in $\Sigma$ does not, as the space of contact vector fields transverse to $\Sigma$ is contractible. The \textit{positive} (resp. \textit{negative}) \textit{region of $\Sigma$} is given by $R_{\pm}:= \{\pm f > 0\} = \{\pm\alpha(X) >0\}$.

The language \textit{Weinstein convex} used in \cref{def:convex2.0} is appropriate for the following reason. Let $\Sigma$ be a Weinstein convex hypersurface and $N(\Sigma) \cong (\Sigma \times \R, \alpha = f\, dz + \beta)$ an invariant neighborhood. Let $\phi\in\mathfrak{Morse}_n^n(\Sigma)$ be a good Lyapunov function with good splitting value $c$. We may assume that $\phi^{-1}(c) = \Gamma = \{f=0\}$. Let $\phi_{\pm}:= \pm \phi\vert_{R_{\pm}}$. By \eqref{part:convex2.0_def2} of \cref{def:convex2.0} it follows that $\phi_{\pm}\in \mathfrak{Morse}_n(R_{\pm})$. For $\ve > 0$ sufficiently small, set $\lambda_{\pm}:=\frac{\ve\, \beta}{\ve \pm f}\vert_{\bar{R}_{\pm}}$. Then $\lambda_{\pm}$ is a Liouville form on $\bar{R}_{\pm}$ and the Liouville vector field $X_{\lambda_{\pm}}$ is gradient-like for $\phi_{\pm}$. Thus, 
$(\bar{R}_{\pm}, \lambda_{\pm}, \phi_{\pm})$ is a Weinstein domain. \footnote{Alternatively, we may consider $\bar{R}_+$ and $\bar{R}_-$ as ideal Liouville domains in the sense of \cite{giroux2020ideal}.}

The most important example of a convex hypersurface for our purposes is the smoothed boundary of a contact handlebody $H = (W \times [-1,1],\, \xi = \ker(dt +\lambda))$. After rounding along the edges $\{\pm 1\} \times \partial W$ (see \cite[\S 3.2]{avdek2012liouville} for details), the boundary $\partial H$ is Weinstein convex with dividing set $\{0\}\times \partial W$, positive region the rounding of $((0,1] \times \partial W) \cup (\{1\} \times W)$ , and negative region the rounding of $([-1,0) \times \partial W) \cup (\{-1\} \times W)$.

\subsection{Legendrian Kirby calculus}\label{subsec:kirby}

Bypass calculations beyond dimension $5$ involve manipulations of high dimensional Legendrian Kirby diagrams. One way to present high dimensional front projections is to spin a planar front around a central vertical axis; see \cref{fig:spin1}. Casals-Murphy \cite{casals2019fronts} established conventions for a wider class of diagrams that do not necessarily exhibit \textit{global} spin symmetry, but we do not need this level of generality; all of our high dimensional front projections are globally centrally spun. Note that care must be taken when manipulating spun diagrams --- for instance, Reidemeister moves must respect any spin symmetry. 

\begin{figure}[ht]
	\begin{overpic}[scale=0.5]{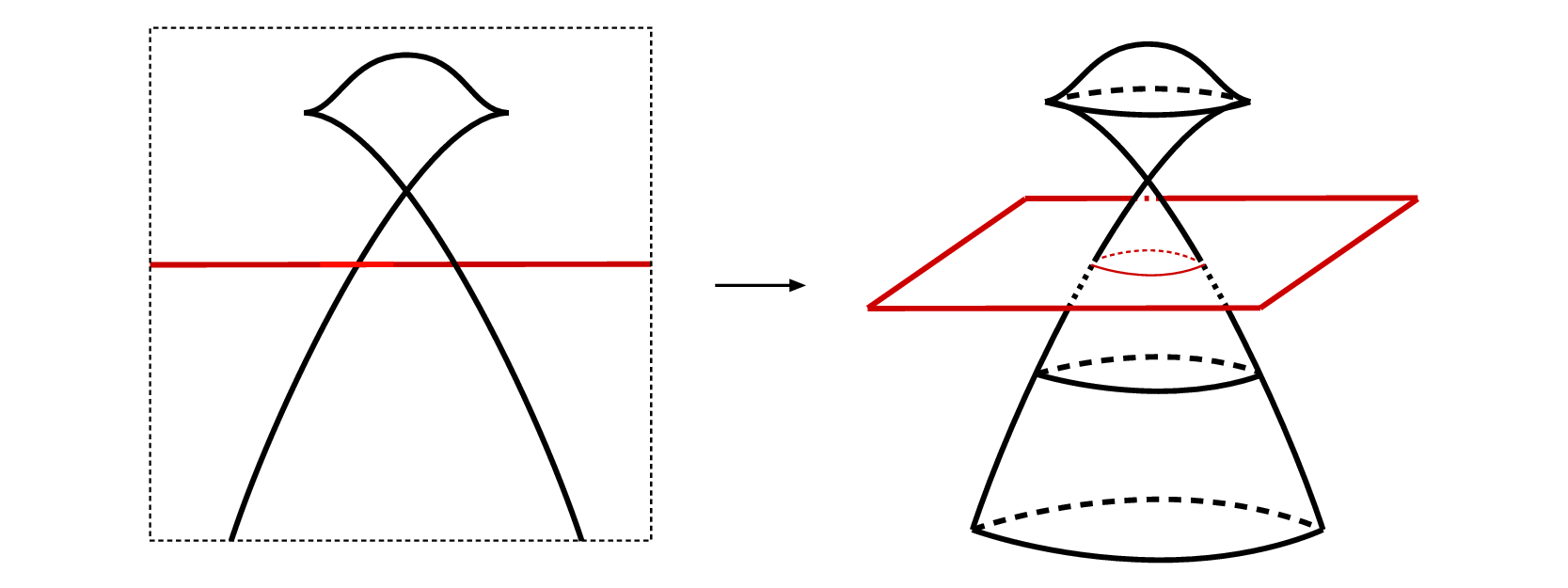}
	 \put(25.25,0.5){\small $\Gamma'$}
     \put(39,3.75){\small $\Gamma$}
     \put(35,27){\small $(-1)$}
	\end{overpic}
	\caption{A planar front projection implicitly $S^1$-spun around the central vertical axis to produce a Legendrian surface front projection. Such a diagram depicts, for example, Legendrian surfaces in the $5$-dimensional dividing set of a $6$-dimensional convex hypersurface in a $7$-dimensional contact manifold.}
	\label{fig:spin1}
\end{figure}

We typically enclose surgery diagrams in a box to emphasize their locality; contrast \cref{fig:mazur}, which is a global surgery diagram. In the presence of surgeries, we often include two additional labels beyond surgery coefficients: one in the lower right corner to indicate the contact manifold represented by the empty diagram, and one below to indicate the result of the surgeries. Our surgery coefficients are always explicit: if a Legendrian is drawn without a $(\pm 1)$-surgery coefficient, no surgery is being performed. For instance, in \cref{fig:spin1}, the contact manifold $\Gamma'$ is produced by performing $(-1)$ surgery on $\Gamma$ along only the black Legendrian; the red Legendrian is just another Legendrian (viewed either in $\Gamma$ or $\Gamma'$) of which we may wish to keep track. 

Thanks to the following proposition from \cite{weinstein1991surgery}, a Legendrian surgery diagram can present not only a contact manifold, but also a Weinstein domain filling it. 

\begin{proposition}[Weinstein handle attachment and removal]
Let $(W, \lambda, \phi)$ be a Weinstein domain with contact boundary $M := \partial W$. Let $\Lambda\subset M$ be a Legendrian sphere. 
\begin{enumerate}
    \item If $W'$ is obtained from $W$ by attaching a critical handle along $\Lambda$, then $M':= \partial W'$ is obtained from $M$ by contact-$(-1)$ surgery along $\Lambda$. 

    \item If $D\subset W$ is an embedded regular Lagrangian disk with $\partial D = \Lambda$ and $W'$ is obtained from $W$ by removing a critical handle with cocore $D$, then $M':= \partial W'$ is obtained from $M$ by contact-$(+1)$ surgery along $\Lambda$.  
\end{enumerate}
\end{proposition}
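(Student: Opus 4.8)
The plan is to reduce both statements to the standard local model of a critical Weinstein handle and then to read off the effect on the contact boundary as the model for contact surgery. Recall that the index-$n$ handle in dimension $2n$ is modeled on $(\R^{2n}=T^*\R^n, \omega_{\mathrm{std}})$ equipped with a Liouville vector field $Y$ (satisfying $\mathcal{L}_Y\omega_{\mathrm{std}}=\omega_{\mathrm{std}}$) whose unique zero is a hyperbolic critical point of the Weinstein function. One arranges $Y$ to contract along the base and expand in the cotangent fibers, for example $Y=2p\,\partial_p - q\,\partial_q$, so that the core is the Lagrangian disk $\{p=0\}$ and the cocore is the dual Lagrangian disk $\{q=0\}$. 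The attaching sphere $\{p=0\}\cap S^{2n-1}$ is then a Legendrian $(n-1)$-sphere in the standard contact sphere, and the belt sphere $\{q=0\}\cap S^{2n-1}$ is the Legendrian boundary of the cocore.

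First I would invoke the Legendrian neighborhood theorem to identify a neighborhood of $\Lambda\subset M$ with a neighborhood of the zero section of $J^1(S^{n-1})$, matching it to the attaching region $\partial_-H$ of the standard handle $H$. The hypothesis that $\Lambda$ is Legendrian is exactly what is needed to glue $H$ to $W$ along this neighborhood in the essentially unique symplectic manner (the Legendrian condition canonically fixes the attaching framing), so that $W'=W\cup H$ is again a Weinstein domain.

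Next I would analyze the boundary. Writing $\partial H=\partial_-H\cup\partial_+H$, attaching the handle replaces the attaching region by $\partial_+H$, so that $M'=(M\setminus\partial_-H)\cup\partial_+H$. The heart of the argument is to check, directly from the explicit Liouville form restricted to $\partial_+H$, that (i) smoothly this is surgery on $\Lambda$ along the Legendrian framing, and (ii) the induced contact structure is precisely the one defining contact-$(-1)$ surgery along $\Lambda$. I expect this matching step to be the main obstacle: it requires careful bookkeeping of the contact form on the reglued piece together with the gluing contactomorphism, and it is here that one must reconcile the handle model with whatever explicit prescription is taken as the definition of contact-$(-1)$ surgery. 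If that surgery is itself \emph{defined} as critical handle attachment, this step degenerates to a short consistency check; otherwise it is a genuine computation.

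Finally, for part (2) I would argue by reversing part (1). Since $D$ is a \emph{regular} Lagrangian disk with Legendrian boundary $\Lambda$, a Weinstein neighborhood of $D$ is symplectomorphic to the standard critical handle $H$ with cocore $D$; thus $W$ is recovered from $W':=W\setminus H$ by the attachment of part (1), and removing the handle undoes that attachment. Because the roles of the attaching and belt spheres are now exchanged, the regluing runs in the opposite direction: the inverse of the surgery gluing map negates the relevant twist, and so removal yields contact-$(+1)$ surgery along $\Lambda$ rather than $(-1)$.
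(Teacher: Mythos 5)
The paper offers no proof of this proposition at all: it is stated as background and attributed to \cite{weinstein1991surgery}, with the regularity caveat for part (2) deferred to the remark that follows it (citing \cite{eliashberg2018flexiblelagrangians,conway2021symplectic}). Your outline is correct and is essentially the argument of that cited source: reduce to the standard critical handle model, use the Legendrian neighborhood theorem to make the gluing canonical, and deduce part (2) from part (1) by exchanging the roles of the attaching and belt spheres. Two points would need care in a written-out version. First, the round sphere $S^{2n-1}$ is not transverse to $Y = 2p\,\partial_p - q\,\partial_q$ (transversality fails along $|q|^2 = 2|p|^2$), so the attaching and belt regions must be taken in hypersurfaces transverse to $Y$, e.g. level sets of the associated Morse function, as Weinstein does; this is a standard repair, not a conceptual issue. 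Second, you are right that the crux is definitional, and your honesty about it is the strongest part of the proposal: in the higher-dimensional setting of this paper, contact $(-1)$ surgery along a Legendrian sphere is essentially defined as the boundary effect of critical Weinstein handle attachment, and contact $(+1)$ surgery as the boundary effect of removing a handle whose cocore is a regular Lagrangian filling disk, so both parts reduce to consistency checks; only in dimension $3$, where Ding--Geiges define $(\pm 1)$ surgery by cut-and-paste on a solid torus, do your ``matching step'' and the cancellation behind part (2) require genuine computation. Relatedly, your phrase ``negates the relevant twist'' is $3$-dimensional language; the statement actually needed in all dimensions is the duality you invoke in the same sentence, namely that contact $(+1)$ surgery along the belt sphere $\Lambda = \partial D$ inverts contact $(-1)$ surgery along the attaching sphere, which is precisely how handle removal is matched to $(+1)$ surgery in the conventions this paper adopts from \cite{HH18,BHH23}.
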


\begin{remark}
One cannot perform a $(+1)$ surgery on an arbitrary Legendrian and expect the result to be Weinstein; the $(+1)$-surgered Legendrian must be filled by a regular Lagrangian disk (c.f. \cite{eliashberg2018flexiblelagrangians,conway2021symplectic}). See \cref{subsec:dec_diags} below.   
\end{remark}

Given a Weinstein handlebody diagram for a Weinstein domain, we can perform Weinstein homotopies via Legendrian isotopies of the attaching spheres, as well as Legendrian handleslides across surgeries. Local front models for handleslides across both $(-1)$ and $(+1)$ surgeries in all dimensions were established in \cite{casals2019fronts}, generalizing work of Ding-Geiges in dimension $3$ \cite{Ding2009HandleMI}, and are depicted in the bottom two rows of \cref{fig:slides}. We use the $\uplus$ operation of \cite{HH18} to describe handleslides notationally. This operation takes as input two Legendrians $\Lambda_1, \Lambda_2$ intersecting $\xi$-transversally at a single point, and produces a new Legendrian $\Lambda_1 \uplus \Lambda_2$, smoothly isotopic to their connected sum, whose front projection is given by the modified front projection in the top row of \cref{fig:slides}. Note that $\Lambda_1 \uplus \Lambda_2$ and $\Lambda_2 \uplus \Lambda_1$ are generally not Legendrian isotopic. Finally, we let $\Lambda^c$ denote a time-$c$ Reeb shift of $\Lambda$.  

\begin{figure}[ht]
	\begin{overpic}[scale=0.43]{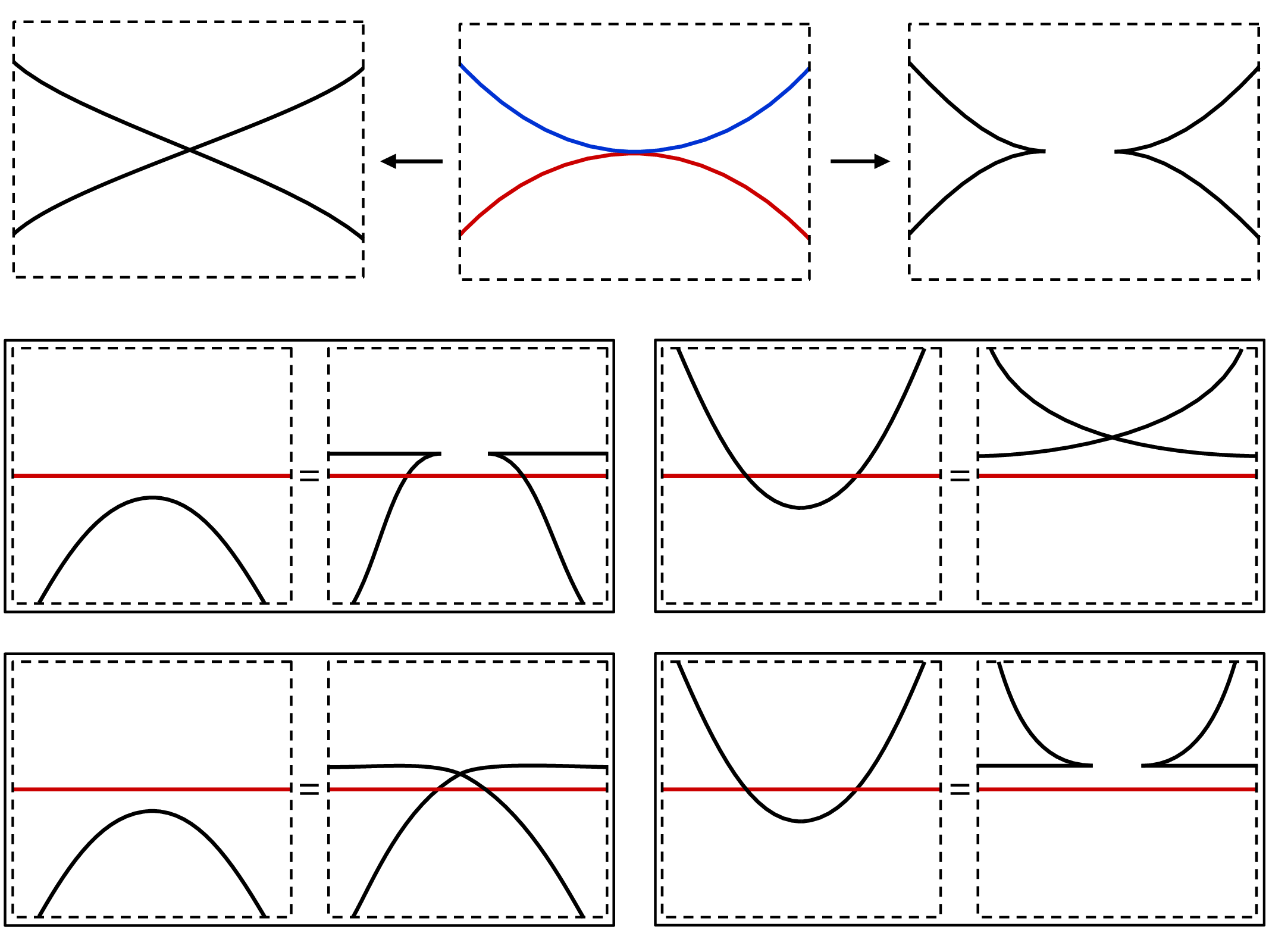}
	 \put(55,58){\small \textcolor{darkred}{$\Lambda_1$}}
  \put(55,67){\small \textcolor{darkblue}{$\Lambda_2$}}
  \put(81.5,67){\small $\Lambda_1 \uplus \Lambda_2$}
  \put(11,67){\small $\Lambda_2 \uplus \Lambda_1$}

  \put(18,35){\footnotesize \textcolor{darkred}{$(-1)$}}
  \put(43,35){\footnotesize \textcolor{darkred}{$(-1)$}}
  \put(69.25,35){\footnotesize \textcolor{darkred}{$(-1)$}}
  \put(94,35){\footnotesize \textcolor{darkred}{$(-1)$}}

  \put(18,10.25){\footnotesize \textcolor{darkred}{$(+1)$}}
  \put(43,10.25){\footnotesize \textcolor{darkred}{$(+1)$}}
  \put(69.25,10.25){\footnotesize \textcolor{darkred}{$(+1)$}}
  \put(94,10.25){\footnotesize \textcolor{darkred}{$(+1)$}}

  \put(2,10.25){\footnotesize \textcolor{darkred}{$\Lambda_0$}}
  \put(27,10.25){\footnotesize \textcolor{darkred}{$\Lambda_0$}}
  \put(53.25,10.25){\footnotesize \textcolor{darkred}{$\Lambda_0$}}
  \put(78,10.25){\footnotesize \textcolor{darkred}{$\Lambda_0$}}

  \put(2,35){\footnotesize \textcolor{darkred}{$\Lambda_0$}}
  \put(27,35){\footnotesize \textcolor{darkred}{$\Lambda_0$}}
  \put(53.25,35){\footnotesize \textcolor{darkred}{$\Lambda_0$}}
  \put(78,35){\footnotesize \textcolor{darkred}{$\Lambda_0$}}

   \put(10.5,45){\footnotesize $\Lambda^{-\ve}$} 
   \put(33,45){\footnotesize $(\Lambda \uplus \Lambda_0)^{\ve}$} 
   \put(61.5,45){\footnotesize $\Lambda^{-\ve}$} 
   \put(84,45){\footnotesize $(\Lambda \uplus \Lambda_0)^{\ve}$} 

   \put(10.5,20.5){\footnotesize $\Lambda^{-\ve}$} 
   \put(33,20.5){\footnotesize $(\Lambda_0 \uplus \Lambda)^{\ve}$} 
   \put(61.5,20.5){\footnotesize $\Lambda^{-\ve}$} 
   \put(84,20.5){\footnotesize $(\Lambda_0 \uplus \Lambda)^{\ve}$} 
  
	\end{overpic}
	\caption{Legendrian handleslides. The top row describes the $\uplus$ operation. The second row describes a handleslide of $\Lambda^{-\ve}$ up over a $(-1)$-surgery along $\Lambda_0$, and the third row is the same but with a $(+1)$-surgery along $\Lambda_0$. All diagrams are spun around their central vertical axis. There are additional models obtained by reflecting the bottom two rows across their central horizontal axes.}
	\label{fig:slides}
\end{figure}

\subsection{Contact handles and decorated diagrams}\label{subsec:dec_diags}

Let $(M^{2n+1}, \xi)$ be a contact manifold with Weinstein convex boundary $\Sigma = R_+ \cup \Gamma \cup R_-$. 

The first principle is that the calculus of contact handles of index at most $n$ is equivalent to the underlying Weinstein handle calculus. That is, a contact $n$-handle is modeled on the contactization of of a Weinstein $n$-handle, and its attaching data on a convex boundary hypersurface is a Legendrian ($n-1$)-sphere $\Lambda\subset \Gamma$ in the dividing set. The first item of the proposition below summarizes the effect of attachment. Contact ($n+1$)-handles are perhaps less familiar and are described by the second item of the same proposition. Notably, the attaching $S^n$-sphere of a contact ($n+1$)-handle is the union of two Lagrangian disks, one in $R_+$ and one in $R_-$, that meet along a common equatorial Legendrian sphere in the dividing set.

\begin{proposition}
Let $(M^{2n+1}, \xi)$ be a contact manifold with Weinstein convex boundary $\Sigma = R_+ \cup \Gamma \cup R_-$. 
\begin{enumerate}
    \item \cite[Proposition 3.1.4]{HH18} If $(M', \xi')$ is obtained by attaching a contact $n$-handle along $\Lambda\subset \Gamma$, the new convex boundary $\Sigma' = R_+' \cup \Gamma' \cup R_-'$ is obtained by simultaneous critical Weinstein handle attachment to $R_{\pm}$ along $\Lambda$. Consequently, the new dividing set $\Gamma'$ is obtained from $\Gamma$ by performing contact-$(-1)$ surgery along $\Lambda$. In particular, attaching a contact $n$-handle to a contact handlebody amounts to attaching a Weinstein $n$-handle to the underlying Weinstein domain.
    \item \cite[Proposition 3.2.6]{HH18} Let $D_{\pm}\subset R_{\pm}$ be regular Lagrangian disks such that $\partial D_+ = \Lambda = \partial D_-$. Let $(M', \xi')$ be obtained by attaching a contact ($n+1$)-handle along the $n$-sphere $D_- \cup D_+$ and let $\Sigma' = R_+' \cup \Gamma' \cup R_-$ be the new convex boundary. Then $R_{\pm}'$ is obtained by removing a neighborhood of $D_{\pm}$ from $R_{\pm}$, and consequently $\Gamma'$ is obtained by performing contact $(+1)$-surgery along $\Lambda$. 
\end{enumerate}
\end{proposition}

Although the data of a contact $n$-handle attachment is completely determined in a surgery diagram for the dividing set by its Legendrian attaching sphere, the equatorial Legendrian sphere in the dividing set is not a sufficient amount of data to specify a contact ($n+1$)-handle attachment; we need to keep track of the disk fillings in both $R_+$ and $R_-$. This is a nontrivial task when carrying out Kirby calculus in the dividing set. 

\begin{figure}[ht]
	\begin{overpic}[scale=0.2]{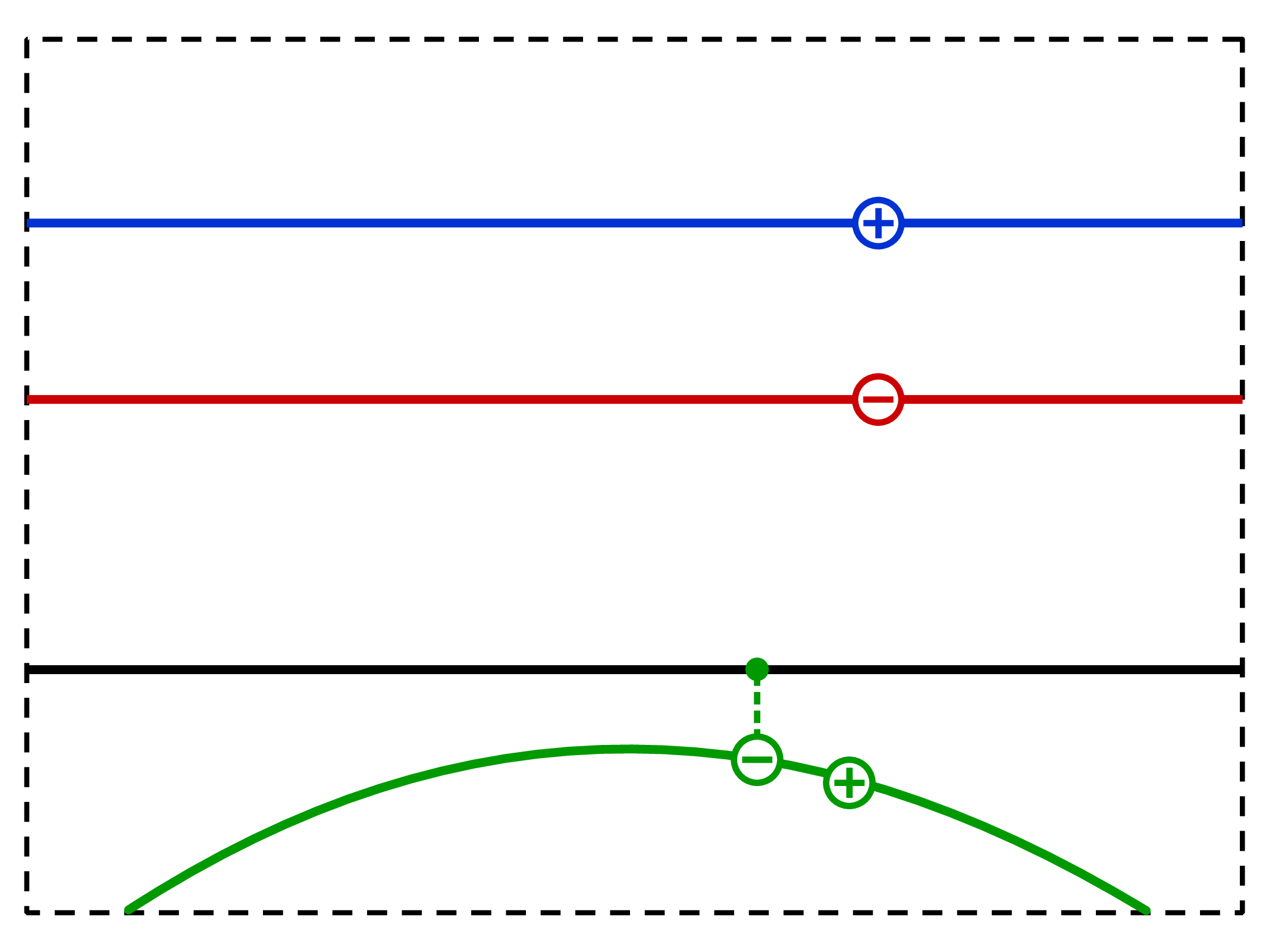}
	 \put(50,-2){\small $\Gamma'$}
     \put(93,5){\small $\Gamma$}
     \put(87,26){\scriptsize $(-1)$}

    \put(4,25){\small $\Lambda_0$}
    \put(4,46){\small \textcolor{darkred}{$\Lambda_-$}}
    \put(4,60){\small \textcolor{darkblue}{$\Lambda_+$}}
    \put(5,6){\small \textcolor{darkgreen}{$\Lambda_1$}}

     \put(75,14){\scriptsize \textcolor{darkgreen}{$(+1)$}}
	\end{overpic}
	\caption{A convex hypersurface $\Sigma' = R_+' \cup \Gamma' \cup R_-'$ is obtained from $\Sigma = R_+ \cup \Gamma \cup R_-$ by attaching a contact $n$-handle along $\Lambda_0$ and a contact ($n+1$)-handle along a sphere with equator $\Lambda_1$. The negative disk filling of $\Lambda_1$ intersects the cocore of the negative Weinstein $n$-handle attached along $\Lambda_0$. The Legendrian $\Lambda_{\pm}$ has a preferred disk filling in $R_{\pm}$, and no surgery or handle attachment is being performed.}
	\label{fig:dec}
\end{figure}

To aid with the need to track additional data in a surgery presentation of the dividing set, we adopt the conventions of \cite{BHH23}, presented below, regarding diagrammatic decorations associated to contact ($n+1$)-handles, though we emphasize that the decorations alone are still not sufficient to determine the handle attachment.

\begin{enumerate}
    \item If a Legendrian sphere $\Lambda\subset \Gamma$ in the dividing set is filled by a preferred Lagrangian disk in $R_+$, we decorate the strand representing $\Lambda$ with a circular $(+)$-node. Likewise, we use a circular $(-)$-node to indicate a preferred disk filling in $R_-$.\footnote{This is meant to be reminiscent of dotted circle notation for $1$-handles in $4$-dimensional Kirby calculus.}

    \item Contact ($n+1$)-handle attachment along a sphere with equator $\Lambda \subset \Gamma$ is indicated by the presence of both a $(+)$-node and $(-)$-node together with a contact-$(+1)$ surgery coefficient.

    \item If there is a contact $n$-handle present in the diagram, and a preferred Lagrangian disk filling in $R_{\pm}'$ has nonzero geometric intersection with the cocore of the associated Weinstein $n$-handle attached to $R_{\pm}$, a dashed line connects the $(\pm)$-node with the $n$-handle attaching sphere. 
\end{enumerate}
See \cref{fig:dec} for general examples of these conventions, and \cref{ex1} for a concrete example.

\begin{example}\label{ex1}
For $n\geq 1$, let $\D^{2n}$ be the standard Weinstein structure on the ball with one critical point, so that $\partial \D^{2n}$ is the standard tight contact $S^{2n-1}$. Attaching a Weinstein $n$-handle along the standard Legendrian unknot $\Lambda\subset S^{2n-1}$ gives the standard Weinstein structure on $\D^*S^n$, the unit disk cotangent bundle. Let $H=[-1,1]\times \D^*S^n$ be the contact handlebody based over $\D^*S^n$, so that $R_{\pm}$ of $\partial H$ is identified with $\D^*S^n$.

\begin{figure}[ht]
	\begin{overpic}[scale=0.43]{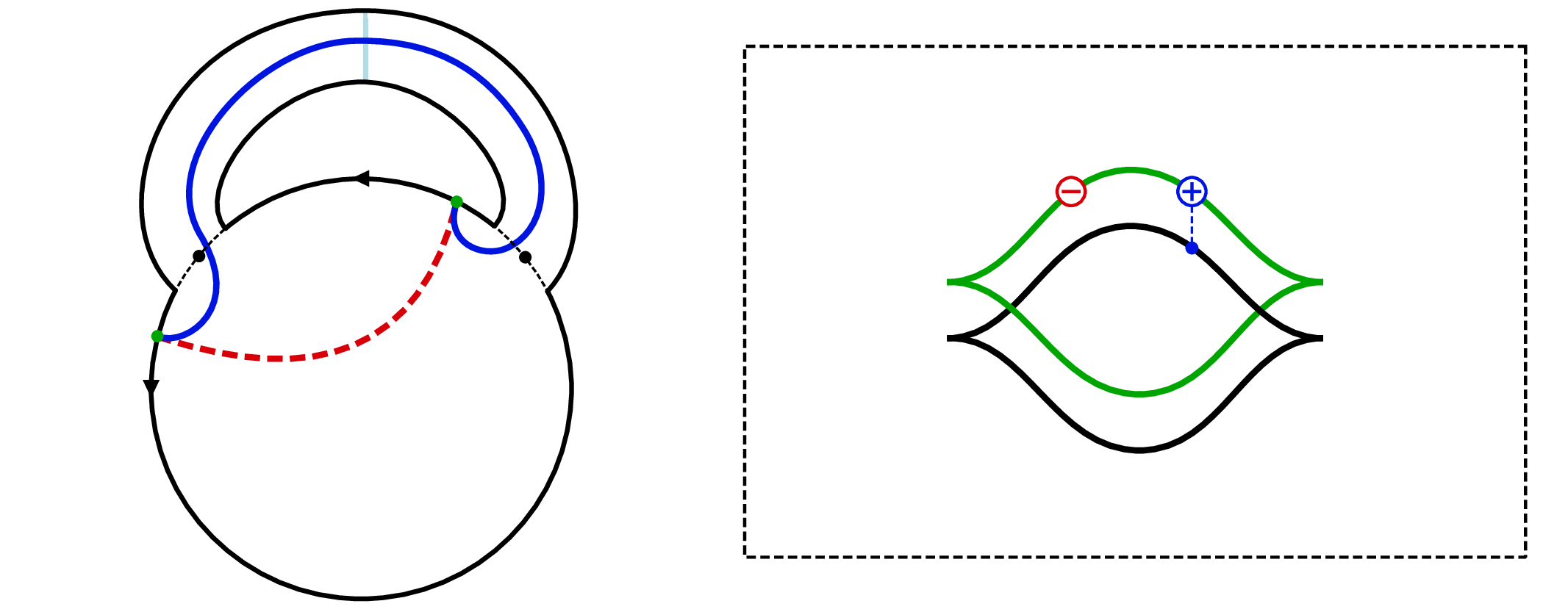}
	 \put(71,0.5){\small $S^{2n-1}$}
     \put(90.5,5){\small $S^{2n-1}$}

    \put(57,16.75){\small $\Lambda$}
    \put(22,13.5){\small \textcolor{darkred}{$D_-$}}
    \put(15,20){\small \textcolor{darkblue}{$D_+$}}
    \put(57,20.25){\small \textcolor{darkgreen}{$\Lambda'$}}

     \put(86,20.5){\scriptsize \textcolor{darkgreen}{$(+1)$}}
     \put(86,17){\scriptsize $(-1)$}
	\end{overpic}
	\caption{Diagrams for \cref{ex1}. On the left is the $n=1$ case; we visualize the abstract contact handlebody $[-1,1]\times \D^*S^1$ as having convex boundary with $R_{\pm}$ on the front (resp. back) of the page. On the right is the $n>1$ case.}
	\label{fig:example1}
\end{figure}

We can attach a contact ($n+1$)-handle to $\partial H$ with the following data. Let $\Lambda'$ be a small positive-Reeb shift of $\Lambda$ in the dividing set $\Gamma = \partial \D^*S^n$. Note that $\Lambda'$ is also a standard unknot in $S^{2n-1}$, hence it admits a standard Lagrangian disk filling $D_-$ in the $0$-handle of $R_-$. Then we observe that the belt sphere of the Weinstein $n$-handle in $R_+$ is isotopic in $\Gamma$ to $\Lambda'$; we let $D_+$ be the image of the cocore under a Hamiltonian isotopy in $R_+$ that induces such an isotopy in the dividing set. 

Let $(M, \xi)$ be the resulting contact manifold obtained by attaching a contact ($n+1$)-handle to $H$ along $D_+ \cup D_-$. This handle smoothly cancels the $n$-handle, so $M$ is diffeomorphic to a ball. The contact type of the dividing set of $\partial M = S^{2n}$ is given by the surgery diagram in \cref{fig:example1}. The surgeries cancel, meaning the dividing set is a copy of standard tight $S^{2n-1}$. We will see below (c.f. \cref{lemma:trivial_bypass_lemma}) that, in fact, the contact handles cancel, but this is not obvious at this stage. 
\end{example}

\subsection{Bypass attachments}\label{subsec:Bypass}

Bypass attachments in higher dimensional contact topology were first introduced in \cite{HH18}. This subsection adopts much of the preliminary material from \cite[\S 2]{BHH23}.

\begin{definition}\label{def:bypass}
Let $\Sigma$ be a convex hypersurface with convex decomposition $\Sigma = R_+ \cup \Gamma \cup R_-$. \textbf{Bypass attachment data} is a tuple $(\Lambda_{-}, \Lambda_+; D_{-}, D_+)$ where 
$D_{\pm}\subset R_{\pm}$ is a properly embedded Lagrangian disk filling of $\Lambda_{\pm}\subset\Gamma$ such that $\Lambda_-$ and $\Lambda_+$ intersect $\xi_{\Gamma}$-transversely at one point; see the middle panel of \cref{fig:bypass_attachment}. The bypass attachment data is \textbf{trivial} if either of the following holds: 
\begin{itemize}
    \item[(TB1)] $(\Lambda_+; D_+)$ is a pair consisting of a standard Legendrian unknot and standard Lagrangian disk such that $\Lambda_+$ is \textbf{below} $\Lambda_-$, by which we mean that for every $\ve > 0$,  $\Lambda_+^{-\ve}$ bounds an $n$-dimensional Seifert ball $\Theta^{-\ve}$ in a Darboux ball such that $\Lambda_- \cap \Theta^{-\ve} = \emptyset$. When $n=1$, this means that $D_+ \cup \Gamma$ forms a bigon in $R_+$ whose upper corner (measured with respect to the Reeb direction of $\Gamma$) is $\Lambda_+ \cap \Lambda_-$; see the top left panel of \cref{fig:trivPOBDstab}.
    \item[(TB2)] $(\Lambda_-; D_-)$ is a pair consisting of a standard Legendrian and standard Lagrangian disk such that $\Lambda_-$ is \textbf{above} $\Lambda_+$, defined analogously.
\end{itemize}
See \autoref{fig:trivial_data}. A bypass attachment with trivial bypass data is also called \textbf{trivial}.   
\end{definition}

 \begin{figure}[ht]
	\begin{overpic}[scale=0.4]{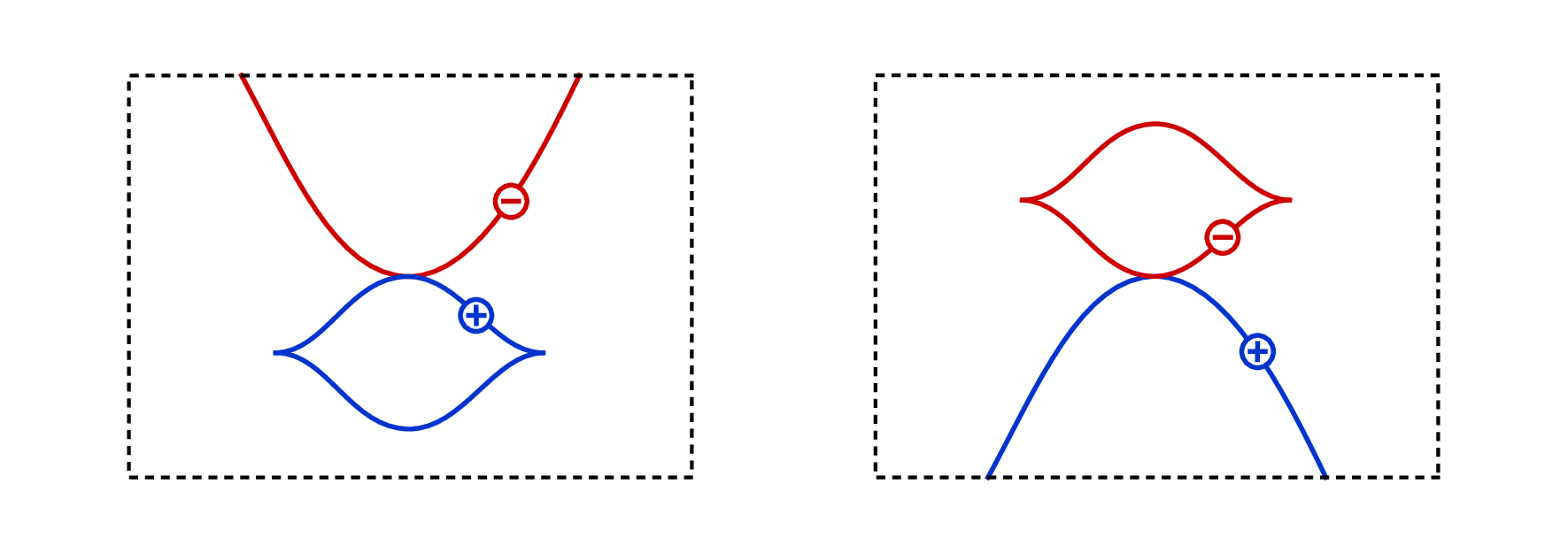}
	    \put(23,31.5){\small (TB1)}
	    \put(25,2){\small $\Gamma$}
	    \put(41,6){\small $\Gamma$}
	    \put(71,31.5){\small (TB2)}
	    \put(73,2){\small $\Gamma$}
	    \put(89,6){\small $\Gamma$}
	\end{overpic}
	\caption{Trivial bypass data; figure adopted from \cite{BHH23}.}
	\label{fig:trivial_data}
\end{figure}

The following theorem specifies the meaning of \textit{bypass attachment} as a pair of contact handles. It is a combination of \cite[Theorem 5.1.3, Proposition 8.3.2]{HH18}.

\begin{theorem}[Bypass attachment]\label{theorem:bypass_attachment}
Let $\Sigma^{2n} = R_+ \cup \Gamma \cup R_-$ be convex with bypass attachment data $(\Lambda_{-}, \Lambda_+; D_{-}, D_+)$. Then a pair of smoothly canceling contact $n$- and ($n+1$)-handles can be attached according to either of the following models:
\begin{enumerate}
    \item[($R_+$)] Attach a contact $n$-handle along $\Lambda_- \uplus \Lambda_+$ and a contact ($n+1$)-handle along $\tilde{D}_- \cup \Lambda_+^{-\ve} \cup D_+^{-\ve}$, where $\tilde{D}_-$ is obtained by sliding $D_-^{\ve}$ down across the $(-1)$-surgery along $\Lambda_- \uplus \Lambda_+$. 
    \item[($R_-$)] Attach a contact $n$-handle along $\Lambda_- \uplus \Lambda_+$ and a contact ($n+1$)-handle along $D_-^{\ve} \cup \Lambda_-^{\ve} \cup \tilde{D}_+$, where $\tilde{D}_+$ is obtained by sliding $D_+^{-\ve}$ up across the $(-1)$-surgery along $\Lambda_- \uplus \Lambda_+$. 
\end{enumerate}
The two models, called the {\em $R_{\pm}$-centric models,} respectively, are identified by a handleslide of the ($n+1$)-handle across the $n$-handle; see \autoref{fig:bypass_attachment}. Moreover, if the bypass attachment data is trivial, the resulting contact cobordism $\Sigma \times [0,1]$ has a vertically invariant contact structure.
\end{theorem}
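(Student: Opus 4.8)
The plan is to assemble the statement from the two contact-handle attachment propositions of \cref{subsec:dec_diags} together with the handleslide models of \cref{fig:slides}, thereby recovering the combination of \cite[Theorem 5.1.3, Proposition 8.3.2]{HH18}. I would split the argument into three parts: (i) construct the two handles from the bypass data and verify that they cancel \emph{smoothly}; (ii) identify the $(R_+)$- and $(R_-)$-centric models by a handleslide; and (iii) upgrade smooth cancellation to vertical invariance in the trivial case.

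For (i), since $\Lambda_-$ and $\Lambda_+$ meet $\xi_\Gamma$-transversely at a single point, the Legendrian connected sum $\Lambda_- \uplus \Lambda_+ \subset \Gamma$ is defined, and I attach a contact $n$-handle along it. By the contact $n$-handle proposition this performs contact $(-1)$-surgery on $\Gamma$ and attaches Weinstein $n$-handles to both $R_\pm$. I then attach a contact $(n+1)$-handle along the $n$-sphere prescribed by the $(R_+)$-centric data, namely $\tilde D_- \cup \Lambda_+^{-\ve} \cup D_+^{-\ve}$, where $\tilde D_-$ is obtained by sliding $D_-^{\ve}$ down across the $(-1)$-surgery; the contact $(n+1)$-handle proposition confirms this is a legitimate attachment with equator the Reeb shift $\Lambda_+^{-\ve}$ and Lagrangian disk fillings in $R_\pm$. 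The point to verify is that this $(n+1)$-handle cancels the $n$-handle smoothly, i.e.\ that its attaching $S^n$ meets the belt $S^n$ of the $n$-handle transversely in exactly one point. Here the Reeb shifts and the handleslide of $D_-^{\ve}$ are arranged precisely so that the slid disk $\tilde D_-$ meets the cocore of the negative $n$-handle transversely in a single point (recorded by the dashed-line decoration of \cref{subsec:dec_diags}), which is the unique transverse cancellation intersection. The $(R_-)$-centric model is built symmetrically, sliding $D_+^{-\ve}$ up across the surgery instead. The $\pm\ve$ Reeb bookkeeping is routine.

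For (ii), the two models differ only in which disk, $D_-$ or $D_+$, is slid across the $(-1)$-surgery carrying the $n$-handle. Applying the local front handleslide of the $(n+1)$-handle's attaching disk across the $n$-handle surgery --- i.e.\ the $(-1)$-surgery model in the second row of \cref{fig:slides} --- carries the $(R_+)$-centric attaching sphere to the $(R_-)$-centric one. Because a handleslide is realized by a Legendrian isotopy together with the $\uplus$-modification, it induces a contactomorphism of the resulting cobordisms, so the two models agree.

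Part (iii) is the step I expect to be the main obstacle. In the trivial case I must promote the smooth cancellation above to a \emph{contact-geometric} cancellation, showing the completed cobordism is contactomorphic to $\Sigma \times [0,1]$ with a vertically invariant contact structure. For (TB1), $(\Lambda_+; D_+)$ is a standard Legendrian unknot with standard Lagrangian disk lying below $\Lambda_-$; this standardness should let me produce an explicit contact isotopy that slides the positive disk of the $(n+1)$-handle off $\Lambda_-$ into canceling position against the $n$-handle through a contact isotopy, rather than merely smoothly, after which the canceling pair is absorbed into a collar and the contact form straightened to a vertically invariant one. The case (TB2) is symmetric using the $(R_-)$-centric model. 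The delicate point is exactly that transversality of the belt and attaching spheres only yields smooth cancellation, whereas vertical invariance requires the stronger contact cancellation, and it is precisely the standard-unknot/standard-disk hypothesis that supplies the rigid local model making this possible; this is the content of \cite[Proposition 8.3.2]{HH18}.
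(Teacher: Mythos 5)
First, a point of comparison: the paper does not prove this theorem at all --- it is imported wholesale from Honda--Huang, cited as the combination of \cite[Theorem 5.1.3, Proposition 8.3.2]{HH18} --- so the benchmark for your proposal is the argument in that reference, not anything in the present paper. With that understood, your parts (i) and (ii) are reasonable sketches of the construction: you correctly identify that well-definedness of the $(n+1)$-handle attachment amounts to checking that the slid disk $\tilde{D}_-$ and $D_+^{-\ve}$ are Lagrangian disks in the new $R_\pm$ sharing the equator $\Lambda_+^{-\ve}$ in the surgered dividing set, and that smooth cancellation follows from the single transverse intersection of $\tilde{D}_-$ with the cocore of the negative Weinstein $n$-handle (with $D_+^{-\ve}$ disjoint from the positive cocore). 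One caveat for (ii): as the paper stresses in \cref{subsec:dec_diags}, the equatorial Legendrian alone does not determine a contact $(n+1)$-handle attachment, so the handleslide identification must carry the \emph{pair of Lagrangian disk fillings} of one model to those of the other; your argument tracks only the front-projection/Legendrian level, which is not quite enough as stated.

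The genuine gap is part (iii), which you flag as the main obstacle and then do not close. Asserting that standardness of $(\Lambda_+;D_+)$ ``should let me produce an explicit contact isotopy'' into canceling position is a restatement of the goal, not an argument: the entire difficulty --- emphasized in the paper's introduction, where it is noted that handles which cancel smoothly need not cancel at the level of contact structures --- is that there is no general cancellation criterion for contact handle pairs, and the vertically invariant structure on $\Sigma\times[0,1]$ must be produced by an explicit model argument. Concluding with ``this is the content of \cite[Proposition 8.3.2]{HH18}'' is circular in a blind proof of this statement, since that proposition \emph{is} the clause you are trying to prove. To repair it you would need to reproduce an actual argument in the local model of trivial data: for (TB1), observe that the positive disk of the $(n+1)$-handle is a Reeb pushoff of the cocore of the $n$-handle just attached to $R_+$ (this is exactly the configuration isolated in \cref{lemma:trivial_bypass_lemma}), show that the $(n+1)$-handle therefore digs out of $R_\pm$ precisely what the $n$-handle added, and then verify that the convex decomposition of the outgoing hypersurface is canonically identified with that of the incoming one, so that the cobordism is contact-isotopic rel boundary to the invariant one. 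Without this, your proposal establishes only the smooth cancellation and (modulo the disk-tracking caveat) the equivalence of the two models, not the final --- and, for the rest of the paper, most heavily used --- clause of the theorem.
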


\begin{figure}[ht]
	\begin{overpic}[scale=0.51]{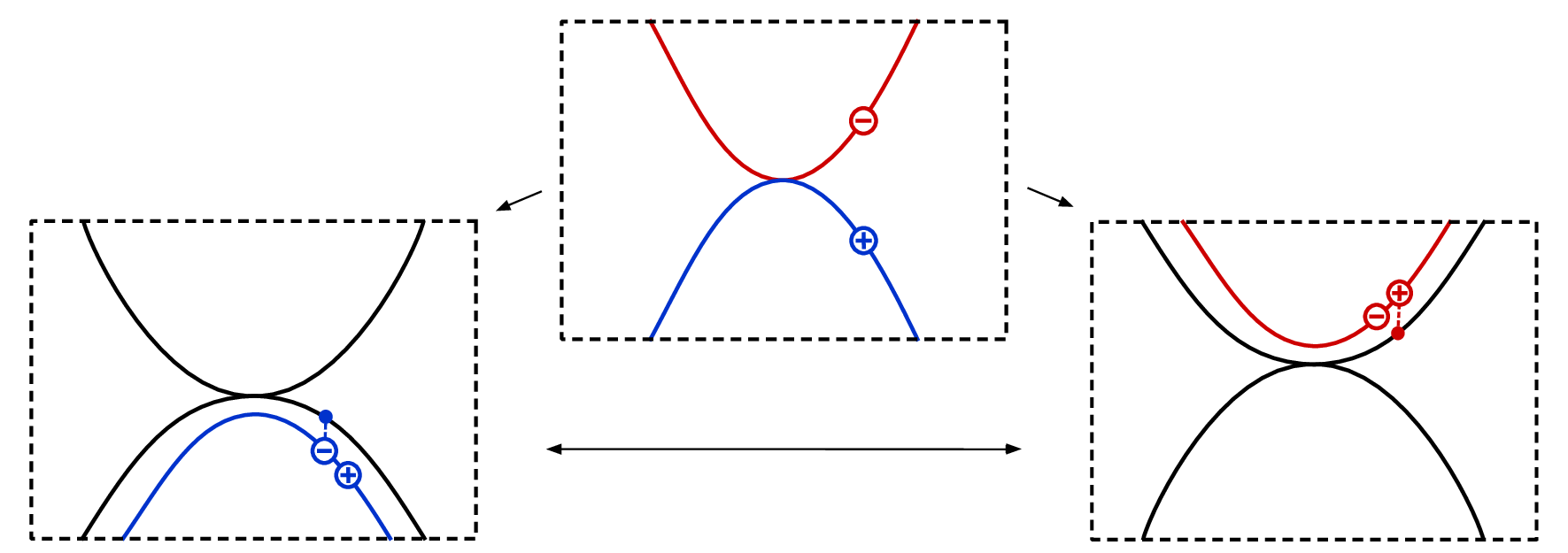}
	    \put(36.75,35){\small (Bypass attachment data)}
	    \put(49.5,12){\tiny $\Gamma$}
	    \put(62.5,14.5){\tiny $\Gamma$}
	    \put(45,29.5){\small {\color{darkred}  $\Lambda_-$}}
	    \put(45,18){\small \textcolor{darkblue}{$\Lambda_{+}$}}
	    \put(78,22){\small ($R_-$-centric)}
	    \put(83,-1){\tiny $\Gamma'$}
	    \put(96,1.5){\tiny $\Gamma$}
	    \put(92.75,12){\tiny $(-1)$}
	    \put(92.75,15){\tiny {\color{darkred}  $(+1)$}}
	    \put(79.5,17){\small {\color{darkred}  $\Lambda_-^{\ve}$}}
	    \put(79,8){\small $\Lambda_- \uplus \Lambda_+$}
	    \put(10.5,22){\small ($R_+$-centric)}
	    \put(15,-1){\tiny $\Gamma'$}
	    \put(28.5,1.5){\tiny $\Gamma$}
	    \put(25,10){\tiny $(-1)$}
	    \put(25,7){\tiny \textcolor{darkblue}{$(+1)$}}
	    \put(12,3.5){\small \textcolor{darkblue}{$\Lambda_+^{-\ve}$}}
	    \put(11.5,13.5){\small $\Lambda_- \uplus \Lambda_+$}
		\put(45,3.75){\small Handleslide}	
	\end{overpic}
	\caption{Two equivalent models of a bypass attachment; figure adopted from \cite{BHH23}.}
	\label{fig:bypass_attachment}
\end{figure}

The following lemma lets us identify handle pairs that form trivial bypasses in a decorated Legendrian diagram. 

\begin{lemma}\cite[Lemma 2.5.3]{BHH23}\label{lemma:trivial_bypass_lemma}
Let $\Sigma^{2n} = R_+ \cup \Gamma \cup R_-$ be the convex boundary of $(M^{2n+1}, \xi)$. Let $h_n$ be a contact $n$-handle attached to $M$ along $\Lambda_n \subseteq \Gamma$, and let $h_{n+1}$ be a contact ($n+1$)-handle attached to $M \cup h_n$ along $D_+ \cup \Lambda_{n+1} \cup D_-$. Then the pair of handles $h_n \cup h_{n+1}$ forms a trivial bypass if either of the following conditions hold; see \autoref{fig:trivialbypasslemma}:
\begin{itemize}
    \item[(TB1)] $D_+$ is a positive Reeb shift of the cocore of the $n$-handle attached to $R_+$, and $D_- \subseteq R_-$. 
    \item[(TB2)] $D_-$ is a negative Reeb shift of the cocore of the $n$-handle attached to $R_-$, and $D_+ \subseteq R_+$.
\end{itemize}
\end{lemma}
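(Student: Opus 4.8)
The plan is to recognize each handle configuration in the lemma as one of the $R_\pm$-centric models of a bypass attachment (\cref{theorem:bypass_attachment}) arising from \emph{trivial} bypass attachment data in the sense of \cref{def:bypass}, and then to invoke the final sentence of \cref{theorem:bypass_attachment}: trivial data yields a vertically invariant contact cobordism, which is precisely the assertion that $h_n\cup h_{n+1}$ is a trivial bypass. So the whole content is a recognition argument, matching the post-attachment data $(\Lambda_n;\, D_+\cup\Lambda_{n+1}\cup D_-)$ read off a decorated diagram to the pre-attachment data $(\Lambda_-,\Lambda_+;\, D_-,D_+)$ of \cref{def:bypass}, and verifying that the hypotheses of (TB1)/(TB2) force this data to be trivial.

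I would treat case (TB1) via the $R_+$-centric model; case (TB2) is symmetric via the $R_-$-centric model. First I would reverse-engineer the bypass data. The $R_+$-centric model attaches a contact $n$-handle along $\Lambda_-\uplus\Lambda_+$ and a contact $(n+1)$-handle along $\tilde{D}_-\cup\Lambda_+^{-\ve}\cup D_+^{-\ve}$, where $\tilde{D}_-$ is the downward slide of $D_-^{\ve}$ across the $(-1)$-surgery. I therefore set $\Lambda_n:=\Lambda_-\uplus\Lambda_+$ and identify the lemma's positive disk $D_+$ with $D_+^{-\ve}$, its equator $\Lambda_{n+1}$ with $\Lambda_+^{-\ve}$, and its negative disk $D_-$ with $\tilde{D}_-$. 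The crux is a geometric translation: the belt sphere of the contact $n$-handle, pushed off the dividing set by a small positive Reeb shift, is a standard Legendrian unknot bounding the cocore of the handle in $R_+$ in the standard way, and it sits below $\Lambda_-$ in the front projection. Consequently the hypothesis of (TB1), that $D_+$ is a positive Reeb shift of the cocore of the $n$-handle in $R_+$, is exactly the statement that $(\Lambda_+;D_+)$ is a standard unknot together with its standard Lagrangian disk, with $\Lambda_+$ below $\Lambda_-$ --- i.e., (TB1) of \cref{def:bypass}. The complementary hypothesis $D_-\subseteq R_-$ certifies that the negative disk is an honest, unslid filling in $R_-$, consistent with $\tilde{D}_-$ once the downward handleslide is undone.

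Having identified trivial bypass attachment data, I would conclude immediately from \cref{theorem:bypass_attachment} that the cobordism $\Sigma\times[0,1]$ carries a vertically invariant contact structure, so $h_n\cup h_{n+1}$ forms a trivial bypass.

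The hard part will be the geometric translation in the second paragraph: verifying that ``$D_+$ is a positive Reeb shift of the cocore of the $n$-handle'' coincides, after accounting for the $\uplus$-decomposition of $\Lambda_n$ and the small $(\pm\ve)$ Reeb shifts built into the $R_+$-centric model, with the standard-unknot-below condition of (TB1). This amounts to pinning down the local front model of a cocore disk and checking both that it caps off the belt sphere in the standard way and that undoing the slide producing $\tilde{D}_-$ is possible exactly when $D_-\subseteq R_-$. The $R_-$-centric/(TB2) case then follows from the up-down symmetry of \cref{theorem:bypass_attachment} and \cref{def:bypass}.
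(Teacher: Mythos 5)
The paper itself offers no proof of this lemma --- it is imported verbatim from \cite[Lemma 2.5.3]{BHH23} --- so your proposal can only be judged against the mechanics the paper sets up in \cref{def:bypass} and \cref{theorem:bypass_attachment}. Your overall strategy (reverse-engineer trivial bypass attachment data, realize the given handle pair as one of the two centric models applied to that data, and invoke the final sentence of \cref{theorem:bypass_attachment}) is the right one. However, your pairing of lemma cases with models is backwards, and this breaks the crux of the argument --- exactly the step you flag as ``the hard part.'' In the $R_+$-centric model the positive disk of the $(n+1)$-handle is $D_+^{-\ve}$, a small Reeb shift of a disk $D_+$ lying in the \emph{original} $R_+$; such a disk is disjoint from the newly attached Weinstein $n$-handle, so it can never be a Reeb shift of that handle's cocore. (Its boundary $\Lambda_+^{-\ve}$ is isotopic to the belt sphere only if $\Lambda_+\simeq\Lambda_-\uplus\Lambda_+$, i.e.\ only if $\Lambda_-$ is a standard unknot --- which is the (TB2) condition of \cref{def:bypass}, not (TB1).) The disk that \emph{does} cross the handle in the $R_+$-centric model is the negative one, $\tilde D_-$, since it is produced by the slide; so identifying the lemma's $D_-\subseteq R_-$ (a disk avoiding the handle) with $\tilde D_-$ is likewise contradictory, and ``undoing the slide'' is not an innocent fix --- it changes the attaching sphere of the $(n+1)$-handle, which is legitimate only via the handleslide equivalence of \cref{theorem:bypass_attachment}, and carrying that out lands you precisely in the other model.

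The correct matching is: the lemma's (TB1) configuration is the \emph{$R_-$-centric} attachment of Definition-(TB1) trivial data, and the lemma's (TB2) configuration is the \emph{$R_+$-centric} attachment of Definition-(TB2) trivial data. Indeed, in the $R_-$-centric model the negative disk $D_-^{\ve}$ automatically satisfies $D_-\subseteq R_-$, while the positive disk $\tilde D_+$ is the slide of $D_+^{-\ve}$ up across the $(-1)$-surgery on $\Lambda_-\uplus\Lambda_+$; when $(\Lambda_+;D_+)$ is a standard unknot with its standard disk below $\Lambda_-$, the slid sphere $(\Lambda_+\uplus(\Lambda_-\uplus\Lambda_+))^{\ve}$ is an upward pushoff of the attaching sphere, hence isotopic to the belt sphere, and the slid disk is isotopic to a positive Reeb shift of the cocore --- exactly the lemma's (TB1) hypothesis. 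This pairing is also what the paper's figures encode: in \cref{fig:trivialbypasslemma} the (TB1) panel has the $(+1)$-equator \emph{above} the $(-1)$-sphere, matching the $R_-$-centric panel of \cref{fig:bypass_attachment}, while the (TB2) panel has it below, matching the $R_+$-centric panel. With the pairing corrected, the remaining substantive work is the two geometric facts just used (pushoff of the attaching sphere $=$ belt sphere after surgery; slide of a standard small unknot's disk $=$ cocore), plus the observation that these identifications can be reversed to produce the trivial data from the given handle pair; your write-up as it stands asserts an identification that is false on the nose.
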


\begin{figure}[ht]
	\begin{overpic}[scale=0.4]{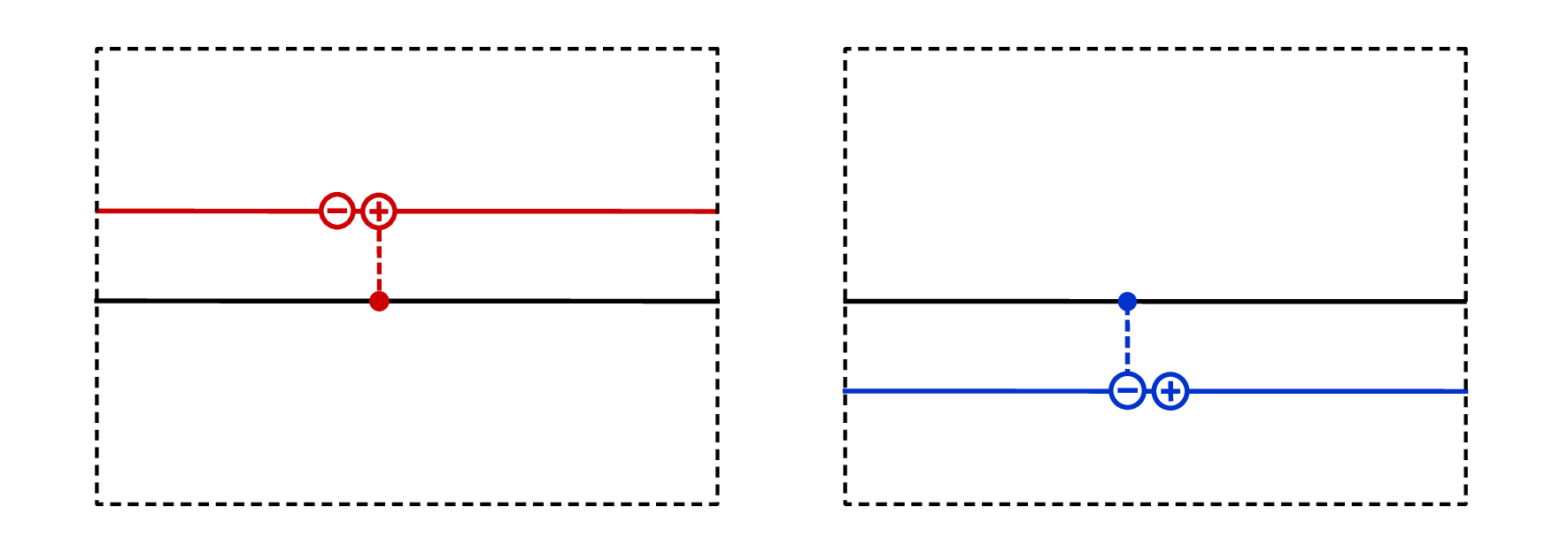}
	    \put(23,33){\small (TB1)}
	    \put(25,0.5){\small $\Gamma'$}
	    \put(43,4){\small $\Gamma$}
	    \put(34,12.5){\small $\Lambda_n$}
	    \put(34,24){\small {\color{darkred}  $\Lambda_{n+1}$}}
	    \put(46.5,15.5){\tiny $(-1)$}
	    \put(46.5,21.5){\tiny {\color{darkred}  $(+1)$}}
	    \put(71,33){\small (TB2)}
	    \put(73,0.5){\small $\Gamma'$}
	    \put(91,4){\small $\Gamma$}
	    \put(94,15.5){\tiny $(-1)$}
	    \put(94,10){\tiny \textcolor{darkblue}{$(+1)$}}
	    \put(82,18){\small $\Lambda_n$}
	    \put(82,7){\small \textcolor{darkblue}{$\Lambda_{n+1}$}}
	\end{overpic}
	\caption{The two trivial bypasses described in \cref{lemma:trivial_bypass_lemma}; figure adopted from \cite{BHH23}.}
	\label{fig:trivialbypasslemma}
\end{figure}

\subsection{Open book decompositions}\label{subsec:OBD}

Here we define (partial) open book decompositions. In dimension $3$, an \textit{open book decomposition} of $(M^3, \xi)$ is a choice of transverse fibered link, and admits an abstract description as the relative mapping torus of the fiber monodromy. Giroux's work in  \cite{Giroux2002GeometrieDC} crystallized the strong connection between contact structures and open book decompositions; we refer to \cite{etnyre2006lectures,geiges2008introduction,licata2023heegaard} for more details on open books, particularly in dimension $3$. In higher dimensions, the relevant definition is as follows. 

\begin{definition}\label{def:obd}
Let $(M^{2n+1}, \xi)$ be a closed contact manifold. An \textbf{open book decomposition} of $(M, \xi)$ is a pair $(B, \pi)$ where 
\begin{enumerate}
    \item $B^{2n-1}\subset M$ is a codimension-$2$ contact submanifold called the \textbf{binding},
    \item $\pi: M \setminus B \to S^1$ is a fibration that agrees with an angular coordinate in a tubular neighborhood $B\times D^2$ of $B$, and 
    \item there is a Reeb vector field $R_{\alpha}$ for $\xi$ everywhere transverse to each \textbf{page} $\pi^{-1}(\theta)$. 
\end{enumerate}
If $(\pi^{-1}(\theta), \alpha\vert_{\pi^{-1}(\theta)})$ admits the structure of a completed $1$-Weinstein domain (i.e., allowing for birth-death critical points) for every $\theta\in S^1$, we say that the open book decomposition is \textbf{strongly Weinstein}.
\end{definition}

Fixing $\theta_0\in S^1$, the monodromy $\psi$ of the fibration $\pi$ is an exact symplectomorphism of $(\pi^{-1}(\theta_0), \alpha\vert_{\pi^{-1}}(\theta_0))$, i.e., $\psi^*\alpha\vert_{\pi^{-1}}(\theta_0) = \alpha\vert_{\pi^{-1}}(\theta_0) + df$, which is the identity near the binding. This gives rise to an \textit{abstract open book}, i.e. a pair $(W, \psi)$ where $W$ is a Liouville domain and $\psi:W \to W$ is an exact symplectomorphism that is the identity in a neighborhood of $\partial W$. The relative mapping torus of the pair $(W, \psi)$ produces a closed contact manifold that we denote $\mathcal{OB}(W,\psi)$, together with an open book decomposition in the sense of \cref{def:obd}.

\begin{remark}
It is unclear when, in general, a pair $(W, \psi)$ gives a strongly Weinstein open book decomposition of $\mathcal{OB}(W,\psi)$. This is the case if $\psi$ is the product of positive symplectic Dehn twists, as $\mathcal{OB}(W,\psi)$ is the boundary of a Weinstein Lefschetz fibration, but it is unknown to what extent the exact symplectic mapping class group of a Weinstein domain is generated by Dehn twists.   
\end{remark}

There are relative versions of (abstract) open books for contact manifolds with boundary, first considered in dimension $3$ by Honda-Kazez-Matić \cite{honda2009sutured} and generalized to all dimensions by Honda-Huang \cite{HH18}. For now, we content ourselves with the abstract version. An \textit{abstract partial open book} is the data $(W, \psi:S \to W)$, where $W$ is a Weinstein domain, $S\subset W$ is a Weinstein subcobordism with $\partial_+ S = \partial W$, and $\psi:S \to W$ is an exact symplectic embedding that is the identity near $\partial_+ S$. The relative mapping torus then produces a contact manifold with boundary that we denote $\mathcal{POBD}(W, \psi:S \to W)$. After rounding corners, the boundary $\Sigma$ is convex with decomposition $R_+ \cup \Gamma \cup R_-$ where 
\begin{align*}
    R_+ &= W \setminus S, \\
    R_- &= W \setminus \psi(S).
\end{align*}

Both open books and partial open books admit a positive stabilization operation that does not affect the contactomorphism type of the resulting contact manifold; see \cite{vankoert2017lecture} for details. In the closed case, \textit{positive stabilization} of an abstract open book $(W, \psi)$ is the passage to $(W \cup h, \tau \circ \tilde{\psi})$, where 
\begin{enumerate}
    \item $D\subset W$ is a regular Lagrangian disk properly embedded in the page, 
    \item $h$ is a critical Weinstein handle attached to $\partial W$ along $\partial D$, 
    \item $\tilde{\psi}: W \cup h \to W \cup h$ is the extension of $\psi$ across $h$ by the identity, and
    \item $\tau: W \cup h \to W \cup h$ is the positive symplectic Dehn twist around the exact Lagrangian sphere $D\cup C$, where $C$ is the core of $h$. 
\end{enumerate}
Positive stabilization has additional subtleties in the relative case. Given a partial open book $(W, \psi:S \to W)$ and a regular Lagrangian disk $D\subset W$, the partial open books 
\begin{align*}
    (W \cup h,& \,\,\, \tau \circ \tilde{\psi}: S\cup h\, \to\, W\cup h),\\
    (W \cup h,& \,\,\,\tilde{\psi}\circ \tau: \tau^{-1}(S) \cup N(D) \,\to\, W \cup h)
\end{align*}
are called the \textit{(TB1)} and \textit{(TB2) positive stabilizations along $D$}, respectively. The terminology is for consistency with \cref{def:bypass}, \cref{lemma:trivial_bypass_lemma}, and the following: 

\begin{lemma}\cite[Lemma 2.5.5]{BHH23}
Let $(M,\xi)= \mathcal{POBD}(W, \phi: S \to W)$ be a partial open book with convex boundary.
\begin{enumerate}
    \item A (TB1) trivial bypass attachment to $M$ with data $(\Lambda_{\pm}; D_{\pm})$ corresponds to a (TB1) positive stabilization of $(W, \phi: S \to W)$ along $D_-$. 

    \item A (TB2) trivial bypass attachment to $M$ with data $(\Lambda_{\pm}; D_{\pm})$ corresponds to a (TB2) positive stabilization of $(W, \phi: S \to W)$ along $D_+$. 
\end{enumerate}
\end{lemma}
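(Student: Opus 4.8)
The plan is to track the effect of the trivial bypass attachment on the partial open book data $(W,\phi\colon S\to W)$ by decomposing the bypass into its constituent contact $n$- and $(n+1)$-handles via \cref{theorem:bypass_attachment}, and then translating each handle into a modification of the page and monodromy. The dictionary I would use, read off from \cref{subsec:dec_diags}, is the following: a contact $n$-handle enlarges the page by a critical Weinstein handle $h$, adding $h$ to both $R_+=W\setminus S$ and $R_-=W\setminus\phi(S)$; a contact $(n+1)$-handle attached along a sphere $D_+\cup D_-$ deletes the neighborhoods $N(D_\pm)$ from $R_\pm$, which on the page amounts to enlarging the subcobordism $S$ by $N(D_+)$ and forcing the extended monodromy to carry this new piece onto $N(D_-)$.

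I would handle the (TB1) case first, using the $R_+$-centric model of \cref{theorem:bypass_attachment}: a contact $n$-handle $h$ along $\Lambda_-\uplus\Lambda_+$ together with a contact $(n+1)$-handle along $\tilde D_-\cup\Lambda_+^{-\ve}\cup D_+^{-\ve}$. By the dictionary above, the $n$-handle enlarges the page to $W\cup h$ while the $(n+1)$-handle enlarges the subcobordism by $N(D_+)$; extending $\phi$ across the handle by the identity then produces the underlying data $\tilde\phi\colon S\cup h\to W\cup h$, using that $N(D_+)$, glued to $S$, is a copy of $h$. I would next feed in the trivial hypothesis (TB1) of \cref{lemma:trivial_bypass_lemma}: the positive disk $D_+$ is a Reeb shift of the cocore of $h$ inside $R_+$, so deleting $N(D_+)$ cancels $h$ on the positive side and returns $R_+=W\setminus S$ unchanged, exactly as the (TB1) positive stabilization demands. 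All remaining content lives on the negative side, where the disk $D_-\subseteq R_-$ together with the core $C$ of $h$ closes up into the Lagrangian sphere $L=C\cup D_-$.

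The crux is to show that attaching the contact $(n+1)$-handle along $L$ composes the monodromy with the positive Dehn twist $\tau=\tau_L$, producing $\tau\circ\tilde\phi$ on $S\cup h$ and hence precisely the (TB1) positive stabilization along $D_-$. To establish this I would pass to the relative mapping torus model of $\mathcal{POBD}$ and follow the vanishing cycle $L$ as the page sweeps once around the binding: the $(n+1)$-handle reglues the page to itself along $L$ with the Reeb shift prescribed by \cref{theorem:bypass_attachment}, and I would argue that the resulting return map differs from $\tilde\phi$ by a diffeomorphism supported in a neighborhood of $L$, which the shift direction pins down to be the positive Dehn twist $\tau_L$. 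I expect this to be the main obstacle, since it requires a genuine geometric computation of the monodromy induced by the $(n+1)$-handle rather than the purely set-theoretic bookkeeping of the regions $R_\pm$; as a consistency check, one can verify abstractly that $(R_-\cup h)\setminus N(\tilde D_-)$ is Weinstein deformation equivalent to $R_-\cong\tau(R_-)$, so that the twist is invisible to the deformation type and records only how $R_-$ re-embeds in $W\cup h$.

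Finally, the (TB2) case is formally symmetric under exchanging the roles of $R_+$ and $R_-$: using the $R_-$-centric model and the trivial hypothesis (TB2) of \cref{lemma:trivial_bypass_lemma}, the $(n+1)$-handle now cancels $h$ on the negative side, and the Lagrangian sphere $L=C\cup D_+$ lives over $R_+$. Because the twist is now supported near the positive region, the orientation conventions of the mapping torus place $\tau$ on the opposite side of the composition, giving monodromy $\tilde\phi\circ\tau$ with subcobordism $\tau^{-1}(S)\cup N(D_+)$, that is, exactly the (TB2) positive stabilization along $D_+$. I would close by noting that \cref{theorem:bypass_attachment} renders the underlying contact cobordism vertically invariant, consistent with the fact that positive stabilization leaves the contactomorphism type unchanged.
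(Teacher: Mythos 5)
Your skeleton is the right one, and it matches the idea the paper itself sketches --- note that the paper does not prove this lemma at all, but defers to \cite[Lemma 2.5.5]{BHH23} and offers only \cref{fig:trivPOBDstab}: decompose the bypass into its contact $n$- and $(n+1)$-handles via \cref{theorem:bypass_attachment}, record the effect of each handle on the page and the regions $R_\pm$, use triviality of the data to cancel the $n$-handle on one side, and assemble the Lagrangian sphere $L=C\cup D_\mp$ on the other. The region-level bookkeeping you carry out (page enlarged to $W\cup h$, subcobordism enlarged to $S\cup N(D_+)\simeq S\cup h$, positive side returned to $W\setminus S$) is correct.

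The gap sits exactly where you flag ``the main obstacle,'' and it is not a loose end but the entire content of the lemma: the claim that the $(n+1)$-handle regluing changes the monodromy by precisely one \emph{positive} Dehn twist $\tau_L$. Your dictionary constrains the new monodromy only through where it sends subsets --- it must carry $N(D_+)$ onto $N(D_-)$ --- and such set-level constraints are blind to both the power and the sign of the twist: $\tau_L^{-1}\circ\tilde\phi$, or $\tau_L\circ\sigma\circ\tilde\phi$ with $\sigma$ compactly supported, satisfy the same constraints, yet a negative twist is a \emph{negative} stabilization, which generally changes the contact manifold and would contradict the vertical invariance of trivial bypass cobordisms asserted in \cref{theorem:bypass_attachment}. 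Pinning down ``supported near $L$'' and ``exactly one positive twist'' requires the explicit coordinate models for the contact $(n+1)$-handle gluing, which is what \cite{BHH23} (building on \cite[Lemma 8.3.1]{HH18} and \cite[Lemma 8.4.3]{honda2019convex}) actually carries out; ``the shift direction pins it down'' is a restatement of the desired conclusion, not an argument, and your consistency check (deformation invariance of $R_-$) shows precisely that region bookkeeping cannot detect the twist. A secondary but load-bearing slip: you ``feed in the trivial hypothesis (TB1) of \cref{lemma:trivial_bypass_lemma},'' but the hypothesis of the statement is triviality of the bypass \emph{data} in the sense of \cref{def:bypass} (a standard unknot/disk pair below $\Lambda_-$); that the $(n+1)$-handle's positive disk becomes a Reeb-shifted cocore of the newly attached $n$-handle must be \emph{derived} from this (it is the isotopy depicted in \cref{fig:trivPOBDstab}), not assumed --- and in the $R_+$-centric model you chose, the negative disk $\tilde D_-$ crosses the new handle, so the hypotheses of \cref{lemma:trivial_bypass_lemma} as stated are not even met there; the cancellation should be run in the $R_-$-centric model. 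The same criticism applies to the (TB2) case, where ``orientation conventions of the mapping torus'' is made to do the work of deriving both the placement of $\tau$ on the other side of the composition and the change of domain to $\tau^{-1}(S)\cup N(D_+)$.
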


We refer to \cite{BHH23} for a proof, but include \cref{fig:trivPOBDstab} for the idea.

\begin{figure}[ht]
\vskip-0.8in
	\begin{overpic}[scale=0.43]{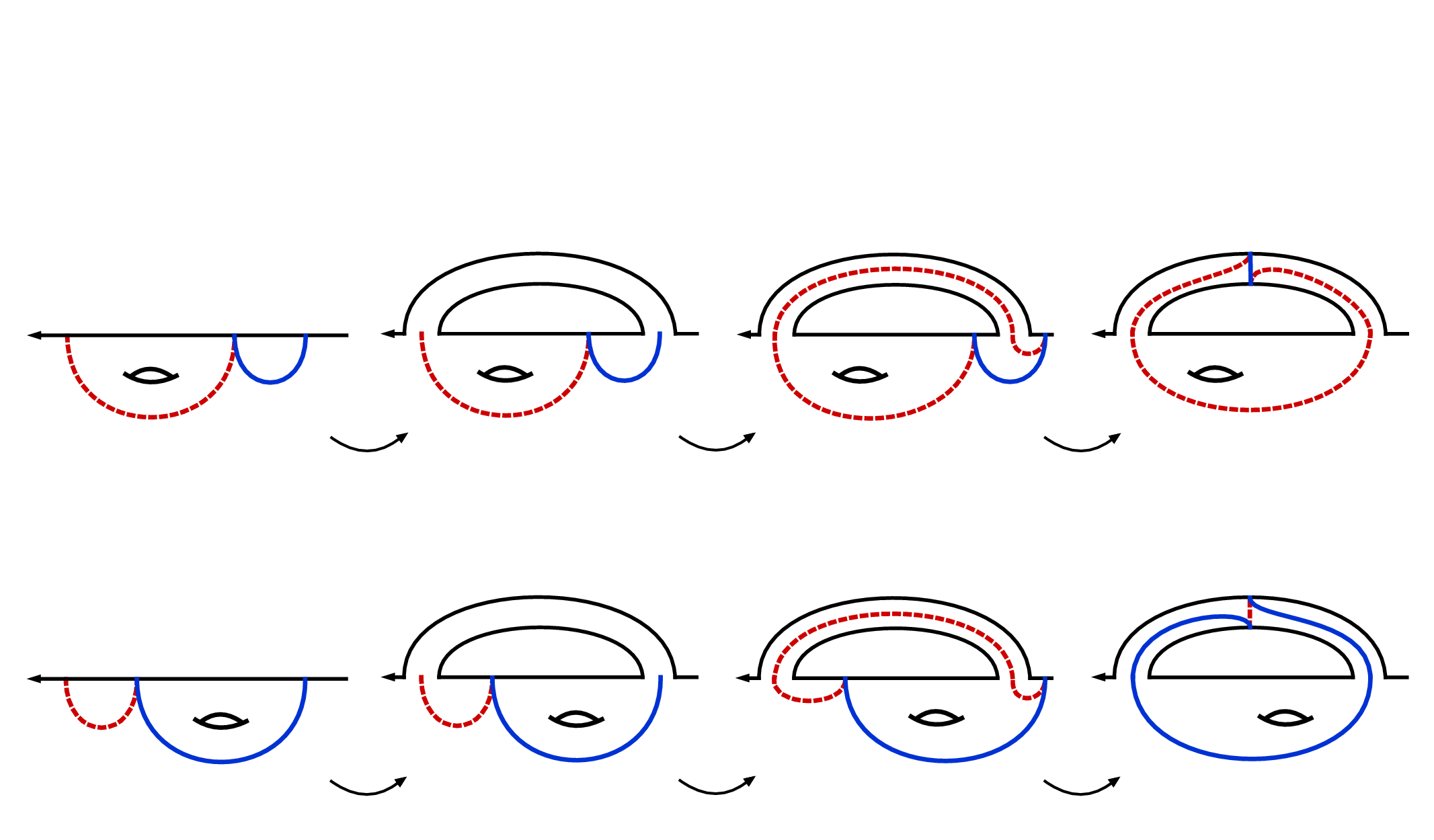}
	    \put(5,42.5){\small (TB1)}
	    \put(5,28){\small {\color{darkred}  $D_-$}}
	    \put(20,30){\small \textcolor{darkblue}{$D_+$}}
	    \put(21,25.25){\tiny attach $n$-handle}
	    \put(42.5,25.25){\tiny identify attaching sphere}
	    \put(44.5,23.25){\tiny of ($n+1$)-handle}
	    
	    \put(68.5,29.5){\small \textcolor{darkblue}{$D_+^{-\ve}$}}
	    \put(5,18){\small (TB2)}
	    \put(5,5){\small {\color{darkred}  $D_-$}}
	    \put(17.5,3.25){\small \textcolor{darkblue}{$D_+$}}
	    \put(21,1.25){\tiny attach $n$-handle}
	    \put(42.5,1.25){\tiny identify attaching sphere}
	    \put(44.5,-0.75){\tiny of ($n+1$)-handle}
	    
	    \put(66,2.75){\small \textcolor{darkblue}{$D_+^{-\ve}$}}
	\end{overpic}
	\caption{Trivial bypasses yielding partial open book stabilizations; figure adopted from \cite{BHH23}.}
	\label{fig:trivPOBDstab}
\end{figure}

\subsection{Open books, contact handles, and bypass decompositions}\label{subsec:OBDandBypass}

Finally, we discuss the connection between open books and contact handles. The following diagram gives an informal summary:

\begin{center}
\begin{tikzcd}
  \{\text{bypass decompositions}\} \arrow[r,hookrightarrow] \arrow[d,leftrightsquigarrow]
    & \{\text{contact handle decompositions}\} \arrow[d,rightsquigarrow] \\
  \{\text{strongly Weinstein open book decompositions}\} \arrow[r,hookrightarrow] &
\{\text{open book decompositions}\} 
\end{tikzcd}    
\end{center}

\noindent A \textit{bypass decomposition} of a closed contact manifold $(M, \xi)$, also called a \textit{$\Theta$-decomposition} or \textit{mushroom burger} in \cite{honda2019convex,BHH23}, is 
\[
(M, \xi) = H_0 \cup (\Sigma \times [0,1]) \cup H_1
\]
where $H_0$ and $H_1$ are contact handlebodies each with convex boundary diffeomorphic to $\Sigma$, and $\Sigma \times [0,1]$ is given by a specified sequence of bypass attachments. A bypass decomposition induces a contact handle decomposition of $(M, \xi)$ by starting with the contact $k$-handles from $H_0$, attaching the contact $n$- and ($n+1$)-handles from the bypass layer, then attaching each contact $k$-handle in $H_1$ as an upside-down contact ($2n+1-k$)-handle.

\begin{theorem}\cite{Giroux2002GeometrieDC,honda2019convex}
Let $(M, \xi)$ be a closed contact manifold. A contact handle decomposition naturally induces an open book decomposition. Moreover, if the contact handle decomposition arises from a bypass decomposition, the open book decomposition is strongly Weinstein.    
\end{theorem}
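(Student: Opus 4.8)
The plan is to build the open book by carrying a partial open book structure through the handle decomposition and then capping it off, following the strategy of Giroux and Honda--Huang. The starting observation is that a contact handlebody $H = W \times [-1,1]$ is exactly the partial open book $\mathcal{POBD}(W, \mathrm{id} \colon \emptyset \to W)$: its convex boundary has $R_+ = R_- = W$ and dividing set $\Gamma = \partial W$. Given a contact handle decomposition, I would organize the handles into a ``lower'' collection that builds up such a partial open book on the evolving convex boundary and an ``upper'' collection that, read upside down, caps it off. Each individual handle attachment preserves convexity of the boundary and hence updates the partial open book, so that propagating through all the handles leaves us with a decomposition $(M,\xi) = (\text{lower part}) \cup (\text{upper part})$ in which both pieces carry compatible (partial) page structures.

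For the update rules I would appeal to the handle descriptions in \cref{subsec:dec_diags}. A contact $k$-handle with $k \le n$ is the contactization of a Weinstein $k$-handle and therefore attaches a Weinstein $k$-handle simultaneously to $R_+$ and $R_-$, enlarging the page from $W$ to $W \cup h$; a contact $(n+1)$-handle removes neighborhoods of its positive and negative Lagrangian disk fillings from $R_\pm$, which alters the monodromy subdomain $S$ and composes the monodromy with the induced symplectomorphism. When these handles occur in the canceling $n$/$(n+1)$ pairs of a bypass, this recovers the stabilization picture of \cref{theorem:bypass_attachment} and \cite[Lemma 2.5.5]{BHH23} for trivial data, and more generally a page enlargement together with a Dehn-twist--type monodromy modification. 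The main bookkeeping is to check that the abstract page-and-monodromy data one reads off at each stage genuinely reassembles, up to contactomorphism, into the manifold obtained after the attachment.

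To conclude the first assertion, I would glue the upper part to the lower partial open book. Because the upper part is a union of (upside-down) contact handlebodies attached along a convex hypersurface matching the current boundary $\Sigma$, its pages glue to the partial pages along the common dividing set to form closed pages, turning the relative mapping torus into a genuine mapping torus. This yields a binding $B$, a fibration $\pi \colon M \setminus B \to S^1$ equal to the angular coordinate near $B$, and a contact form whose Reeb field is transverse to every page --- i.e., an open book in the sense of \cref{def:obd}. For the strongly Weinstein refinement under the bypass hypothesis, I would specialize to a bypass decomposition $(M,\xi) = H_0 \cup (\Sigma \times [0,1]) \cup H_1$, where the lower part is $H_0$ together with the bypass layer and the upper part is $H_1$. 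Here every page is assembled purely from Weinstein pieces: the underlying Weinstein domains of $H_0$ and $H_1$ together with the critical Weinstein handles contributed by the contact $n$-handles of the bypass layer. Each page therefore carries the structure of a completed Weinstein domain required by \cref{def:obd}, so the open book is strongly Weinstein; this Weinstein control of the page is exactly what the bypass organization supplies and what a general contact handle decomposition need not.

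The principal obstacle is not the combinatorial propagation but the geometry of the contact form: one must produce a single contact form, adapted to the entire handle decomposition, whose Reeb vector field is transverse to all pages at once, and this requires interpolating across every handle attachment and across the final gluing while preserving the fibration near the binding. This ``Giroux form'' construction is the technical heart of the statement. A secondary subtlety is the non-trivial bypass step, where one must pin down precisely how the contact $(n+1)$-handle acts on the monodromy subdomain $S$ and the monodromy $\psi$, and confirm that the reassembled partial open book is contactomorphic to the post-attachment manifold; I expect this, rather than the easier trivial (stabilization) case handled by \cite[Lemma 2.5.5]{BHH23}, to demand the most care.
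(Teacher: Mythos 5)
Your overall architecture --- lower and upper collections of handles, partial open books propagated through the lower part, and a final gluing that closes up the pages --- has the same skeleton as the paper's argument, but two of your steps have genuine gaps that the paper's proof is specifically structured to avoid. First, the paper does not propagate a partial open book through the handles one at a time. It first \emph{shuffles} the order of attachment so that all handles of index at most $n$ come before all handles of index at least $n+1$; after the shuffle the lower part is literally a contact handlebody, the upper part read upside down is another contact handlebody, and $(M,\xi)$ is a contact Heegaard splitting, which induces the open book by the known correspondence. Your per-handle update rule is where the trouble lies: you assert that an arbitrary contact $(n+1)$-handle ``alters the monodromy subdomain $S$ and composes the monodromy with the induced symplectomorphism,'' but the available results (\cref{theorem:bypass_attachment}, \cite[Lemma 8.3.1]{HH18}, \cite[Lemma 8.4.3]{honda2019convex}) describe the effect on partial open books only of \emph{bypass} attachments, i.e.\ of canceling $n$/$(n+1)$-pairs. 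For an isolated contact $(n+1)$-handle no such update rule is established, and producing one is essentially as hard as the theorem itself. If your ``organize'' step is interpreted as the paper's shuffle, this issue evaporates: the $(n+1)$-handle bookkeeping is then only ever needed inside bypass pairs, where the cited lemmas apply.

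Second, for the strongly Weinstein refinement, your inference ``every page is assembled purely from Weinstein pieces, therefore each page carries the structure of a completed Weinstein domain'' does not prove what is required. By \cref{def:obd}, strongly Weinstein is a condition on the \emph{induced} structure $\alpha\vert_{\pi^{-1}(\theta)}$ on each page, not on the abstract existence of some Weinstein structure on that smooth manifold; an induced Liouville structure on a page built from Weinstein pieces need not itself be Weinstein, and this distinction is the entire content of the condition. The paper instead runs the bypass decomposition through the relative theory: $H_0$ is a trivial, hence strongly Weinstein, partial open book; \cite[Lemma 8.3.1]{HH18} and \cite[Lemma 8.4.3]{honda2019convex} show that attaching a bypass to a strongly Weinstein partial open book yields a strongly Weinstein partial open book; and the final upside-down $H_1$ attachment preserves this as well. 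Your description of the mechanism (page enlargement by the contact $n$-handle together with a Dehn-twist-type modification of the monodromy) is the correct picture --- it is exactly what those lemmas establish --- but as written you flag its verification as a ``subtlety'' to be worked out rather than supplying or citing it, and the ``Weinstein pieces'' heuristic cannot substitute for that verification.
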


\begin{proof}[Proof idea.]
Given a contact handle decomposition, we may shuffle the order of attachment so that all handles of index at most $n$ are attached before handles of index at least $n+1$. The resulting contact handlebodies give a higher dimensional contact Heegaard splitting of $(M, \xi)$, which induces an open book decomposition. 

To show that bypass decompositions induce strongly Weinstein open book decompositions, we appeal to the relative case. By \cite[Lemma 8.3.1]{HH18} and \cite[Lemma 8.4.3]{honda2019convex}, attaching a bypass to a \textit{strongly Weinstein} partial open book yields a new strongly Weinstein partial open book. In a bypass decomposition, we view $H_0$ as a trivial partial open book with empty monodromy --- that is, as a thickening $W\times [-1,1]$ of a page with no gluing occurring --- which in particular is strongly Weinstein. Each bypass attachment in $\Sigma \times [0,1]$ preserves the strongly Weinstein property, as does the final upside-down $H_1$ attachment.  
\end{proof}

\subsection{Flexibility and looseness of Weinstein hypersurfaces}\label{subsec:flex}

Important to our discussion of bypasses are the notions of looseness and flexibility, the former of which is usually considered for Legendrian submanifolds, the latter for Weinstein domains.  Either adjective --- loose or flexible --- may be applied to Weinstein hypersurfaces of appropriate dimensions, and we collect the relevant definitions due to Eliashberg \cite{eliashberg2018weinsteinrevisited} in this subsection.  Throughout, we consider a Weinstein hypersurface $(W,\alpha\vert_W,\phi)$ of dimension $2n$ embedded in a contact manifold $(M,\xi=\ker\alpha)$ of dimension $2n+1$.

Recall that Murphy defined in \cite{murphy2012loosev2} the notion of a \emph{loose} Legendrian submanifold $\Lambda^n\subset(M,\xi)$, provided $n\geq 2$. A \emph{flexible} Weinstein domain of dimension $2n\geq 6$ is then one whose critical attaching locus $\Lambda^{n-1}$ is loose as a Legendrian submanifold of its ambient contact level set. Loose Legendrians and flexible Weinstein domains satisfy a number of $h$-principles \cite{cieliebak2012stein}.

\begin{definition}
Let $W$ be a Weinstein hypersurface in $(M^{2n+1},\xi)$.  For $2n\geq 6$, we call $W$ \textbf{flexible} if $(W, \alpha\vert_W, \phi)$ is a flexible Weinstein domain.  For $2n\geq 4$, we call $W$ \textbf{loose} if the $n$-dimensional strata of its skeleton $\mathrm{Skel}(W)$ are loose as Legendrian submanifolds of $(M,\xi)$.
\end{definition}

Looseness of a Weinstein hypersurface refers to its particular embedding into $(M,\xi)$, while flexibility refers to its intrinsic Weinstein topology. The following example is useful to keep in mind. 

\begin{example}\label{ex:loose}
For $2n+1 \geq 7$, let $\Lambda, \Lambda_{\mathrm{loose}} \subset (M^{2n+1}, \xi)$ be a standard and loose Legendrian unknot, respectively. Let $W, W_{\mathrm{loose}}\subset M$ be their canonical Weinstein ribbon neighborhoods. Neither $W$ nor $W_{\mathrm{loose}}$ are flexible hypersurfaces; both are deformation equivalent to the standard Weinstein structure on $T^*S^n$. However, $W_{\mathrm{loose}}$ is loose, while $W$ is not.  
\end{example}

Nonetheless, flexibility implies looseness where both phenomena are possible.

\begin{proposition}[{\cite[Proposition 4.4]{eliashberg2018weinsteinrevisited}}]\label{prop:flexible-implies-loose}
If $(M,\xi)$ is a contact manifold of dimension $2n+1\geq 7$, then any flexible Weinstein hypersurface in $(M,\xi)$ is loose.
\end{proposition}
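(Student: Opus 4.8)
The plan is to work in a standard neighborhood of the hypersurface and to show that the loose chart guaranteed by flexibility ``sweeps out'' a loose chart in the ambient contact manifold. First I would invoke the fact recalled in \cref{subsec:convex} that a neighborhood of the Weinstein hypersurface $W$ in $M$ is contactomorphic to the contactization $(W\times\R_t,\,dt+\alpha\vert_W)$. Writing $\lambda=\alpha\vert_W$ with Weinstein Morse function $\phi$, the $n$-dimensional strata of $\mathrm{Skel}(W)$ are the Lagrangian cores $C_i$ of the index-$n$ critical points; since $\lambda\vert_{C_i}=0$ and $C_i$ is $d\lambda$-isotropic, each $C_i\times\{0\}$ is a Legendrian submanifold of $M$. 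Thus it suffices to produce, for every $i$, a loose chart for $C_i$ in $M$. Flexibility of $W$ supplies exactly the needed input one dimension down: the attaching sphere $\Lambda_i^{n-1}=\partial C_i$ is a loose Legendrian in the contact level set $(N_i^{2n-1},\xi_{N_i})$ just below the corresponding critical value \cite{cieliebak2012stein}, so it carries a Murphy loose chart $U\subset N_i$ contactomorphic to the standard model $(\R^3\times T^*\R^{\,n-2},\,\Lambda_{zz}\times\R^{\,n-2})$ \cite{murphy2012loosev2}.

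The heart of the argument is a suspension of this chart. Near $N_i$ the Weinstein structure on $W$ is a piece of symplectization $N_i\times\R_s$ with $\lambda=e^{s}\alpha_{N_i}$, and the core is the isotropic cylinder $C_i\cap(N_i\times\R_s)=\Lambda_i\times[0,\delta]_s$. Consequently $M$ is locally modeled on $(N_i\times\R_s\times\R_t,\,dt+e^{s}\alpha_{N_i})$, a contact manifold of dimension $2n+1$ in which $C_i$ corresponds to $\Lambda_i\times[0,\delta]\times\{0\}$. The key computation is that this model is contactomorphic to a product $(N_i\times T^*\R,\,\alpha_{N_i}+\lambda_{\mathrm{std}})$ carrying the core to $\Lambda_i\times(\text{zero section})$: the two extra directions, the Liouville-flow coordinate $s$ and the contactization coordinate $t$, assemble into the $T^*\R$ factor. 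Restricting this identification over $U\subset N_i$ exhibits a neighborhood of $C_i$ as $U\times T^*\R$ with $C_i\mapsto(\Lambda_{zz}\times\R^{\,n-2})\times(\text{zero section})$. But
\[
(\R^3\times T^*\R^{\,n-2})\times T^*\R=\R^3\times T^*\R^{\,n-1},\qquad (\Lambda_{zz}\times\R^{\,n-2})\times\R=\Lambda_{zz}\times\R^{\,n-1},
\]
which is precisely Murphy's standard loose chart model for an $n$-dimensional Legendrian in a $(2n+1)$-dimensional contact manifold. Hence $U\times T^*\R$ is a loose chart for $C_i$.

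The step I expect to be the main obstacle is the contactomorphism $(N_i\times\R_s\times\R_t,\,dt+e^{s}\alpha_{N_i})\cong(N_i\times T^*\R,\,\alpha_{N_i}+\lambda_{\mathrm{std}})$, on account of the conformal factor $e^{s}$ multiplying $\alpha_{N_i}$ in the symplectization model. I would handle this by localizing: loose charts may be taken arbitrarily small, so I can restrict $s$ to a short interval about some $s_0$, rescale $\alpha_{N_i}$ by $e^{s_0}$, and absorb the remaining variation of $e^{s}$ as a $C^\infty$-small perturbation of the contact form, under which looseness is robust and the chart may be shrunk. One must also check that the size condition in Murphy's definition survives the identification; this again follows because the defining zig-zag $\Lambda_{zz}$ and its action window lie entirely in the untouched $\R^3$ factor. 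Granting the local model, the construction is uniform over the finitely many index-$n$ critical points, producing a loose chart for every $n$-dimensional stratum of $\mathrm{Skel}(W)$, so $W$ is loose by definition. The hypothesis $2n+1\geq 7$ enters precisely in forcing $n\geq 3$, so that flexibility, looseness, and the loose-chart models above are all defined; this recovers \cite[Proposition 4.4]{eliashberg2018weinsteinrevisited}.
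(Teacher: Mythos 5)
The paper never proves \cref{prop:flexible-implies-loose}; it imports the statement wholesale from \cite[Proposition 4.4]{eliashberg2018weinsteinrevisited}, so your proposal has to be judged against that argument. Your overall strategy is the right one, and it is the same suspension mechanism the cited result rests on: identify a neighborhood of $W$ with the contactization $(W\times\R_t,\ker(dt+\lambda))$, observe that the $n$-dimensional strata $C_i$ of $\mathrm{Skel}(W)$ are Legendrian there, and promote a Murphy chart for $\Lambda_i=\partial C_i$ in the level set $N_i$ to a chart for $C_i$ using the two transverse directions. However, two steps have genuine gaps. The first concerns your ``key computation.'' The conformal factor $e^s$, which you flag as the main obstacle, is in fact harmless: dividing $dt+e^s\alpha_{N_i}$ by $e^s$ and substituting $q=t$, $p=e^{-s}$ identifies the local model \emph{exactly} with $(N_i\times\{p>0\}\times\R_q,\ \ker(\alpha_{N_i}+p\,dq))$, with no perturbation or localization in $s$ needed. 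The genuine subtlety is that this identification sends the core $\Lambda_i\times[0,\delta]\times\{0\}$ to $\Lambda_i\times\{q=0,\ p\in(e^{-\delta},1]\}$ --- an arc in the cotangent \emph{fiber}, not in the zero section as you assert. This is repairable ($\alpha_{N_i}+p\,dq$ and the rotation-invariant primitive $\alpha_{N_i}+\tfrac{1}{2}(p\,dq-q\,dp)$ differ by $\tfrac{1}{2}d(pq)$, which can be absorbed by a shift along the Reeb flow of $\alpha_{N_i}$ through time $\tfrac{1}{2}pq$; this shift fixes the core, where $pq=0$, after which one rotates the $(q,p)$-plane a quarter turn and shifts back), but no such argument appears in your write-up. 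Worse, the principle you invoke instead --- ``loose charts may be taken arbitrarily small'' --- is false: shrinking a chart shrinks the width $\rho$ while leaving the zig-zag action $a$ unchanged, and a chart with $a$ too large relative to $\rho^2$ is not a loose chart. Murphy's size condition is precisely the assertion that looseness cannot be localized this way; if it could, the notion would be vacuous.

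The second, and more serious, gap is your treatment of the size condition itself. Saying that ``the zig-zag and its action window lie entirely in the untouched $\R^3$ factor'' misses the point: Murphy's condition compares $a$ with the size of the \emph{entire} $T^*\R^{n-1}$ factor, which in your chart is $T^*\R^{n-2}\times T^*\R$ with the new $T^*\R$ built from the $(s,t)$-directions. The symplectic area available in that new factor is bounded by the thickness $\epsilon$ of the ambient neighborhood of $W$ and the Liouville collar width $\delta$ --- it is of order $\epsilon\delta$ --- and nothing forces $a$ to be smaller than this; your proposal to restrict $s$ to a short interval makes the problem strictly worse. The missing idea is a rescaling toward the skeleton: the map $F_c(w,t)=(\psi_{\log c}(w),\,ct)$, where $\psi$ is the Liouville flow of $(W,\lambda)$ and $0<c<1$, satisfies $F_c^*(dt+\lambda)=c\,(dt+\lambda)$, hence is a contactomorphism onto its image that maps each stratum $C_i\times\{0\}$ into itself. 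Running your suspension at the pushed-in level set $\psi_{\log c}(N_i)$ scales the zig-zag action to $ca$ while the transverse area stays bounded below (indeed it grows like $1/c$ in the $p$-direction), so for $c$ sufficiently small all of Murphy's inequalities hold. With these two repairs --- the rotation identifying the core with a zero-section arc, and the Liouville rescaling securing the size condition --- your outline closes up into a correct proof, essentially the one the paper is citing.
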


This implication is restricted to dimensions 7 and above only because the definition of flexibility is restricted to Weinstein domains of dimension at least 6.  However, if a $4$-dimensional Weinstein hypersurface has critical attaching spheres that admit both positive and negative destabilizations, then the argument used by Eliashberg to prove \cref{prop:flexible-implies-loose} works just as well in dimension $5$.

Finally, we preview the role played by the stabilizing move of \cref{thm:main_moves} by offering an informal description of how one might "loosify" a Weinstein hypersurface while preserving its smooth isotopy type. Given the above discussion, there are two natural ways of doing so.

\begin{enumerate}
    \item (\emph{Loosification when $2n\geq 4$.}) Given a Weinstein hypersurface $W$ of dimension $2n\geq 4$, one may produce a loose Weinstein hypersurface $W'$ by installing a loose chart in the interior of --- that is, stabilizing --- each $n$-dimensional stratum of $\mathrm{Skel}(W)$.  Each of the resulting strata is formally Legendrian isotopic to its corresponding stratum in $\mathrm{Skel}(W)$, and thus we have a loose Weinstein thickening $W'$ which is smoothly isotopic and deformation equivalent to $W$; see \cref{ex:loose}.

    Notice that in case $2n=2$, "installation of a loose chart" corresponds to double stabilization of a Legendrian knot $\Lambda$, an operation which changes the contact framing along $\Lambda$, and thus the smooth isotopy type of its Weinstein ribbon.

    \item (\emph{Flexibilization when $2n\geq 6$.}) Given a Weinstein hypersurface $W$ of dimension $2n\geq 6$, one may instead loosify the ($n-1$)-dimensional attaching spheres of the ambient critical handles. The resulting $W'$ is a flexible  Weinstein hypersurface (hence loose by \cref{prop:flexible-implies-loose}) which is smoothly isotopic and almost symplectomorphic (but not deformation equivalent) to $W$.\label{part:flexibilization}
    
    In case $2n=4$, where the relevant Legendrian attaching spheres have dimension $1$, double stabilizing the attaching spheres yields a hypersurface $W'$ that will fail to be even diffeomorphic to $W$ (though the skeleta will still be smoothly isotopic).
\end{enumerate}

We will argue in the proof of \cref{thm:main_moves} that the stabilizing bypass move induces \eqref{part:flexibilization}. It is clear that the bypass move yields a Weinstein hypersurface deformation equivalent to $W'$ as described in \eqref{part:flexibilization}, but it takes more work to reason that the bypass attachment preserves the ambient isotopy type of $W$.

\section{Bypass moves}\label{sec:bypass_moves}

In this section we prove \cref{thm:main_moves}. In \cref{subsec:moves_all_dimensions} we break the theorem into \cref{prop:clasping_bypass} and \cref{prop:stabilizing_bypass} that respectively describe the (un)clasping and (de)stabilizing bypass moves. In \cref{subsec:moves_dim5} we specialize to dimension $2n+1=5$ and record an additional crossing change move, which is a consequence of the moves in \cref{subsec:moves_all_dimensions}. 

\subsection{Bypass moves in dimension $\geq 5$}\label{subsec:moves_all_dimensions}

The first bypass move describes how to (un)clasp two Legendrian sheets in a surgery diagram for the underlying Weinstein domain of a contact handlebody.

\begin{proposition}[The clasping bypass]\label{prop:clasping_bypass}
Let $(W_0, \lambda_0, \phi_0)$ be a Weinstein domain of dimension $2n\geq 4$. Suppose that in a surgery diagram for $W_0$ there is a chart with front projection given by the left side of \cref{fig:clasp}. Let $(W_1,\lambda_1, \phi_1)$ be the Weinstein domain obtained by modifying the surgery diagram as in the right side of \cref{fig:clasp}. Finally, let $H_i := W_i\times [-1,1]$, $i=0,1$, be the contact handlebody based over $W_i$. Then there are bypass attachments $B_{\mathrm{c}}$ and $B_{\mathrm{u}}$ such that 
\begin{align*}
    H_0 \cup B_{\mathrm{c}} &\cong H_1, \\
    H_1 \cup B_{\mathrm{u}} &\cong H_0.
\end{align*}
Here, $\cong$ indicates contactomorphism relative to the boundary. 
\end{proposition}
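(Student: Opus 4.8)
The plan is to construct the clasping bypass $B_{\mathrm{c}}$ explicitly as a pair of smoothly canceling contact $n$- and $(n+1)$-handles, using \cref{theorem:bypass_attachment}, and then to verify that attaching it to $H_0$ realizes the local surgery-diagram modification of \cref{fig:clasp}. First I would set up the bypass attachment data $(\Lambda_-, \Lambda_+; D_-, D_+)$ adapted to the chart in the left side of \cref{fig:clasp}: the two Legendrian sheets to be clasped will supply the disks $D_\pm \subset R_\pm$, with $\Lambda_\pm = \partial D_\pm$ arranged to intersect $\xi_\Gamma$-transversely at a single point as required. Since $R_\pm$ of $\partial H_0$ are both identified with $W_0$, the disks $D_\pm$ are taken to be (Reeb shifts of) the relevant Lagrangian disks filling the attaching spheres in the chart. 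The $R_+$-centric model of \cref{theorem:bypass_attachment} then prescribes attaching a contact $n$-handle along $\Lambda_- \uplus \Lambda_+$ and a contact $(n+1)$-handle along $\tilde{D}_- \cup \Lambda_+^{-\ve} \cup D_+^{-\ve}$.

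The core of the argument is a Kirby-calculus computation in the dividing set $\Gamma$ showing that $H_0 \cup B_{\mathrm{c}}$ is again a contact handlebody, and identifying its underlying Weinstein domain with $W_1$. The contact $n$-handle introduces a $(-1)$-surgery along $\Lambda_- \uplus \Lambda_+$ and the $(n+1)$-handle a $(+1)$-surgery along $\Lambda_+^{-\ve}$, together with the prescribed decorations of \cref{subsec:dec_diags}. I would then perform the handleslides recorded in \cref{fig:slides} — sliding the $(n+1)$-handle disks across the $(-1)$-surgery — to cancel the $(+1)/(-1)$ surgery pair, leaving a modified diagram. The key point is that because the $(n+1)$-handle cancels the $n$-handle smoothly, the result is a genuine contact handlebody $H_1 = W_1 \times [-1,1]$ (not merely a manifold with modified dividing set), and tracking the effect of the slides on the two original Legendrian sheets shows precisely that their attaching data in $W_1$ differs from $W_0$ by the clasp depicted on the right of \cref{fig:clasp}. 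The contactomorphism relative to the boundary follows because the bypass cobordism $\Sigma \times [0,1]$ leaves the convex boundary $\Sigma$ unchanged up to the local modification, and the canceling handle pair does not alter the germ of the contact structure near $\partial H_0$.

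For the reverse move, I would observe that the unclasping bypass $B_{\mathrm{u}}$ is constructed from the mirror bypass data on $H_1$ — swapping the roles of the two sheets, or equivalently using the $R_-$-centric model — so that the same handleslide computation run on $W_1$ recovers $W_0$. Alternatively, one can argue more structurally that the two bypasses $B_{\mathrm{c}}$ and $B_{\mathrm{u}}$ are inverse in the sense that $B_{\mathrm{u}}$ undoes the clasp, which is natural given that clasping and unclasping are mutually inverse local diagram modifications; the symmetry of \cref{fig:clasp} under interchanging its two sides makes this precise.

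\textbf{Main obstacle.}
I expect the principal difficulty to be the bookkeeping of the Lagrangian disk decorations through the handleslides. The surgery coefficients alone do not determine an $(n+1)$-handle attachment — one must track the preferred disk fillings $D_\pm$ in $R_\pm$ and their geometric intersections with cocores, as emphasized in \cref{subsec:dec_diags}. Verifying that after the slides the decorations on the two surviving sheets are exactly those of the clasped configuration $W_1$, and in particular that no spurious intersection with a cocore is introduced, is the step requiring genuine care. The spun-diagram conventions of \cref{subsec:kirby} add a further constraint: every Reidemeister move and handleslide must respect the $S^{n-2}$-spin symmetry of \cref{fig:thm_moves}, so I would need to confirm that the local model used in the computation is compatible with the global spin, which is where the restriction to centrally-spun fronts becomes essential.
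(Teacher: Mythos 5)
Your overall strategy (choose explicit bypass data adapted to the chart, attach via the $R_+$-centric model of \cref{theorem:bypass_attachment}, then identify the result diagrammatically) matches the paper's, but your key step contains a genuine gap. You conclude that $H_0\cup B_{\mathrm{c}}$ is a contact handlebody ``because the $(n+1)$-handle cancels the $n$-handle smoothly.'' Smooth cancellation of contact handles does \emph{not} imply cancellation at the level of contact structures --- this is exactly the point the paper stresses (a bypass attachment can even render the contact structure overtwisted, and \cref{ex1} explicitly warns that contact-level cancellation ``is not obvious at this stage''). The paper's proof closes this gap with \cref{lemma:trivial_bypass_lemma}: the bypass data is chosen so that $D_+$ is a small positive Reeb shift of the \emph{cocore} of one of the original Weinstein $n$-handles of $R_+$ (a disk filling the belt sphere, not the attaching sphere as you suggest), and $D_-$ is a standard disk in $R_-$. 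With this choice, condition (TB1) applies and licenses contact-level cancellation. Without invoking this lemma (or an equivalent), your argument does not establish that the result is a contact handlebody at all.

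There is a second, related error in which handles you cancel. If the bypass's own $n$- and $(n+1)$-handles canceled each other, the bypass would be trivial and you would recover $H_0$, not $H_1$. In the paper's computation (\cref{fig:clasp_move}), the new $(n+1)$-handle cancels against one of the \emph{original} $n$-handles of $W_0$ (the black Legendrian whose cocore supplied $D_+$), while the bypass's $n$-handle, attached along $\Lambda_-\uplus\Lambda_+$, \emph{survives} and becomes the clasped handle of $W_1$. Finally, your claim that the unclasping bypass is ``equivalently'' obtained by using the $R_-$-centric model is incorrect: the two centric models of \cref{theorem:bypass_attachment} are two presentations of the \emph{same} attachment, related by a handleslide, not inverse operations. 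The paper instead constructs $B_{\mathrm{u}}$ as a genuinely new bypass on $H_1$ (again with $D_+$ a Reeb-shifted cocore and $D_-$ a standard disk, chosen as in \cref{fig:unclasp_move}), and reruns the same trivial-bypass cancellation; your alternative suggestion of rerunning the construction on $W_1$ is the right idea, but the symmetry of \cref{fig:clasp} alone does not make it precise.
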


\begin{figure}[ht]
	\centering
	\begin{overpic}[scale=.36]{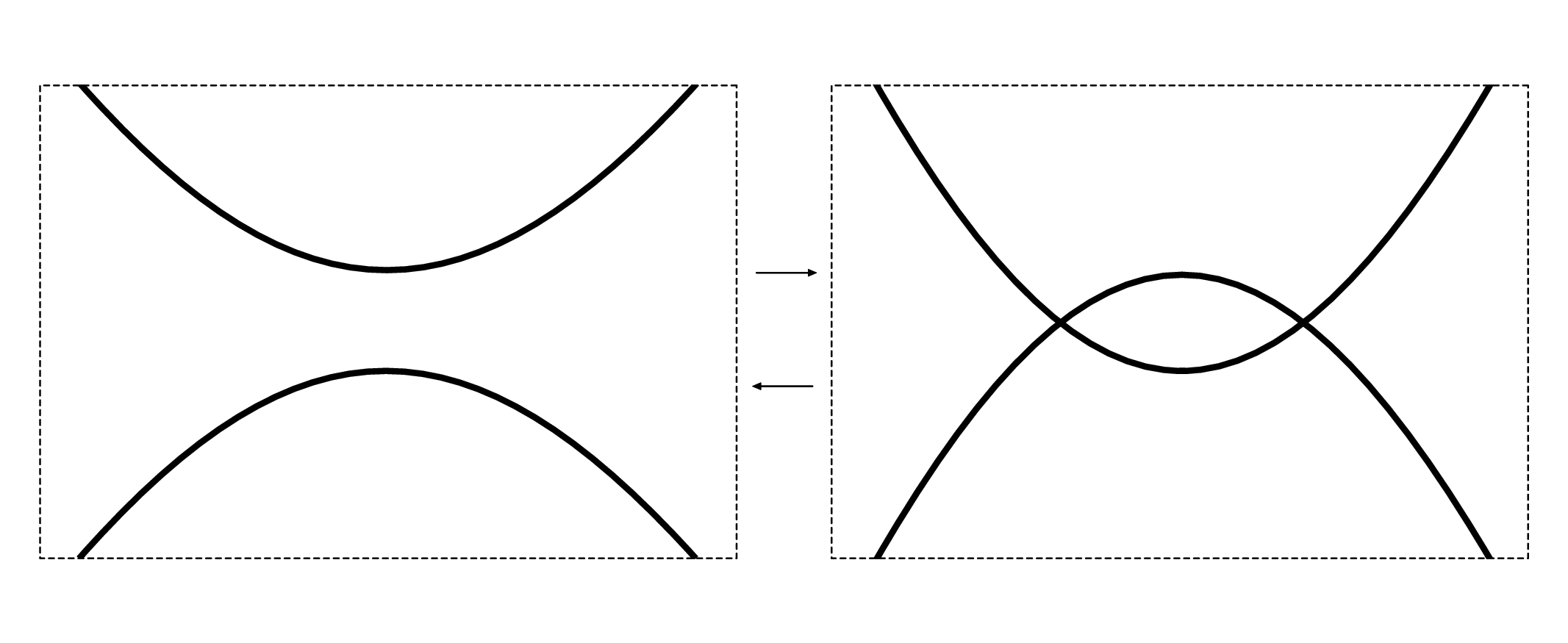}
 
	\put(48.75,24.75){\small $B_{\mathrm{c}}$}
    \put(48.75,14){\small $B_{\mathrm{u}}$}

    \put(41.5,30){\scriptsize $(-1)$}
    \put(41.5,10){\scriptsize $(-1)$}
    \put(92,28){\scriptsize $(-1)$}
    \put(92,12){\scriptsize $(-1)$}

    \put(23.75,2.5){$W_0$}
    \put(74.75,2.5){$W_1$}
 
	\end{overpic}
	\caption{The clasping and unclasping bypasses in \cref{prop:clasping_bypass}. When $2n\geq 6$, both diagrams are $S^{n-2}$-spun around the central vertical axis.}
	\label{fig:clasp}
\end{figure}

\begin{remark}
The bypasses in \cref{prop:clasping_bypass} are not trivial bypasses.    
\end{remark}

\begin{proof}
We begin by showing the existence of the clasping bypass $B_c$. Assume that $W_0$ has a local handlebody presentation as on the left side of \cref{fig:clasp}. We additionally assume that the two strands are associated to the same handle; the case where the strands belong to different Legendrians is similar, and we will comment on differences below as they appear in the proof. 

After smoothing corners, the contact handlebody $H_0=(W_0 \times [-1, 1]_t, \, \ker(dt + \lambda_0))$ has convex boundary $\Sigma$ with positive (resp. negative) region $R_{\pm}$, and the Weinstein handlebody presentation of $W_0$ induces a Weinstein handlebody presentation of both $R_{+}$ and $R_-$. Let $(\Lambda_+; D_+)$ be given by a small forward-time Reeb isotopy of the belt-sphere and co-core of the positive Weinstein $n$-handle of $R_+$; see the Legendrian decorated with the positive node in top left of \cref{fig:clasp_move}. (In the case that the two strands belong to different Legendrians, the blue pushoff of the upper strand is not present.) Let $(\Lambda_-; D_-)$ be the standard Lagrangian disk filling in $R_-$ of the indicated Legendrian in the same figure. Let $B_{\mathrm{c}}$ be the bypass with data $(\Lambda_{\pm}; D_{\pm})$.

\begin{figure}[ht]
	\centering
	\begin{overpic}[scale=.38]{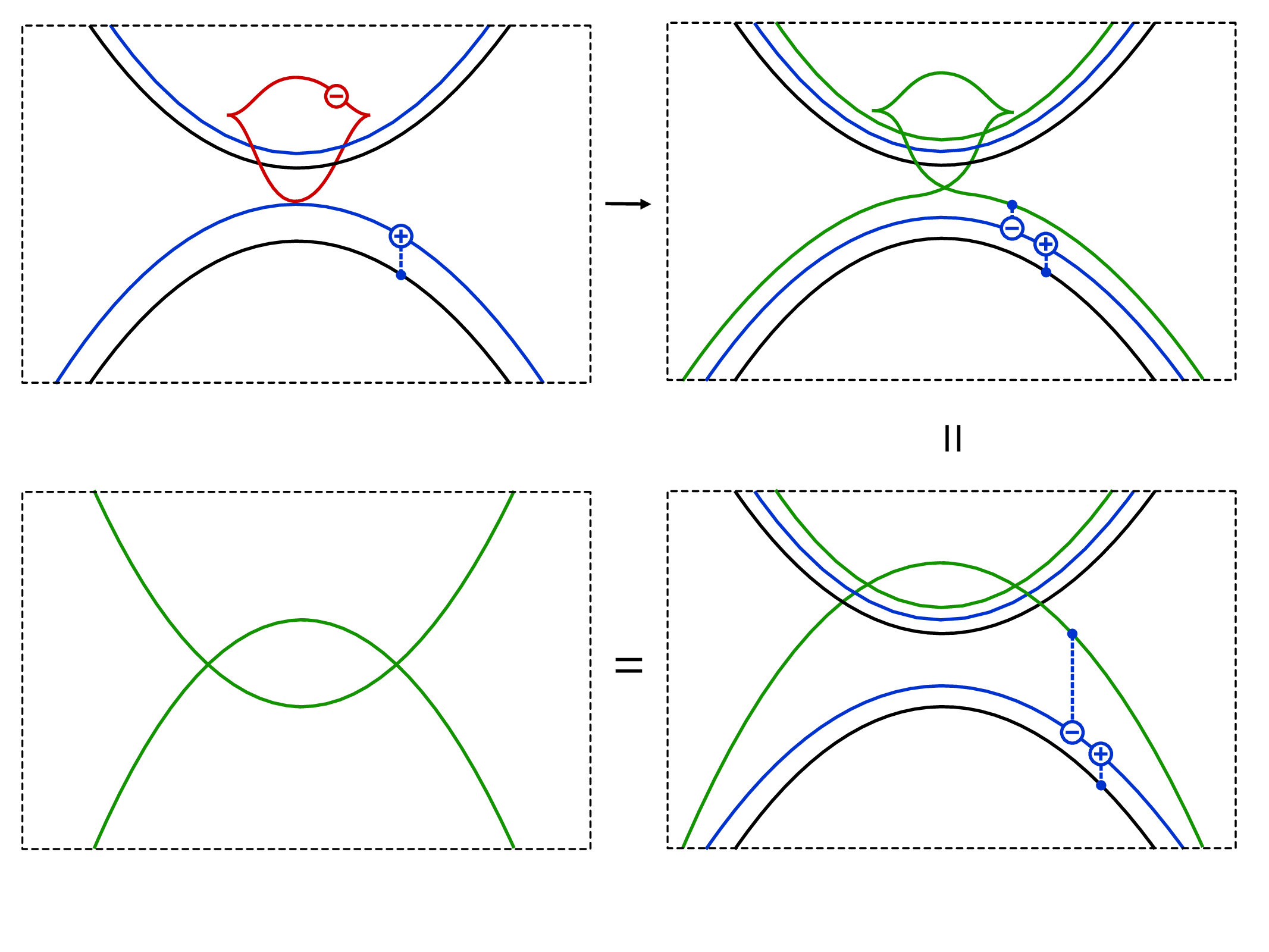}
	\put(16.5,68){\color{darkred}\small $\Lambda_-$}
	\put(58.5,59.5){\small \textcolor{darkgreen}{$\Lambda_-\uplus \Lambda_+$}}
	\put(9.5,55.5){\color{darkblue}\small $\Lambda_+$}
    \put(57,54){\color{darkblue}\small $\Lambda_+^{-\ve}$}

	\put(93,55){\tiny $(-1)$}
	\put(93,61.5){\tiny \textcolor{darkgreen}{$(-1)$}}
	\put(93,59){\tiny \textcolor{darkblue}{$(+1)$}}
	
	\put(93,18){\tiny $(-1)$}
	\put(93,24.5){\tiny \textcolor{darkgreen}{$(-1)$}}
	\put(93,22){\tiny \textcolor{darkblue}{$(+1)$}}
	
	\put(42,24.5){\tiny \textcolor{darkgreen}{$(-1)$}}
	\put(42,55){\tiny $(-1)$}
	
	\put(17, 74){\small (Bypass data)}
	\put(67, 74){\small (Attach bypass)}
	
	\end{overpic}
	\vskip-.3in
	\caption{The clasping bypass $B_{\mathrm{c}}$ in the proof of \cref{prop:clasping_bypass}. In the top left, the blue Lagrangian disk is a isotopic to the co-core of the positive Weinstein $n$-handle in $R_+$ attached along the black Legendrian. When $2n\geq 6$, all diagrams are $S^{n-2}$-spun around the central vertical axis.}
	\label{fig:clasp_move}
\end{figure}

Attaching the $R_+$-centric bypass produces the decorated Legendrian diagram in the top right, which, after Legendrian Reidemeister moves, is equivalent to the decorated diagram in the bottom right. (In the case that the two strands belong to different Legendrians, the blue and green strands parallel to the upper strand are not present in the second and third frames.) Finally, by \cref{lemma:trivial_bypass_lemma} the pair of contact $n$- and ($n+1$)-handles attached along the black and blue Legendrians, respectively, forms a trivial bypass. The handles can be canceled, leaving the desired surgery diagram on the bottom left. (In the different strands case, the resulting surgery diagram is identical but the concave-up strand is the original black Legendrian, rather than a new green Legendrian.) Up to corner rounding, we have produced the contact handlebody $H_1 = (W_1 \times [-1,1]_t,\, \ker(dt + \lambda_1))$ as desired. This proves the existence of the clasping bypass. 

The unclasping bypass is similar. Beginning now with the handle presentation of $W_1$ on the right side of \cref{fig:clasp}, we again take as positive data $(\Lambda_+; D_+)$ a positive Reeb shift of the belt-sphere and co-core of the positive Weinstein $n$-handle attached to $R_+$ along the black Legendrian, and as negative data a standard Lagrangian disk filling $(\Lambda_-; D_-)$ in $R_-$ of the indicated red Legendrian in \cref{fig:unclasp_move}. After attaching the $R_+$-centric bypass and isotoping, the pair of contact $n$- and ($n+1$)-handles attached along the black and blue Legendrians, respectively, forms a trivial bypass. This leaves us with the desired contact handlebody $H_0 = (W_0 \times [-1,1]_t,\, \ker(dt + \lambda_0))$, and proves the existence of the unclasping bypass. 
\end{proof}

\begin{figure}[ht]
	\centering
	\begin{overpic}[scale=.38]{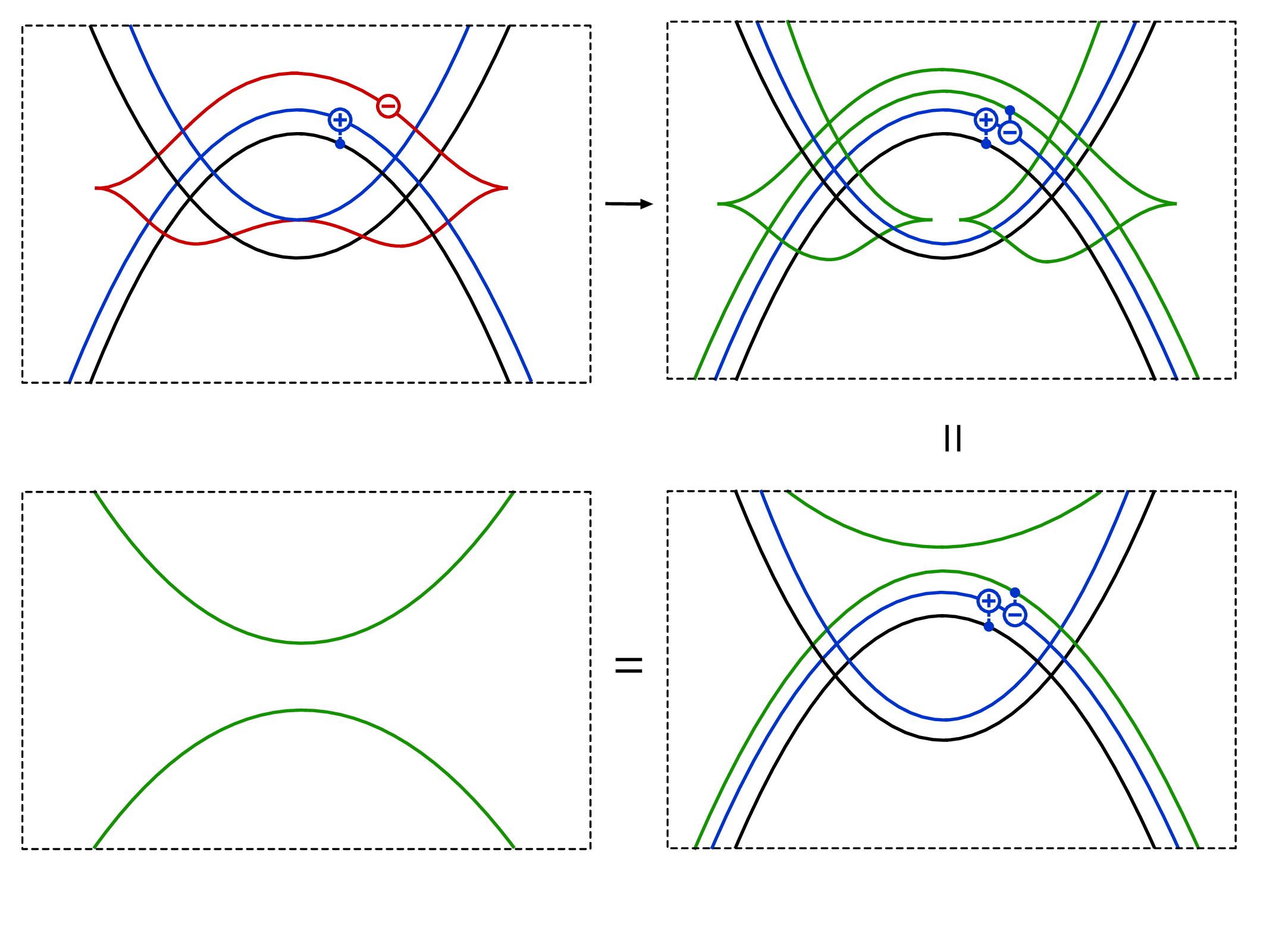}
	\put(11,51){\small $\Lambda$}
	\put(3,60){\color{darkred}\small $\Lambda_-$}
	\put(70,70.5){\small \textcolor{darkgreen}{$\Lambda_-\uplus \Lambda_+$}}
	\put(6.5,55.5){\color{darkblue}\small $\Lambda_+$}
    \put(55,54){\color{darkblue}\small $\Lambda_+^{\ve}$}

	\put(93,55){\tiny $(-1)$}
	\put(93,61.5){\tiny \textcolor{darkgreen}{$(-1)$}}
	\put(93,59){\tiny \textcolor{darkblue}{$(+1)$}}
	
	\put(93,18){\tiny $(-1)$}
	\put(93,24.5){\tiny \textcolor{darkgreen}{$(-1)$}}
	\put(93,22){\tiny \textcolor{darkblue}{$(+1)$}}
	
	\put(42,24.5){\tiny \textcolor{darkgreen}{$(-1)$}}
	\put(42,55){\tiny $(-1)$}
	
	\put(17, 74){\small (Bypass data)}
	\put(67, 74){\small (Attach bypass)}
	
	\end{overpic}
	\vskip-.3in
	\caption{The unclasping bypass $B_{\mathrm{u}}$ in the proof of \cref{prop:clasping_bypass}. When $2n\geq 6$, all diagrams are $S^{n-2}$-spun around the central vertical axis.}
	\label{fig:unclasp_move}
\end{figure}

The second bypass move describes how to stabilize, as well as destabilize, a Legendrian attaching sphere in the sense of \cite[Section 4.3]{ekholm2005non}; see also \cite{eliashberg1990topological}. Note that in case $2n=4$, this stabilization corresponds to stabilizing a Legendrian knot both positively and negatively.

\begin{proposition}[The stabilizing bypass]\label{prop:stabilizing_bypass}
Let $(W_0, \lambda_0, \phi_0)$ be a Weinstein domain of dimension $2n\geq 4$. Suppose that in a surgery diagram for $W_0$ there is a chart with front projection given by the left side of \cref{fig:stab_move}. Let $(W_1,\lambda_1, \phi_1)$ be the Weinstein domain obtained by modifying the surgery diagram as in the right side of \cref{fig:stab_move}. Finally, let $H_i := W_i\times [-1,1]$, $i=0,1$, be the contact handlebody based over $W_i$. Then there are bypass attachments $B_{\mathrm{s}}$ and $B_{\mathrm{d}}$ such that 
\begin{align*}
    H_0 \cup B_{\mathrm{s}} &\cong H_1, \\
    H_1 \cup B_{\mathrm{d}} &\cong H_0.
\end{align*}
Here, $\cong$ indicates contactomorphism relative to the boundary. Moreover, when $2n\geq 6$, the Weinstein hypersurfaces $W_i \times \{0\}$, $i=0,1$, are smoothly isotopic under the above contactomorphisms.
\end{proposition}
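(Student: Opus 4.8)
The plan is to produce the two bypasses by the mechanism used for \cref{prop:clasping_bypass}, and then to treat the smooth isotopy claim separately, since that is where the real work lies. For the existence of $B_{\mathrm{s}}$ and $B_{\mathrm{d}}$, I would first smooth corners to present $H_0=(W_0\times[-1,1]_t,\ker(dt+\lambda_0))$ with convex boundary $\Sigma=R_+\cup\Gamma\cup R_-$, so that the surgery diagram for $W_0$ induces Weinstein handle presentations of both $R_+$ and $R_-$. As in the clasping case, I would take $(\Lambda_+;D_+)$ to be a small positive-time Reeb shift of the belt sphere and cocore of the critical Weinstein $n$-handle being modified, viewed in $R_+$, and $(\Lambda_-;D_-)$ a standard Lagrangian disk filling in $R_-$ determined by the chart of \cref{fig:stab_move}. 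The goal is to arrange this data so that, after attaching the $R_+$-centric model of the bypass and performing the handleslides of \cref{fig:slides} together with Legendrian Reidemeister moves, the preexisting critical handle and the $(n+1)$-handle of the bypass form a pair satisfying condition (TB1) of \cref{lemma:trivial_bypass_lemma}. Cancelling this trivial pair would leave the bypass $n$-handle attached along the stabilized sphere, i.e.\ exactly the diagram on the right of \cref{fig:stab_move}, giving $H_0\cup B_{\mathrm{s}}\cong H_1$ rel boundary. The destabilizing bypass $B_{\mathrm{d}}$ would run the identical argument in reverse, starting from the presentation of $W_1$.

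To reduce the moreover statement, assume $2n\geq 6$. By construction $W_0$ and $W_1$ have identical handle decompositions except that one critical handle is attached along a Legendrian $\Lambda^{n-1}$ for $W_0$ and along its stabilization $\Lambda'$ for $W_1$; this is the flexibilization \eqref{part:flexibilization} discussed in \cref{subsec:flex}. Since $n-1\geq 2$, stabilizing amounts to installing a loose chart in $\Lambda^{n-1}$, which is a \emph{formal} Legendrian isotopy. Forgetting the contact structure, such an isotopy is supported in a Darboux chart and carries the framed attaching sphere of the critical handle to that of $W_1$ through a smooth isotopy, so the framed attaching data of the two domains agree up to smooth isotopy. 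As all remaining handles coincide, this exhibits a diffeomorphism $W_0\cong W_1$ smoothly isotopic to the identity.

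The principal remaining step is to realize this equivalence as a smooth isotopy of the \emph{embedded} hypersurfaces $W_i\times\{0\}$ compatible with the contactomorphisms. I would argue through the skeleton: each $W_i\times\{0\}$ is smoothly isotopic in $H_i$ to a regular neighborhood of its isotropic skeleton $\mathrm{Skel}(W_i)\times\{0\}$, and a Weinstein hypersurface is recovered up to smooth isotopy from such a neighborhood. The skeleta $\mathrm{Skel}(W_0)$ and $\mathrm{Skel}(W_1)$ coincide away from the cone on the modified handle, where $\Lambda$ is replaced by $\Lambda'$; since the loose chart is localized and formally, hence smoothly, trivial rel boundary when $n-1\geq 2$, the induced isotopy of skeleta is supported near that chart. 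Transporting regular neighborhoods through the smooth handle cancellation that identifies $H_0\cup B_{\mathrm{s}}$ with $H_1$ should then yield the desired smooth isotopy from the image of $W_0\times\{0\}$ to $W_1\times\{0\}$ inside $H_1$.

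The hard part will be exactly this last step: one must verify that the smooth triviality of the loose chart, together with the fact that the bypass is a \emph{smoothly} canceling handle pair, lets the core hypersurface be carried through the contactomorphism without changing its smooth isotopy type. This is where $2n\geq 6$ is indispensable --- when $2n=4$ the attaching sphere is a Legendrian knot, and stabilizing it both positively and negatively changes its contact (Thurston--Bennequin) framing, hence the smooth isotopy type of the attached $2$-handle, so the conclusion genuinely fails.
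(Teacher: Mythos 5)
Your construction of the stabilizing bypass $B_{\mathrm{s}}$ matches the paper's: the same bypass data (Reeb-shifted belt sphere/cocore in $R_+$, standard unknotted disk filling in $R_-$), the same $R_+$-centric attachment, and the same cancellation via (TB1) of \cref{lemma:trivial_bypass_lemma}. However, there are two genuine gaps. First, the destabilizing bypass is \emph{not} obtained by running "the identical argument in reverse": a bypass attachment is not an invertible operation, so you must exhibit new bypass data on $\partial H_1$ whose attachment produces $H_0$, and you must do so respecting the $S^{n-2}$-spin symmetry, which is exactly what makes this direction hard when $2n\geq 6$. The paper does not identify such data directly (it remarks only that this should be possible "in principle"); instead it births a canceling $(n-1)/n$-Weinstein handle pair, performs spin-symmetric handleslides and Reidemeister moves, realizes the $(n-1)$-handle as a $(+1)$-surgery, attaches an \emph{unclasping} bypass from \cref{prop:clasping_bypass}, and then re-cancels the auxiliary pair (\cref{fig:destab}). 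Your proposal contains no substitute for this construction, so the existence of $B_{\mathrm{d}}$ is unproven as written.

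Second, for the "moreover" statement your reduction is the right one (compare the two ambient handles attached to $W_0^{\flat}\times\{0\}$ along $\Lambda_0\times\{0\}$ and $\Lambda_1\times\{0\}$, and note that loosification preserves formal Legendrian data, hence the framing of the attaching spheres), but the crucial step --- that the \emph{cores} of the two ambient handles are smoothly isotopic rel $\partial W_0^{\flat}\times\{0\}$ --- is precisely the step you defer with "should then yield," and transporting regular neighborhoods of skeleta through the handle cancellation does not supply it: the intrinsic formal triviality of the loose chart says nothing about how the new core sits in the ambient $(2n+1)$-manifold. The paper's mechanism is the bypass half-disk (a co-Legendrian in the sense of Huang): by construction of the bypass attachment, the core of the $n$-handle attached along $\Lambda_1$ bounds, together with the Legendrian boundary sum $D_-\uplus_b D_+$, an embedded $(n+1)$-disk foliated by Legendrian disks, which provides an ambient isotopy from that core to $D_-\uplus_b D_+$; since $D_-$ is smoothly unknotted, $D_-\uplus_b D_+$ is smoothly isotopic to $D_+$, which is a parallel copy of the core of the handle attached along $\Lambda_0$ (see \cref{fig:smoothiso}). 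Without this (or an equivalent) geometric input, the smooth isotopy claim is not established. Your observation about why $2n=4$ fails (the double stabilization changes the Thurston--Bennequin framing of the $2$-handle) is correct and agrees with the paper.
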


\begin{figure}[ht]
	\centering
	\begin{overpic}[scale=.36]{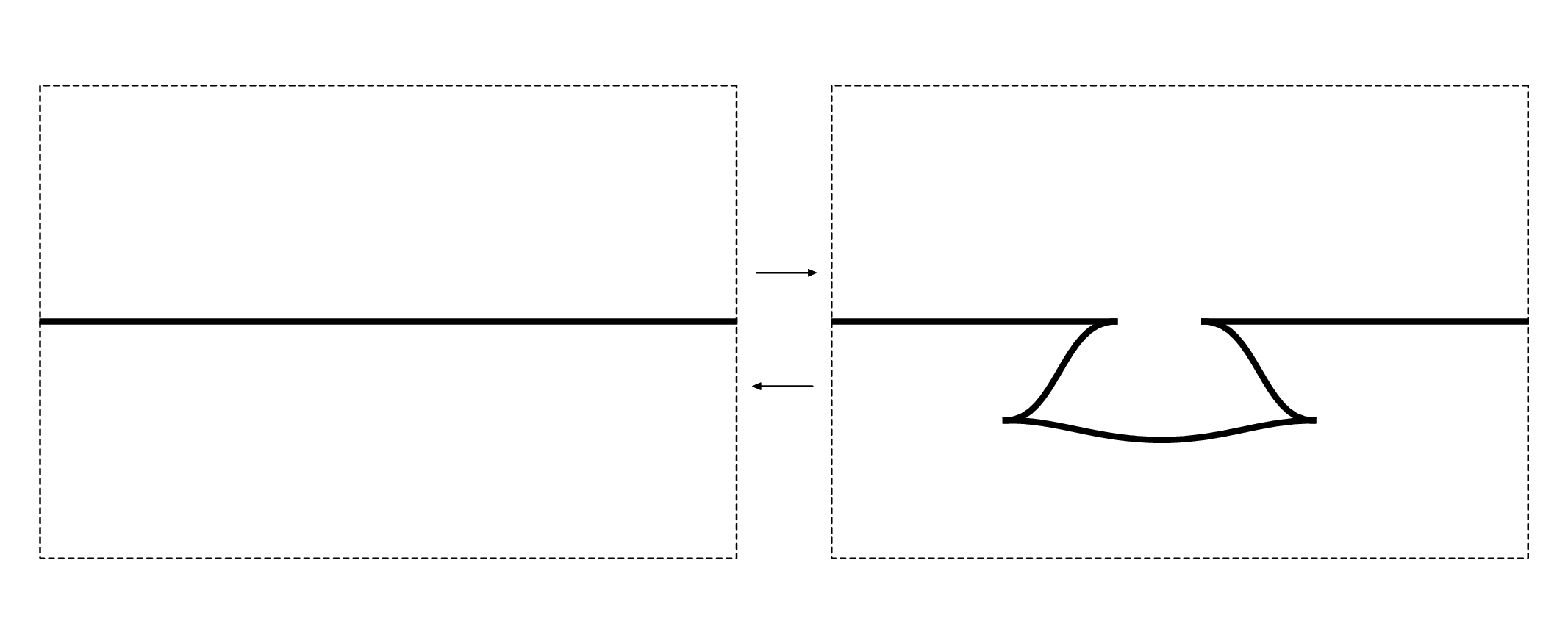}
 
	\put(48.75,24.75){\small $B_{\mathrm{s}}$}
    \put(48.75,14){\small $B_{\mathrm{d}}$}

    \put(41,23){\scriptsize $(-1)$}
    \put(91.5,23){\scriptsize $(-1)$}

    \put(23.75,2.5){$W_0$}
    \put(74.75,2.5){$W_1$}
 
	\end{overpic}
	\caption{The stabilizing and destabilizing bypasses in \cref{prop:stabilizing_bypass}. When $2n\geq 6$, both figures are $S^{n-2}$-spun around the central vertical axis. In particular, the Legendrian on the right is loose.}
	\label{fig:stab_move}
\end{figure}

\begin{proof}
We exhibit the stabilizing bypass $B_{\mathrm{s}}$ directly, in the same way as in the proof of \cref{prop:clasping_bypass}. Let $(\Lambda_+; D_+)$ be given by a small forward-time Reeb isotopy of the belt-sphere and co-core of the positive Weinstein $n$-handle of $R_+$ in question, indicated by the blue Legendrian on the left side of \cref{fig:stab}, and let $(\Lambda_-; D_-)$ be the standard Lagrangian disk filling in $R_-$ of the red Legendrian in the same diagram. Let $B_{\mathrm{s}}$ be the bypass with data $(\Lambda_{\pm}; D_{\pm})$. Attaching the $R_+$-centric bypass yields the decorated surgery diagram in the middle diagram. As in the proof of \cref{prop:clasping_bypass}, the black and blue contact handles pair as a trivial bypass, and thus the handles can be erased, leaving the desired stabilized Legendrian. 

\begin{figure}[ht]
	\centering
	\begin{overpic}[scale=.5]{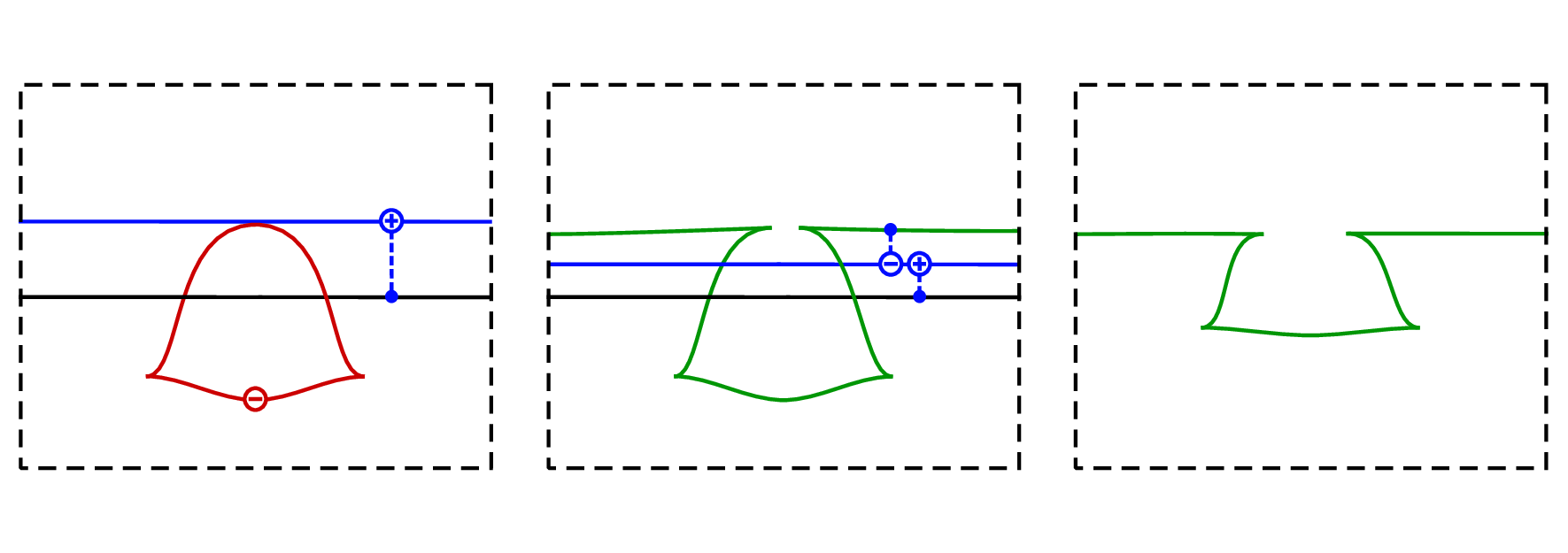}

    \put(26,14){\tiny $(-1)$}
    \put(60,14){\tiny $(-1)$}
    \put(60,22){\tiny \textcolor{darkgreen}{$(-1)$}}
    \put(60,24){\tiny \textcolor{darkblue}{$(+1)$}}
    \put(93.5,22){\tiny \textcolor{darkgreen}{$(-1)$}}
    
	\end{overpic}
	\caption{Identifying the stabilizing bypass directly. When $2n\geq 6$, all diagrams are $S^{n-2}$-spun around the central vertical axis.}
	\label{fig:stab}
\end{figure}

To perform the destabilization in a spin-symmetric manner, we will use a combination of Weinstein homotopy and the unclasping bypass already introduced in \cref{prop:clasping_bypass}. The sequence of steps is given in \cref{fig:destab}. Between panel $1$ and panel $2$ we birth a pair of canceling $n$- and ($n-1$)-handles in the underlying Weinstein domain; the $n$-handle is attached along the yellow Legendrian. In panel $3$ we slide the black Legendrian up across the yellow Legendrian and then simplify with Reidemeister moves to produce panel $4$. In panel $5$ we witness the gray ($n-1$)-handle as a ($+1$)-surgery along the gray unknot, which corresponds to digging out its standard Lagrangian disk filling. Panel $6$ follows from more Reidemeister moves, and panel $7$ is obtained by attaching an unclasping bypass from \cref{prop:clasping_bypass}. Panel $8$ performs a second handleslide of the black Legendrian over the yellow Legendrian, and panel $9$ follows from another sequence of Reidemeister moves. Finally, in panels $10$ and $11$ we re-cancel the yellow and gray pair, leaving the destabilized Legendrian as desired. Note that all Reidemeister moves, slides, and births and deaths of pairs are performed in a spin-symmetric manner and thus are valid in all dimensions. 

\begin{figure}[ht]
	\centering
	\begin{overpic}[scale=.41]{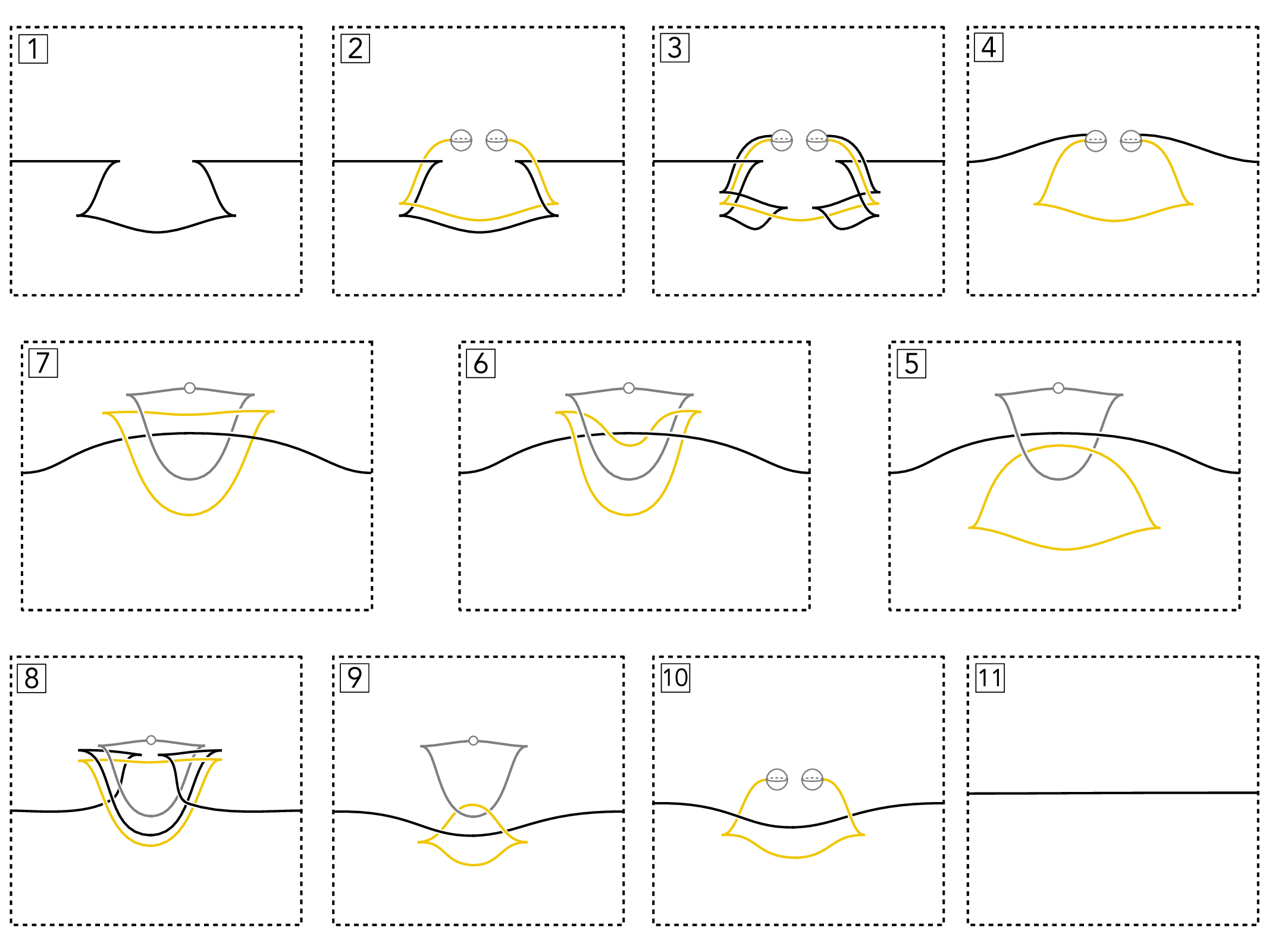}


    \put(19.75,65){\tiny $(-1)$}
    
    \put(45,65){\tiny $(-1)$}
    \put(45,67){\tiny \textcolor{gold}{$(-1)$}}

    \put(70.25,65){\tiny $(-1)$}
    \put(70.25,67){\tiny \textcolor{gold}{$(-1)$}}

    \put(95,65){\tiny $(-1)$}
    \put(95,67){\tiny \textcolor{gold}{$(-1)$}}


    \put(25.25,40){\tiny $(-1)$}
    \put(25.25,42){\tiny \textcolor{gold}{$(-1)$}}
    \put(25.25,44){\tiny \textcolor{gray}{$(+1)$}}

    \put(59.75,40){\tiny $(-1)$}
    \put(59.75,42){\tiny \textcolor{gold}{$(-1)$}}
    \put(59.75,44){\tiny \textcolor{gray}{$(+1)$}}

    \put(93.5,40){\tiny $(-1)$}
    \put(93.5,42){\tiny \textcolor{gold}{$(-1)$}}
    \put(93.5,44){\tiny \textcolor{gray}{$(+1)$}}


    \put(19.75,14){\tiny $(-1)$}
    \put(19.75,16){\tiny \textcolor{gold}{$(-1)$}}
    \put(19.75,18){\tiny \textcolor{gray}{$(+1)$}}

    \put(45,14){\tiny $(-1)$}
    \put(45,16){\tiny \textcolor{gold}{$(-1)$}}
    \put(45,18){\tiny \textcolor{gray}{$(+1)$}}

    \put(70.25,14){\tiny $(-1)$}
    \put(70.25,16){\tiny \textcolor{gold}{$(-1)$}}

    \put(95,14){\tiny $(-1)$}
    
	\end{overpic}
	\caption{Performing the destabilizing bypass move via Weinstein homotopies and unclasping. When $2n\geq 6$, all diagrams are $S^{n-2}$-spun around the central vertical axis.}
	\label{fig:destab}
\end{figure}

It remains to show that $W_0$ and $W_1$ are smoothly isotopic when $2n\geq 6$. We consider the case of attaching a stabilizing bypass $B_{\mathrm{s}}$ to the contact handlebody over $W_0$. Let $\Lambda_0$ denote the attaching sphere in the far left panel of \cref{fig:stab} and let $W_0^{\flat}\subset W_0$ be the Weinstein subdomain obtained by removing the associated handle. Let $\Lambda_1$ denote the attaching sphere in the far right panel, i.e., the stabilization of $\Lambda_0$. Observe that the embedded hypersurface $W_1\times \{0\}$ is obtained by first removing the (ambient) handle attached to $W_0^{\flat}\times \{0\}$ along $\Lambda_0\times \{0\}$ and subsequently attaching the (ambient) handle along $\Lambda_1\times \{0\}$. This process leaves the embedding of $W_0^{\flat}\times\{0\}$ invariant, so it suffices to prove that the ambient handles attached along $\Lambda_i\times \{0\},i=0,1$, are smoothly isotopic. 

\vspace{2mm}
\textit{Claim 1: The cores are smoothly isotopic rel $\partial W_0^{\flat}\times \{0\}$.}
\vspace{2mm}

By construction of a bypass attachment, the core of the $n$-handle attached along $\Lambda_1$ bounds, together with $D_-\uplus_b D_+$, an ($n+1$)-dimensional disk. Here, $\uplus_b$ is a Legendrian operation that induces a smooth boundary connect sum; see the proof of \cref{thm:moves_for_obds} for further discussion. This disk is what is known as the \textit{bypass half-disk} in dimension $3$, and more generally is an example of a \textit{co-Legendrian} as introduced by Huang \cite{huang2020colegendrians}. In particular, the core of the (ambient) handle attached along $\Lambda_1 \times \{0\}$ is smoothly isotopic to $D_-\uplus_b D_+$; see \cref{fig:smoothiso} for a schematic picture. 

\begin{figure}[ht]
	\centering
	\begin{overpic}[scale=.4]{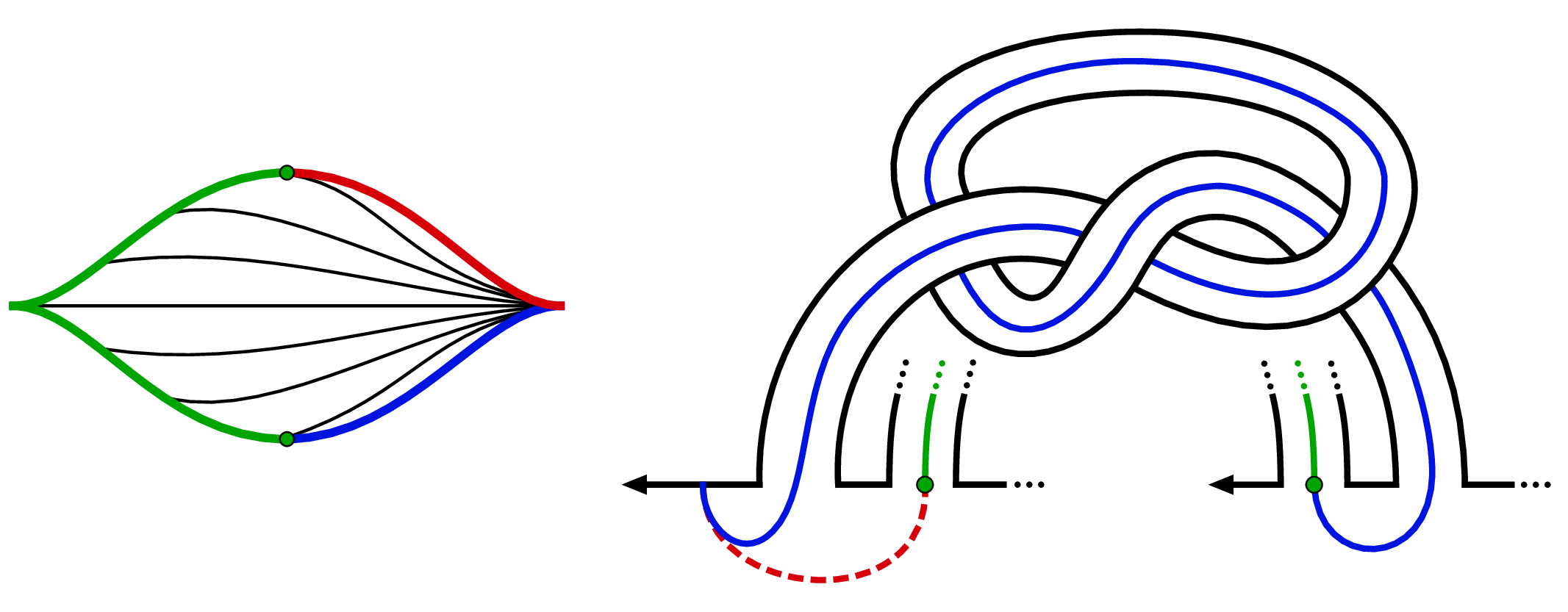}

    \put(28,12){\small \textcolor{darkblue}{$D_+$}}
    \put(28,26){\small \textcolor{darkred}{$D_-$}}

	\end{overpic}
	\caption{A bypass half-disk on the left. The core of the bypass $n$-handle attached along $\Lambda_1$ is in green, and the half-disk is foliated by Legendrian disks from $D_+$ to $D_-$. On the right is the corresponding schematic picture for the ambient handles.}
	\label{fig:smoothiso}
\end{figure}

In our case, $D_-$ is a smoothly unknotted disk, so $D_-\uplus_b D_+$ is smoothly isotopic to $D_+$. On the other hand, $D_+$ fills a parallel copy of $\Lambda_0$ and has geometric intersection number $1$ with the co-core of the handle attached along $\Lambda_0$, so it is smoothly isotopic to the core of the (ambient) handle attached along $\Lambda_0 \times \{0\}$. The bypass-half disk then builds the desired smooth ambient isotopy rel $\partial W_0^{\flat}\times \{0\}$ between the cores of the (ambient) Weinstein $n$-handles. 

\vspace{2mm}
\textit{Claim 2: The framed attaching spheres are framed isotopic in $\partial W_0^{\flat}\times \{0\}$.}
\vspace{2mm}

That $\Lambda_0\times \{0\}$ and $\Lambda_1\times \{0\}$ are smoothly isotopic in $\partial W_0^{\flat}\times \{0\}$ is immediate from \cref{fig:stab}. That they are isotopic as framed spheres follows from the fact that loosifying a Legendrian submanifold preserves its formal data \cite{murphy2012loosev2}. In particular, the framing of the Weinstein handle attachment (which is specified by the formal Legendrian data) is preserved.

\vspace{2mm}

Claim 1 and Claim 2 together imply that the ambient handles attached to $W_0^{\flat}\times\{0\}$ along $\Lambda_i\times \{0\},i=0,1$, are smoothly isotopic. This completes the proof.
\end{proof}

\begin{remark}
We include a number of remarks regarding the (de)stabilizing bypass. 
\begin{enumerate}
    \item A second spin-symmetric and valid proof of the stabilization move is to run the steps in \cref{fig:destab} in reverse, leveraging a clasping bypass. Likewise, in principle it should be possible to identify the destabilizing bypass data directly. 

    \item When $2n=4$, so that the Legendrians in question are knots, it is straightforward to witness double (de)stabilization via (un)clasping and a simpler sequence of Reidemeister moves that are not spin-symmetric. 

    \item The (de)stabilizing bypass attachments are similar to the wrinkling bypass attachments described in \cite[\S 5]{BHH23}.

    \item The assumption $2n\geq 6$ in the "moreover" statement is necessary. When $2n=4$, the Weinstein hypersurfaces before and after a stabilization move will, in general, not even be diffeomorphic; indeed, double stabilization changes $\mathrm{tb}$ and consequently the framing of the $2$-handle.
\end{enumerate}
\end{remark}

\subsection{Bypass moves in dimension $5$}\label{subsec:moves_dim5}

The (un)clasping and double (de)stabilization moves are valid in dimension $5$ and above. Here we record one additional move in dimension $5$ that has no meaning in higher dimensions, which is a crossing change. It is a consequence of double (de)stabilization and (un)clasping. 

\begin{proposition}[The crossing change bypass move]\label{prop:crossing_change}
Let $(W^{4}_0, \lambda_0, \phi_0)$ be a Weinstein domain. Suppose that in a surgery diagram for $W_0$ there is a chart with front projection given by the left side of \cref{fig:crossing}. Let $(W_1,\lambda_1, \phi_1)$ be the Weinstein domain obtained by modifying the surgery diagram as in the right side of \cref{fig:crossing}. Finally, let $H_i := W_i\times [-1,1]$, $i=0,1$, be the contact handlebody based over $W_i$. Then there are bypass sequences $B_{\mathrm{x}} := B_{\mathrm{u}}\circ B_{\mathrm{s}}$ and $B_{\mathrm{y}}:= B_{\mathrm{d}}\circ B_{\mathrm{c}}$ such that 
\begin{align*}
    H_0 \cup B_{\mathrm{x}} &\cong H_1, \\
    H_1 \cup B_{\mathrm{y}} &\cong H_0.
\end{align*}
Here, $\cong$ indicates contactomorphism relative to the boundary. 
\end{proposition}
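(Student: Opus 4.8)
The plan is to treat the definitions $B_{\mathrm{x}} := B_{\mathrm{u}}\circ B_{\mathrm{s}}$ and $B_{\mathrm{y}} := B_{\mathrm{d}}\circ B_{\mathrm{c}}$ at face value and simply verify that these composites realize the crossing change of \cref{fig:crossing} at the level of front projections. Since contactomorphism rel boundary is transitive, it suffices to produce an intermediate Weinstein domain $W_{1/2}$ together with rel-boundary contactomorphisms $H_0 \cup B_{\mathrm{s}} \cong H_{1/2}$ and $H_{1/2}\cup B_{\mathrm{u}} \cong H_1$, where $H_{1/2} = W_{1/2}\times[-1,1]$, and analogously for $B_{\mathrm{y}}$. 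The entire argument takes place inside the fixed local chart of the surgery diagram, and because $2n=4$ there is no spin symmetry to respect, so every manipulation reduces to an ordinary Legendrian front move on a knot diagram. In particular, here $B_{\mathrm{s}}$ is the double (positive and negative) stabilization of the relevant Legendrian strand, and $B_{\mathrm{u}}$ removes a clasp, exactly as recorded in \cref{prop:stabilizing_bypass} and \cref{prop:clasping_bypass}.

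Concretely, I would proceed in three steps. First, apply \cref{prop:stabilizing_bypass} to the strand of the left panel of \cref{fig:crossing}: attaching $B_{\mathrm{s}}$ yields a contact handlebody $H_{1/2}$ whose diagram is obtained by replacing that strand with its double stabilization, as in \cref{fig:stab_move}. Second, I would identify within the stabilized front the local two-strand configuration matching the hypothesis of \cref{prop:clasping_bypass} --- the zig-zag produced by the stabilization is precisely what can be isotoped into a clasped pair near the crossing. Third, attaching the unclasping bypass $B_{\mathrm{u}}$ removes this clasp, and after a short sequence of Legendrian Reidemeister moves the resulting front is exactly the right panel of \cref{fig:crossing}, i.e.\ the diagram for $W_1$ with the crossing changed. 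The reverse composite $B_{\mathrm{y}} = B_{\mathrm{d}}\circ B_{\mathrm{c}}$ is handled by running the same local pictures backwards: the clasping bypass $B_{\mathrm{c}}$ installs a clasp at the (now changed) crossing, and the destabilizing bypass $B_{\mathrm{d}}$ absorbs the zig-zag, returning the diagram for $W_0$.

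The only real obstacle is the diagrammatic bookkeeping in the middle step: one must check that the double stabilization genuinely produces a chart to which \cref{prop:clasping_bypass} legitimately applies, and that the composite net effect is a \emph{single} crossing change rather than some other local modification. This amounts to drawing the intermediate front carefully and tracking it through the intervening Reidemeister moves, while confirming that the classical-invariant data is consistent --- the $\mathrm{tb}$-shift introduced by the double stabilization must be exactly absorbed by the clasp change so that the final framing of the $2$-handle agrees with that dictated by the crossing change. Because both $B_{\mathrm{s}}$ and $B_{\mathrm{u}}$ are honest bypass attachments yielding honest contact handlebodies, no new contact-geometric input is required beyond \cref{prop:stabilizing_bypass} and \cref{prop:clasping_bypass}; the verification is purely a matter of matching the local pictures, and the proposition follows once the composite fronts are seen to coincide with \cref{fig:crossing}.
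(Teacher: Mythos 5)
Your proposal is correct and follows essentially the same route as the paper: the paper's proof attaches the double-stabilizing bypass $B_{\mathrm{s}}$, performs Legendrian Reidemeister moves to exhibit a clasped configuration, attaches the unclasping bypass $B_{\mathrm{u}}$, and concludes with further Reidemeister moves (all recorded in \cref{fig:crossing_move}), with the reverse composite $B_{\mathrm{d}}\circ B_{\mathrm{c}}$ handled symmetrically. The diagrammatic bookkeeping you flag as the remaining work is exactly what the paper's figure supplies.
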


\begin{figure}[ht]
	\centering
	\begin{overpic}[scale=.36]{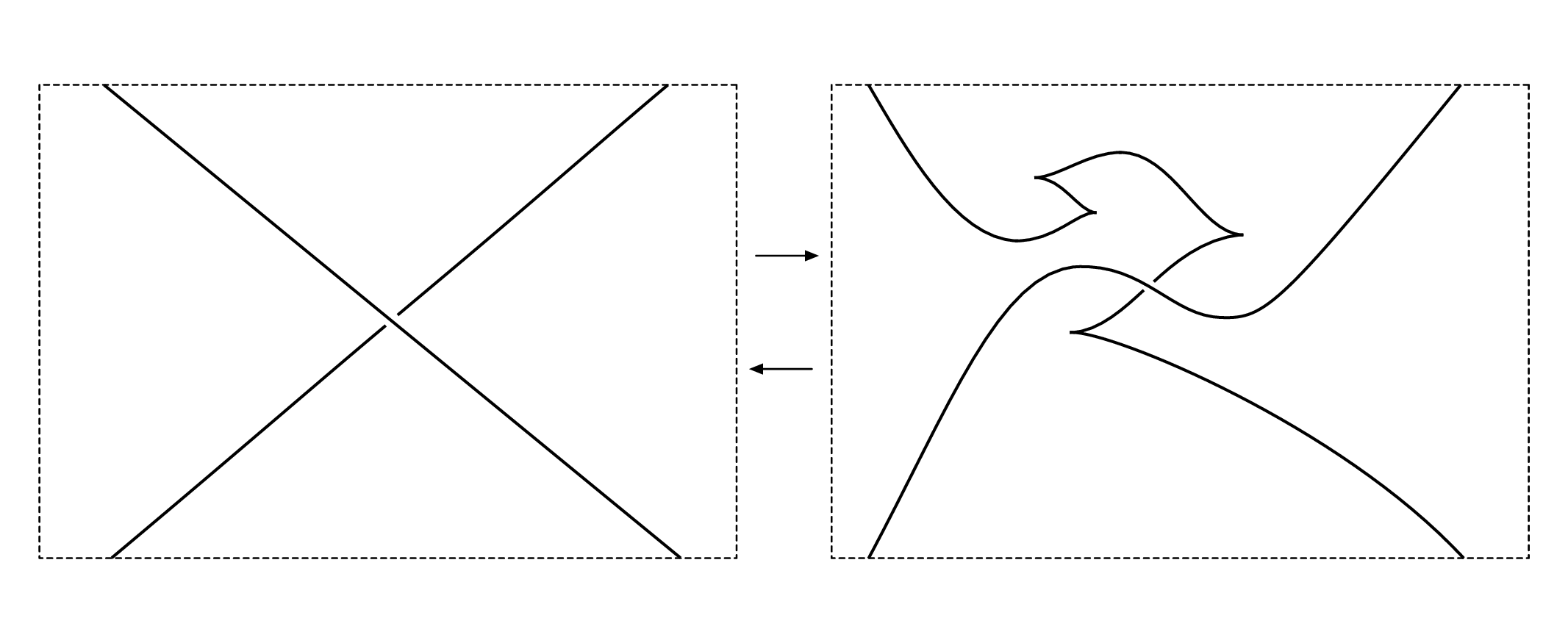}
 
	\put(48.75,26){\small $B_{\mathrm{x}}$}
    \put(48.75,15){\small $B_{\mathrm{y}}$}

    \put(41.5,30){\scriptsize $(-1)$}
    \put(92,28){\scriptsize $(-1)$}

    \put(23.75,2.5){$W_0$}
    \put(74.75,2.5){$W_1$}
 
	\end{overpic}
	\caption{The crossing change move in \cref{prop:crossing_change}.}
	\label{fig:crossing}
\end{figure}

\begin{proof}
The crossing change from left to right in \cref{fig:crossing} is achieved with two bypass attachments. The sequence is given by \cref{fig:crossing_move}. Between the first two panels we attach a double stabilization bypass, and between the fourth and fifth panels we attach an unclasping bypass. The rest of the panels are related by Reidemeister moves. 
\end{proof}

\begin{figure}[ht]
	\centering
	\begin{overpic}[scale=.41]{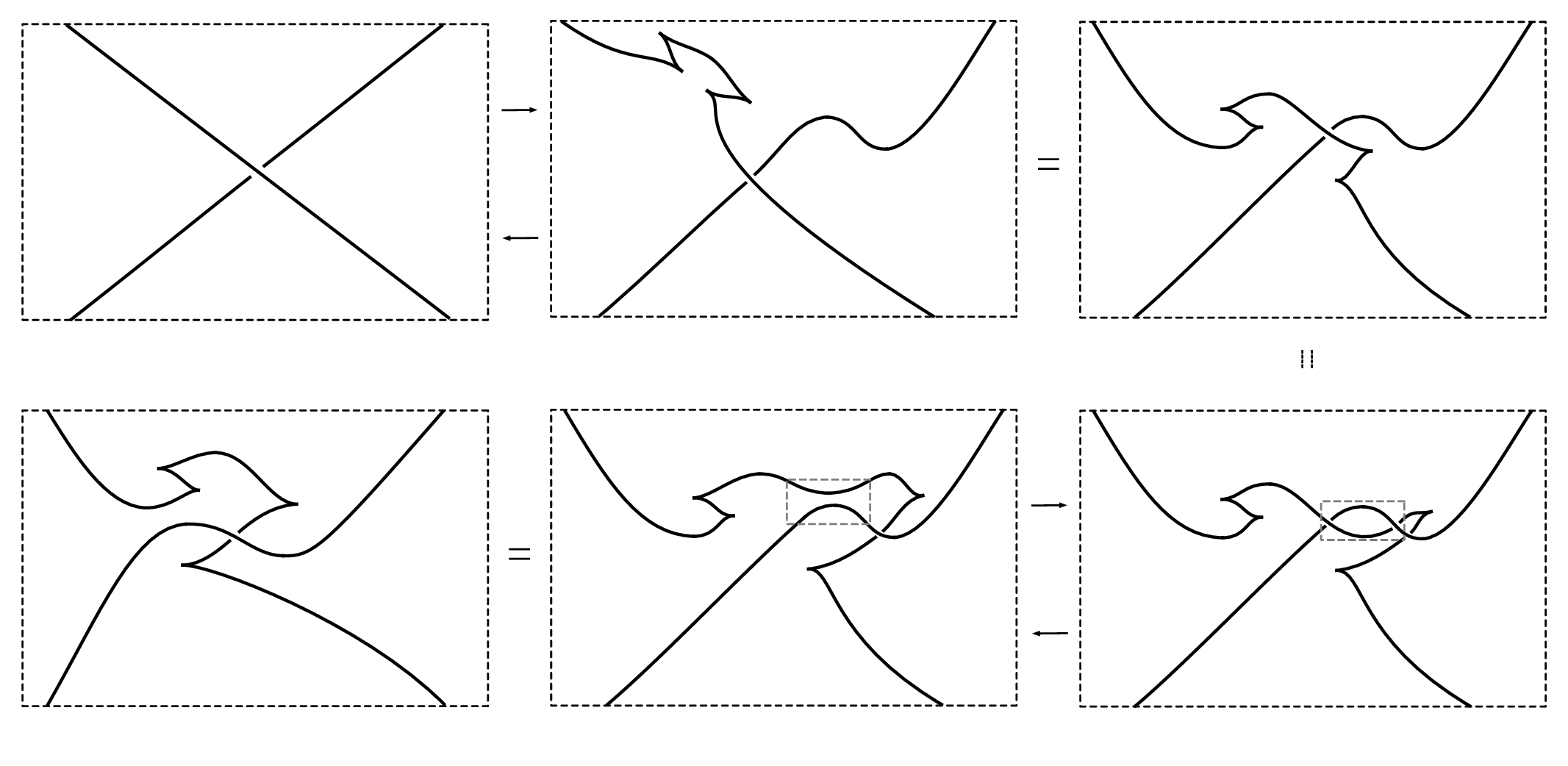}
 
	\put(94.25,40){\tiny $(-1)$}
	\put(94.25,15){\tiny $(-1)$}

    \put(60.5,40){\tiny $(-1)$}
	\put(60.5,15){\tiny $(-1)$}

    \put(26,40){\tiny $(-1)$}
	\put(26,15){\tiny $(-1)$}
	
	\end{overpic}
	\vskip-.2in
	\caption{Changing a crossing via double (de)stabilization and (un)clasping.}
	\label{fig:crossing_move}
\end{figure}

\begin{remark}
In dimension $5$, the stabilizing bypass move from \cref{prop:stabilizing_bypass} is a relative version of Move I from \cite{Ding2012Diagrams}, and moreover recovers the full move for closed manifolds. Likewise, the crossing change bypass move recovers Move II from \cite{Ding2012Diagrams}, though we point out that our move requires only two stabilizations, rather than four. 
\end{remark}
\section{Appearance of bypass moves in dimension 3}\label{sec:appearance_dim_3}

Here we prove \cref{thm:main_dim3}, which describes the clasp move in dimension $3$ along with some occurrences in nature. We break the proof of \cref{thm:main_dim3} across multiple subsections for ease of reference. In \cref{subsec:3dclasp} we identify the clasp move, in \cref{subsec:slopes} we use the clasp move to change the dividing slope of a standard torus, in \cref{subsec:destab_arc} to destabilize a Legendrian arc, and in \cref{subsec:legendrian_graphs} to change the orientation structure of a Legendrian graph. Finally, in \cref{subsec:convex_heegaard} we prove \cref{cor:heegaard} on convex Heegaard splittings.

\subsection{The clasp move in dimension $3$}\label{subsec:3dclasp}

First, we define the bypass data for the clasp move and show that it modifies the contact handlebody as in \cref{fig:3dclasp}. 

\begin{proof}[Proof of existence of bypass attachment in \cref{thm:main_dim3}.]
We mimic the bypass attachment data from \cref{prop:clasping_bypass}. In reference to the disk-band presentation on the left side of \cref{fig:3dclasp}, define a bypass arc on $\partial H_0$ as follows. Let $D_- \subset R_-$ be a small semicircle in the disk centered on the leftmost foot, and let $D_+\subset R_+$ be a small semicircle in the disk centered on the rightmost foot such that $D_+\cap D_-$ is a single point between the two feet. After sliding the positive endpoint of $D_+$ up over the band associated to the rightmost foot, we see that $D_+$ is a positive push-off of the co-core of the band. Let $B$ be the bypass with data $D_{\pm}$. To show that $B$ achieves the desired effect, there are two cases to analyze.

\begin{figure}[ht]
	\centering
	\begin{overpic}[scale=.38]{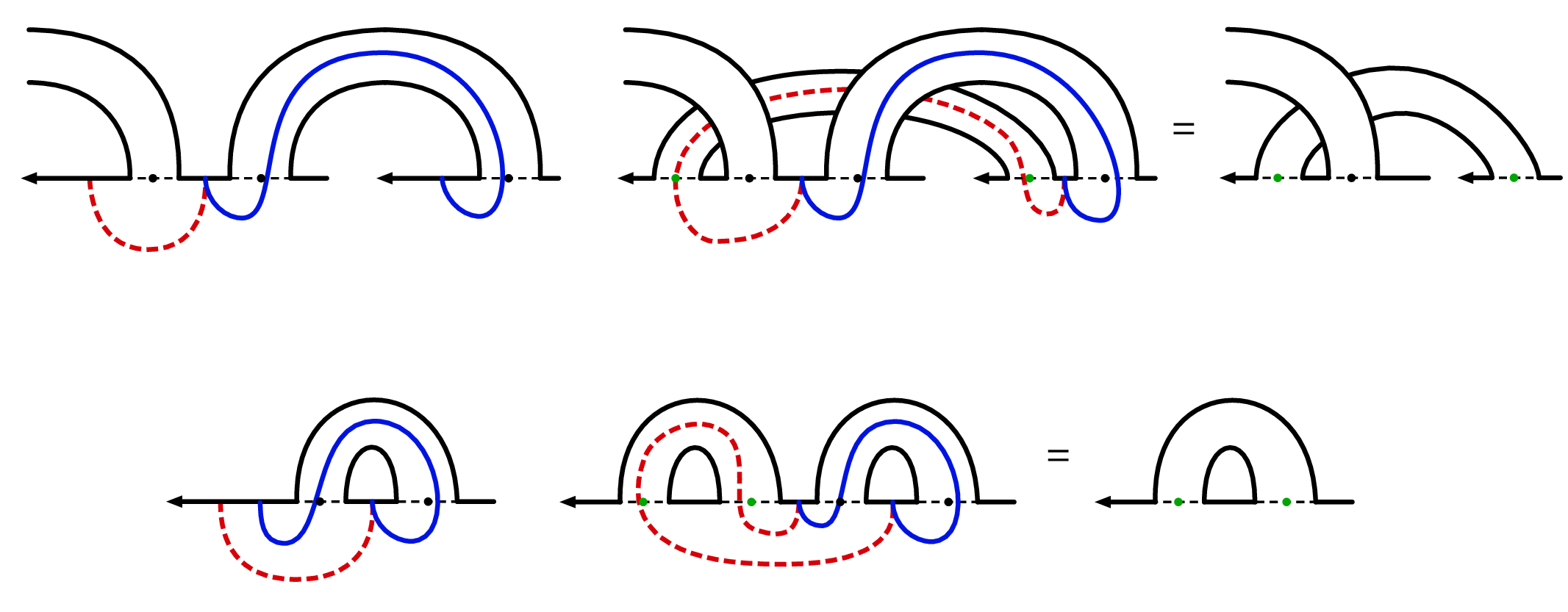}
 
	\end{overpic}
	\caption{The proof of $3$-dimensional clasp move. The top row is Case I, where the feet are associated to different bands, and the bottom row is Case II where the feet are part of the same band. Recall that we are working with abstract handlebodies and bypasses, so the embedding of the bands in the figure is meaningless.}
	\label{fig:3dclaspproof}
\end{figure}

\vspace{2mm}
\noindent \emph{Case I: The feet are associated to different bands.}
\vspace{2mm}

This is given by the top row of \cref{fig:3dclaspproof}. In the leftmost panel of the figure we have identified $D_+$ and $D_-$. The middle panel depicts the result of the bypass attachment, which has two elements: a contact $1$-handle attached along $\partial (D_+^{-\ve} \cup D_-^{\ve})$, which effectively attaches a band to the underlying surface and a contact $2$-handle attached along the circle which is the union of $D_+^{-\ve}$ and the result of sliding $D_-^{\ve}$ down across the contact $1$-handle. By \cref{lemma:trivial_bypass_lemma}, the contact $1$-handle associated to the original rightmost band then pairs with the contact $2$-handle as a trivial bypass. Canceling the handles then leaves a contact handlebody over $W_1$, as desired. 

\vspace{2mm}
\noindent \emph{Case II: The feet are associated to the same band.}
\vspace{2mm}

Observe that when the two feet on the left side of \cref{fig:3dclasp} are associated to the same band, switching their attaching regions does nothing to the abstract surface. Thus, it suffices to show that attaching $B$ in this case produces a contact handlebody over the same surface. The same argument as in Case I then follows through word for word; visual inspection of the bottom row of \cref{fig:3dclaspproof} reveals that the resulting surface is the same. 

We caution the reader that although the clasp move in this case does not change the underlying abstract surface type, $B$ is not a trivial bypass. For a clarifying example, see the discussion of slope modification in the following subsection.  
\end{proof}

\subsection{Modifying slopes of dividing curves}\label{subsec:slopes}

In many classifications (e.g., \cite{honda2000classification,honda2000classification2}) of tight contact structures on 3-manifolds, the fundamental building block is the \emph{basic slice}, which is one of two tight contact structures on the thickened torus $T^2\times I$.  The basic slice owes its status to the fact that it is the contact cobordism, as described in \cref{theorem:bypass_attachment}, resulting from particular bypass attachment data on a convex $T^2$ whose dividing set has two components.  Indeed, the basic slice corresponds to the unique nontrivial bypass attachment data on such a $T^2$ which produces a tight contact structure on $T^2\times I$.  Given a contact handlebody $S^1\times D^2$, the clasp move produces a contact cobordism $(T^2\times I,\xi)$, and in this subsection we verify that this cobordism is a basic slice.

\begin{proof}[Proof of \eqref{part:slope} of \cref{thm:main_dim3}.]

Let $W_0$ be an annulus and let $H_0 := W_0\times [-1,1]$ be the contact handlebody over $W_0$. Then $H_0$ is a solid torus and $\Sigma_0 := \partial H_0$ is a standard convex torus with two dividing curves. Viewing $W_0$ as a disk with a single band attached as on the left side of \cref{fig:slope}, we identify the bypass arc $D_+\cup D_- \subset \Sigma_0$ as indicated. Note that, by Case II of \cref{subsec:3dclasp}, this bypass performs a self-clasp of the band; compare the left side of \cref{fig:slope} and the lower left panel of \cref{fig:3dclaspproof}.

\begin{figure}[ht]
	\centering
	\begin{overpic}[scale=.41]{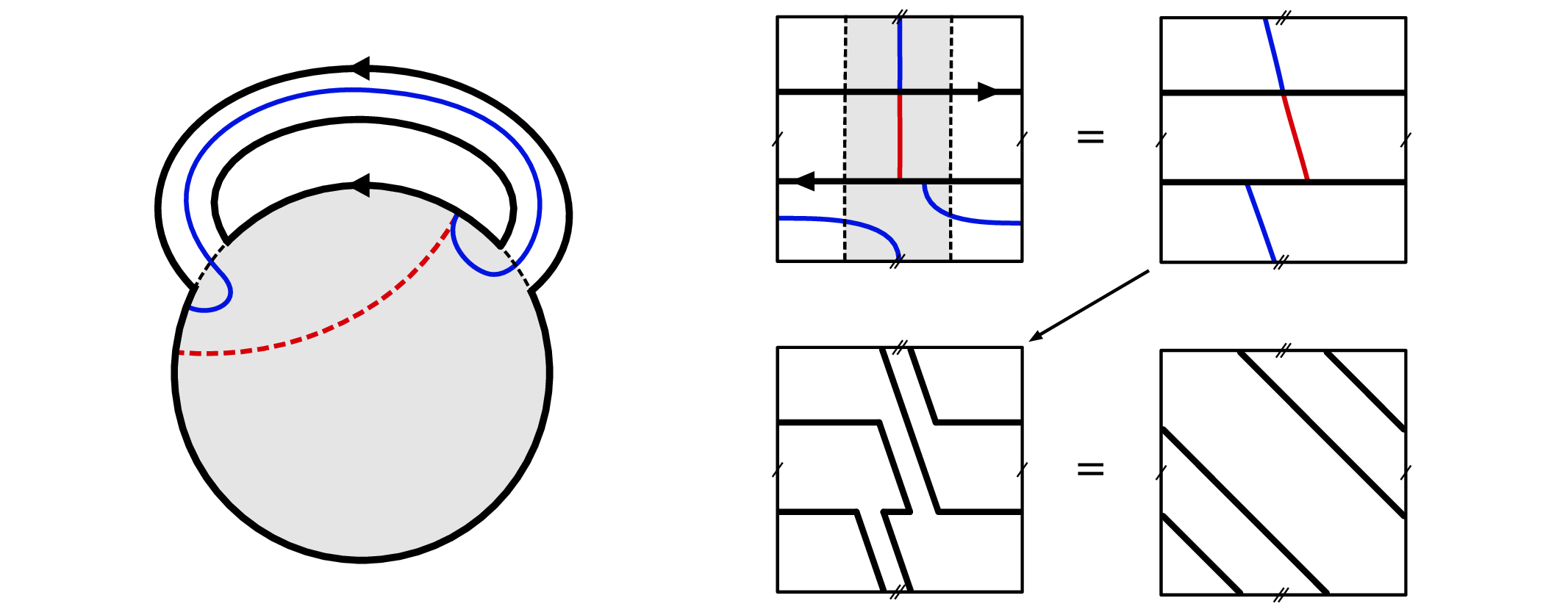}
    \put(22,0){\small $W_0$}
	\end{overpic}
	\caption{A (self) clasp bypass that Dehn twists the dividing curves on a standard convex torus.}
	\label{fig:slope}
\end{figure}

Next, we translate to bypass arcs and dividing curves in the style of \cite{honda2000classification}. First, we draw $\Sigma_0$ as a fundamental square with edges identified and dividing curves of slope $0$; see the top left square in \cref{fig:slope}. To aid in the translation, we have lightly shaded the doubled disk in the disk-band presentation. The bypass arc $D_+ \cup D_-$, after isotopy, is then given as the arc in the top right square. By \cite[Lemma 3.12]{honda2000classification}, the resulting dividing set after attaching the bypass is given by the lower row of squares and now has slope $-1$. 
\end{proof}

\subsection{Destabilizing Legendrian arcs}\label{subsec:destab_arc}

In addition to producing basic slices, the bypass disks of \cite{honda2000classification} allow for the destabilization of Legendrian knots, the boundary of a bypass disk being a union of two Legendrian arcs, one of which is obtained by stabilizing the other.  The result is a tight relationship between bypasses, basic slices, and stabilizations of Legendrian arcs, exploited, for instance, in classifications of Legendrian knots such as \cite{etnyre2001knots,tosun2012legendrian}.  Having interpreted the clasp move as a basic slice above, we show here how the clasp move witnesses the destabilization of a Legendrian arc.

\begin{proof}[Proof of \eqref{part:destab_arc} of \cref{thm:main_dim3}.]

Let $\Lambda_0\subset (Y, \xi)$ be an embedded stabilized Legendrian arc in a contact manifold. Let $W_0$ be its ribbon neighborhood bounded by the double transverse pushoff, so that the contact handlebody $H_0 = W_0 \times [-1,1]$ is the standard $J^1$-neighborhood of $\Lambda_0$. See the left panel of \cref{fig:3ddestab} for $\Lambda_0$ in the front projection, and the upper part of the middle panel for $\Lambda_0$ and $W_0$ in the Lagrangian projection. 

It is well-known that there is a bypass attachment in $(Y, \xi)$ which produces a standard neighborhood of the destabilization of $\Lambda_0$. The embedded bypass half-disk is drawn in the front projection on the left of \cref{fig:3ddestab}. In the Lagrangian projection in the middle panel, we take the green arc as the core of the contact $1$-handle associated to the bypass attachment and draw the core of the contact $2$-handle --- roughly the bypass half-disk --- accordingly.

\begin{figure}[ht]
	\centering
	\begin{overpic}[scale=.41]{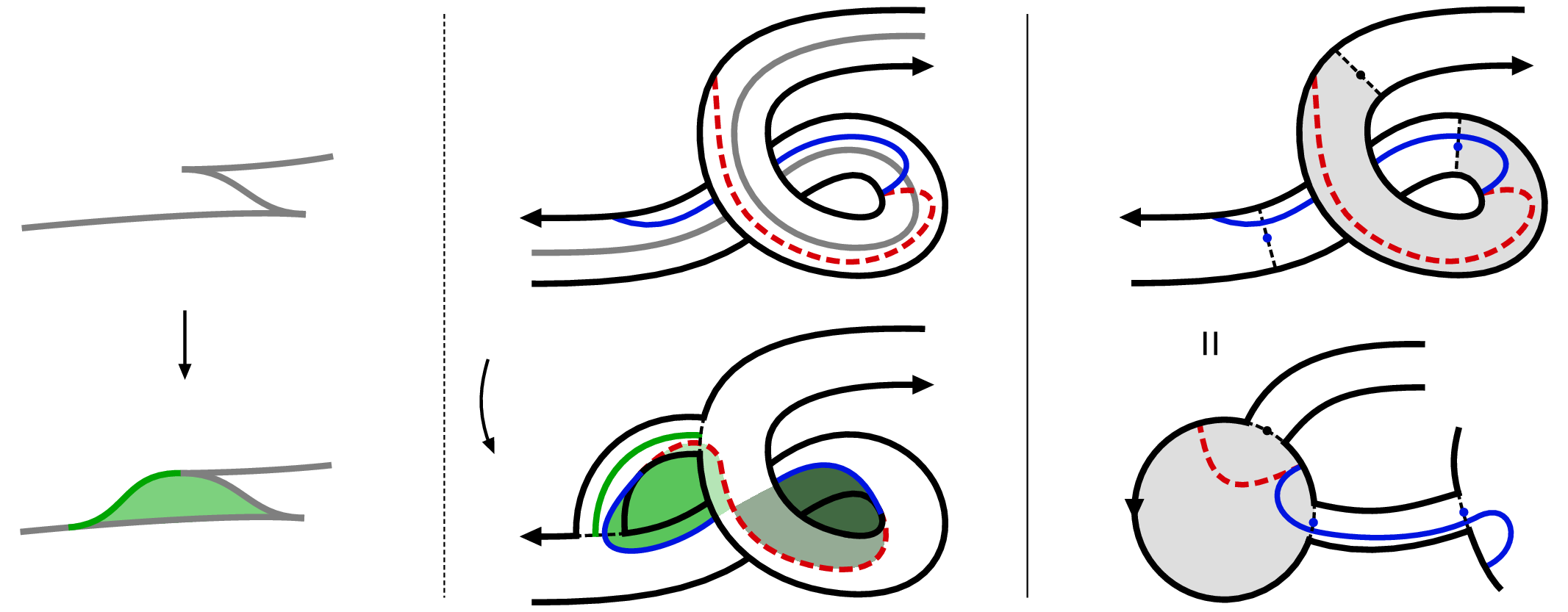}
    \put(10,30){\small \textcolor{gray}{$\Lambda_0$}}
	\end{overpic}
	\caption{Destablization of a neighborhood of a Legendrian arc as performed by a clasp bypass.}
	\label{fig:3ddestab}
\end{figure}

Finally, in the right panel we clean up the Lagrangian projection and identify a specific (local) disk-band presentation of $W_0$; a choice of $0$-handle is once again shaded for clarity. The top part of the right panel remains an embedded representation of the model in the Lagrangian projection, while the bottom part is an abstract version, unwound to more easily identify the nature of the bypass arc. In particular, the destabilizing bypass is a clasp move by \cref{subsec:3dclasp}.  
\end{proof}

\subsection{Legendrian graphs}\label{subsec:legendrian_graphs}

A \textit{Legendrian graph} in a contact $3$-manifold $(Y, \xi)$ is a spatial graph everywhere tangent to $\xi$. Giroux's proof \cite{Giroux2002GeometrieDC} of the correspondence between contact structures and open book decompositions in dimension $3$ used the language of Legendrian graphs; since then, Legendrian graphs have been studied both as interesting objects in their own right \cite{o2012legendrian,lambert2016planar} and because their ribbon neighborhoods are precisely the class of quasipositive surfaces \cite{baader2009graphs,hayden2022ribbons}.

The edges adjacent to a vertex $v$ of a Legendrian graph inherit a counterclockwise cyclic ordering determined by the oriented contact plane at $v$. The \textit{contact framing} of a Legendrian graph $G$, which generalizes the Thurston-Bennequin invariant of a knot, is the smooth isotopy type of the canonical Weinstein ribbon surface of $G$.

The clasping bypass allows us to transpose the cyclic order of two edges adjacent to a vertex in an arbitrarily small neighborhood. Together with arc destabilization, we can arbitrarily modify the contact framing of a Legendrian graph via the clasp move.

\begin{proposition}\label{prop:graph_prop}
Let $G\subset (Y, \xi)$ be a Legendrian graph and let $H$ be its standard neighborhood. Let $v \in G$ be a vertex. There is a Legendrian graph $G'\subset H$ with standard neighborhood $H' \subset H$ such that
\begin{enumerate}
    \item $G$ and $G'$ are isomorphic as abstract graphs,  
    \item $G$ and $G'$ agree outside of an arbitrarily small neighborhood $\mathcal{U}$ of $v$, 
    \item $G$ and $G'$ share $v$ as a vertex and the cyclic order of edges adjacent to $v$ in $G$ and $G'$ differs by an adjacent transposition, and 
    \item $H=H' \cup B$, where $B$ is a(n embedded) clasping bypass attachment. 
\end{enumerate}
Specifically, up to Legendrian graph isotopy and contactomorphism of neighborhoods, $G$ and $G'$ differ near $v$ according to \cref{fig:graph_prop}.
\end{proposition}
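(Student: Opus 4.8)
The plan is to translate the combinatorial transposition of edges into the disk-band clasp move established in \cref{subsec:3dclasp}, and then to realize the resulting bypass in the interior of $H$.

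First I would present the standard neighborhood $H$ of $G$ as a contact handlebody $W \times [-1,1]$ over the Weinstein ribbon surface $W$ of $G$, drawn in disk-band form: each vertex of $G$ corresponds to a $0$-handle (disk) and each edge to a $1$-handle (band), with the two feet of a band recording where the corresponding edge meets the disks at its endpoints. The key point is a dictionary between two cyclic orders at $v$. Since the edges of $G$ are tangent to $\xi$, they leave $v$ in directions lying in the oriented plane $\xi_v$, and the counterclockwise cyclic order these directions inherit from $\xi_v$ is precisely the order in which the associated feet are arranged around the boundary of the disk modeling $v$. Granting this, an adjacent transposition of two edges in the cyclic order at $v$ is exactly the swap of the attaching regions of two adjacent feet on that disk.

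Next, I would let $W'$ be the surface obtained from $W$ by performing this foot swap on the disk at $v$, and let $G'$ be the Legendrian graph with ribbon $W'$. By construction $G'$ is isomorphic to $G$ as an abstract graph, shares $v$, has cyclic order at $v$ differing from $G$ by the chosen transposition, and agrees with $G$ outside an arbitrarily small neighborhood $\mathcal{U}$ of $v$, since the swap is supported near the disk. Applying the existence result for the clasp bypass from \cref{subsec:3dclasp} with base surface $W'$ --- so that swapping its feet returns $W$ --- produces a clasping bypass $B$ with $H' \cup B \cong H$ relative to the boundary, where $H' = W' \times [-1,1]$. Here both cases of \cref{subsec:3dclasp} may occur: if the two half-edges at $v$ belong to distinct edges the feet lie on distinct bands (Case I), while if they belong to a single loop at $v$ the feet lie on the same band (Case II), and in either case $B$ is a genuine clasping bypass.

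Finally I would promote this abstract decomposition to an embedded one inside $H$. Using the rel-boundary contactomorphism $H' \cup B \cong H$, I push the standard copy of $H'$ forward into $H$, obtaining an embedded sub-contact-handlebody $H' \subset H$ for which $H = H' \cup B$ is an interior clasping bypass attachment; its spine is the sought Legendrian graph $G' \subset H \subset Y$. Properties (1) and (3) are then immediate from the construction of $W'$, property (2) holds because $B$ is supported in $\mathcal{U}$ while $H'$ and $H$ coincide elsewhere, and property (4) is the statement just produced; matching the local picture to \cref{fig:graph_prop} is a direct comparison up to Legendrian graph isotopy and contactomorphism of neighborhoods. I expect the main obstacle to be the bookkeeping of the first step --- verifying that an adjacent transposition in the contact-plane cyclic order corresponds precisely to an adjacent foot swap in the disk-band presentation --- whereas the interior realization is routine, following the interior-bypass discussion surrounding \cref{thm:main_moves}.
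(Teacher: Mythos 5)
Your proposal is correct in its essentials, but it runs in the opposite direction from the paper's proof, and the difference is worth spelling out. The paper constructs $G'$ \emph{explicitly inside} $H$: it first normalizes $G$ near $v$ using Legendrian graph Reidemeister moves so that the front projection matches the left of \cref{fig:graph_prop}, defines $G'$ by the local front modification on the right, passes to the Lagrangian projection to exhibit the ribbon of $G'$ in disk-band form and identify the bypass arc as clasp data, and then does the one step your argument never needs: it verifies that the bypass attachment actually exists \emph{embedded} in the ambient manifold, by checking that the relevant knot (after sliding across the bypass $1$-handle) is a max-tb unknot bounding a disk in the complement of $N(G')$. Your proof instead takes the abstract clasp move of \cref{thm:main_dim3} as a black box and transports the abstract rel-boundary decomposition $H'\cup B\cong H$ into the embedded $H$ --- the ``interior bypass'' trick from the introduction --- so that embeddedness of $B$ is automatic and you read $G'$ off as the image of the abstract spine. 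What your route buys is precisely skipping the ambient-existence verification; what the paper's route buys is direct control of the local picture of $G'$ near $v$, which is part of the statement (the ``Specifically'' clause) and which in your approach must be recovered by unwinding the transport. The one point you should make explicit, since it carries the weight of properties (2), (3), and the matching with \cref{fig:graph_prop}: the statement of \cref{thm:main_dim3} alone only asserts a contactomorphism rel boundary, and your argument needs the stronger (true, but construction-level) fact from \cref{subsec:3dclasp} that this contactomorphism can be taken to be the identity outside the local chart and to carry spine to spine there; otherwise the transported $G'$ need not agree with $G$ away from $v$, nor share the vertex $v$. With that locality made explicit (and a small isotopy to pin the transported vertex at $v$), your proof goes through.
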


\begin{figure}[ht]
	\centering
	\begin{overpic}[scale=.35]{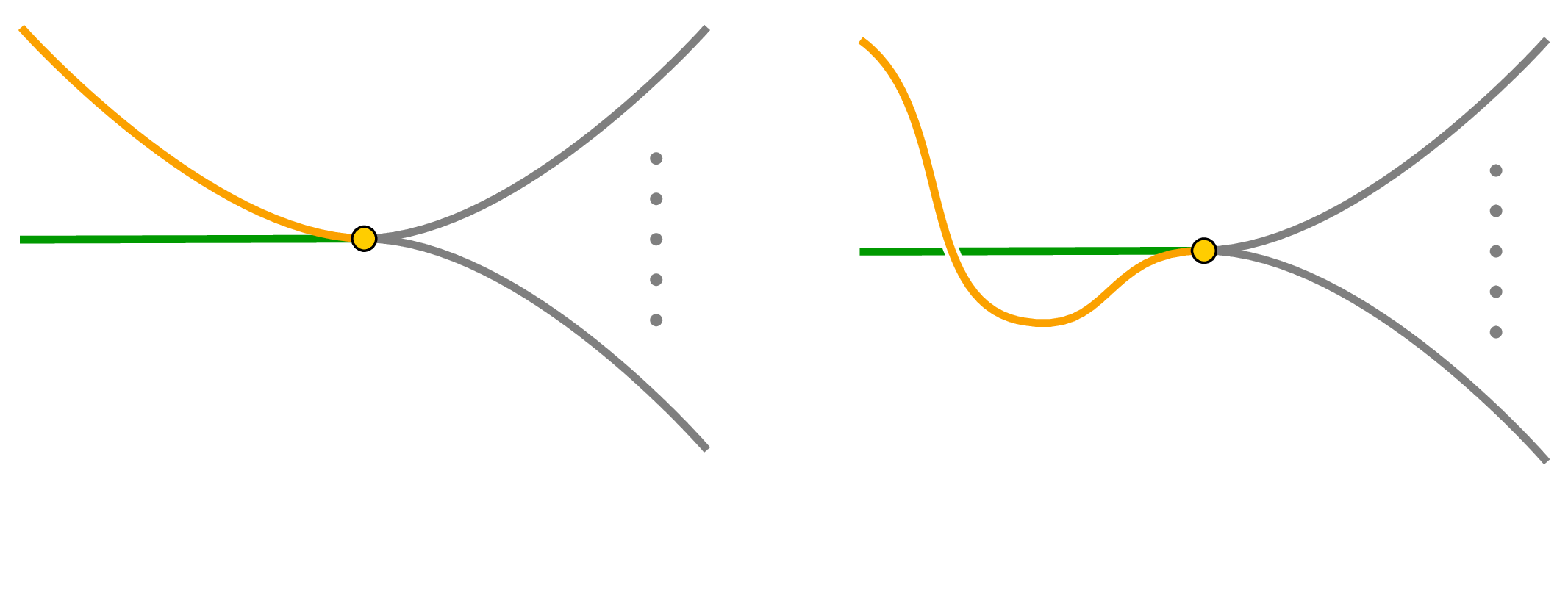}
    \put(22,35){$G$}
    \put(76,35){$G'$}

    \put(-1,37){\small \textcolor{orange}{$2$}}
    \put(52.5,36.5){\small \textcolor{orange}{$1$}}
    \put(-1,23){\small \textcolor{darkgreen}{$1$}}
    \put(52.5,22.5){\small \textcolor{darkgreen}{$2$}}
    \put(46.25,9.5){\small \textcolor{gray}{$3$}}
    \put(46.25,37){\small \textcolor{gray}{$n$}}
    \put(99.75,8.75){\small \textcolor{gray}{$3$}}
    \put(99.75,36.25){\small \textcolor{gray}{$n$}}
	\end{overpic}
    \vskip-1cm
	\caption{Changing the cyclic order of edges in a Legendrian graph via a bypass move in the front projection.}
	\label{fig:graph_prop}
\end{figure}

Repeated application of \cref{prop:graph_prop} then gives:  

\begin{corollary}
Let $G\subset (Y, \xi)$ be a Legendrian graph. Let $W'$ be an orientable surface homotopy equivalent to $G$. There is a Legendrian graph $G'\subset Y$ such that
\begin{enumerate}
    \item $G'$ has the same vertices as $G$, 
    \item $G'$ agrees with $G$ outside of an arbitrarily small neighborhood of the vertex set, 
    \item $G'$ has contact framing given by $W'$, and 
    \item a standard neighborhood of $G$ is obtained by attaching a sequence of clasping bypasses to a standard neighborhood of $G'$.
\end{enumerate}
\end{corollary}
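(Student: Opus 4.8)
The plan is to reduce the statement to the classification of orientable surfaces with boundary and then to realize the required changes to the ribbon one vertex at a time. Since $G'$ is to have the same vertices as $G$ and to agree with $G$ outside an arbitrarily small neighborhood $\mathcal{U}$ of the vertex set, it is abstractly isomorphic to $G$; in particular its Weinstein ribbon satisfies $\chi(\mathrm{ribbon}(G'))=\chi(G')=\chi(G)=\chi(W')$. Because an orientable surface with nonempty boundary is determined up to diffeomorphism by its Euler characteristic together with its number of boundary components $b$ (equivalently its genus $g$, via $\chi=2-2g-b$), and because a contact framing is automatically orientable with nonempty boundary, it suffices—after treating each connected component of $G$ separately—to produce by clasping bypasses supported in $\mathcal{U}$ a Legendrian graph $G'$ whose contact framing has the same number of boundary components as $W'$.

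Next I would track how the available moves act on this boundary count. Both moves are clasping bypasses: the transposition of adjacent edges at a vertex from \cref{prop:graph_prop}, and the (de)stabilization of a Legendrian arc from \eqref{part:destab_arc} of \cref{thm:main_dim3}. The first alters the fat-graph (cyclic ordering) structure at a vertex, and the boundary-tracing procedure shows that a single adjacent transposition either fixes $b$ or changes it by $\pm 2$ (the parity of $b$ being pinned by $\chi$); the second adjusts the contact framing of an edge, which I use to keep each intermediate ribbon of the intended topological type. The essential geometric input is that a Legendrian ribbon lives in $Y$ and is \emph{not} merely the combinatorial ribbon of an abstract fat graph: by clasping two sheets that approach a common vertex one introduces a genus-raising clasp (decreasing $b$ by $2$), while the inverse unclasping bypass removes it. Thus the clasping moves are unobstructed by any maximum-genus bound of the abstract graph, and I expect every orientable surface with $\chi=\chi(G)$ and nonempty boundary—hence every $W'$ homotopy equivalent to $G$—to be realizable as the contact framing of such a $G'$.

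Concretely, I would induct on $|g(W')-g(\mathrm{ribbon}(G))|$: when the genus must be raised I clasp a suitable pair of sheets at a vertex, and when it must be lowered I attach the inverse unclasping bypass, using transpositions and arc (de)stabilizations along the way to bring the relevant sheets into the standard local model of \cref{fig:graph_prop} and to correct edge framings. Each step is a clasping bypass supported in $\mathcal{U}$, so conditions (1) and (2) hold, the resulting contact framing is diffeomorphic to $W'$ by the surface classification, and—reading the sequence in the direction that rebuilds $G$ from $G'$, and canceling the auxiliary handle pairs via \cref{lemma:trivial_bypass_lemma} exactly as in \cref{prop:graph_prop}—one recovers $N(G)$ as $N(G')$ with a sequence of clasping bypasses attached. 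The main obstacle is precisely the bookkeeping of the previous paragraph: verifying that the local clasping and framing moves change the boundary-component count in the controlled manner claimed, that the genus-raising clasp can always be staged within a neighborhood of the vertex set (routing sheets of distinct edges together when necessary), and that one can steer $b$ to the target value while keeping every intermediate ribbon orientable. Once this combinatorial transitivity is in hand, the conclusion follows immediately from the classification of surfaces.
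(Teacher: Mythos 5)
Your approach diverges from the paper's, and it breaks at exactly the point you flagged as the main obstacle. The paper's proof of this corollary is a single line: repeated application of \cref{prop:graph_prop}. Since adjacent transpositions generate arbitrary permutations, iterating that proposition realizes any prescribed cyclic ordering of the edges at each vertex of $G$ --- that is, any ribbon (fat-graph) thickening of the abstract graph $G$ --- and each step is a clasping bypass supported in an arbitrarily small neighborhood of the vertex set, so conditions (1), (2), and (4) are inherited directly from \cref{prop:graph_prop}. No surface classification, boundary-count bookkeeping, or genus induction is needed for what the paper actually uses.

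The genuine gap in your argument is the ``essential geometric input'': the claim that, because the ribbon lives in $Y$, clasping is unobstructed by the maximum-genus bound of the abstract fat graph. This is false. The abstract diffeomorphism type of the contact framing of a Legendrian graph is determined by the fat-graph structure that the contact planes induce at its vertices: full twists along edges do not change the abstract surface, half twists cannot occur because $\xi$ is cooriented, and the cyclic order of edges around a vertex is read off from the tangent directions in the contact plane. Moreover, \cref{prop:graph_prop} itself tells you what a clasping bypass near a vertex does: it produces a Legendrian graph \emph{isomorphic to $G$ as an abstract graph} whose cyclic orderings differ by a single adjacent transposition. Hence every surface reachable by your moves is a fat-graph thickening of the same abstract graph $G$, and its genus is confined to the interval $[\gamma_{\min}(G),\gamma_{\max}(G)]$ of cellular embedding genera of $G$, where $\gamma_{\max}(G)$ is given by Xuong's formula and can be strictly smaller than $\lfloor \beta_1(G)/2\rfloor$. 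Concretely, take $G$ to be a dumbbell: Legendrian loops at two distinct vertices joined by a Legendrian arc. Every thickening of $G$ is two annuli joined by a band, i.e.\ a pair of pants, for any choice of cyclic orderings; no sequence of clasps at the two vertices produces a once-punctured torus, even though the once-punctured torus is homotopy equivalent to $G$. So the transitivity your induction requires --- that every orientable surface with $\chi=\chi(G)$ and nonempty boundary is realizable --- genuinely fails, and no staging of clasps, transpositions, or arc (de)stabilizations (which change only the embedding, not the abstract ribbon type) can repair it. This example also shows that the corollary must be read with $W'$ ranging over surfaces admitting $G$ as a ribbon spine, i.e.\ thickenings of $G$, rather than arbitrary surfaces homotopy equivalent to $G$; that is precisely what repeated application of \cref{prop:graph_prop} delivers, and it is the form in which the result is used elsewhere (e.g.\ in \cref{cor:heegaard}, via \cref{thm:main_dim3}, the relevant graph is a one-vertex bouquet in disk-band form, for which the two readings coincide).
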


\begin{proof}[Proof of \cref{prop:graph_prop}.]
Up to contactomorphism and repeated applications of Legendrian graph Reidemeister move VI in \cite{o2012legendrian}, we may assume that in a sufficiently small neighborhood of $v$, $G$ has a front projection given by the left side of \cref{fig:graph_prop}. Then we define $G'$ as on the right. Clearly $G$ and $G'$ are isomorphic as abstract graphs. By construction they share $v$ as a vertex, agree outside of the model neighborhood, and the cyclic order of edges labeled $1$ and $2$ has been transposed. 

Thus, to prove the proposition it suffices to identify a bypass taking a neighborhood of $G'$ to a neighborhood of $G$, and to show that the bypass corresponds to a clasp move. At this point the proof is very similar to that of arc destabilization. First, we draw $G'$ in a a neighborhood of $v$ in the Lagrangian projection, together with its standard ribbon neighborhood; this is given by the far left panel of \cref{fig:graph_proof}.

\begin{figure}[ht]
	\centering
	\begin{overpic}[scale=.41]{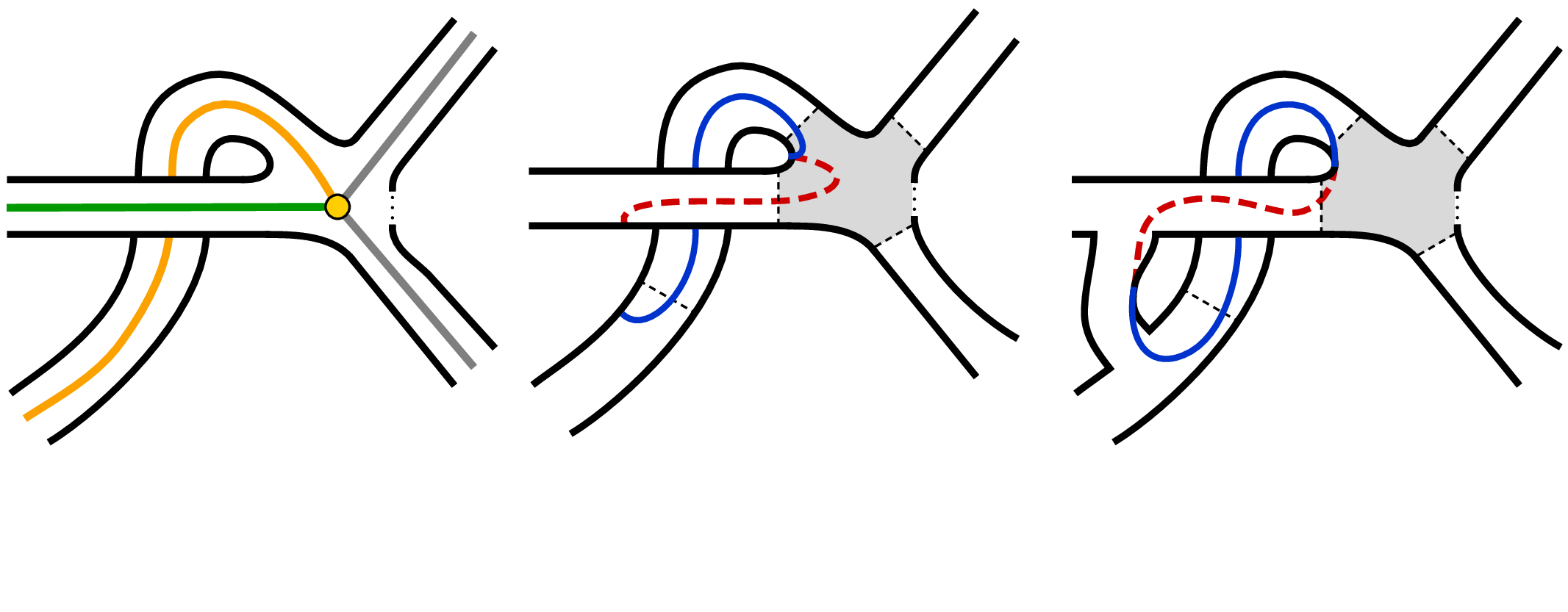}
    \put(20,35){$G'$}

    \put(-0.25,10.75){\small \textcolor{orange}{$1$}}
    \put(-1.5,25){\small \textcolor{darkgreen}{$2$}}
    \put(30.75,13.75){\small \textcolor{gray}{$3$}}
    \put(30.75,37.75){\small \textcolor{gray}{$n$}}
    
	\end{overpic}
    \vskip-1.5cm
	\caption{Identifying the clasping bypass near a vertex of a Legendrian graph in the Lagrangian projection.}
	\label{fig:graph_proof}
\end{figure}

We can take a neighborhood of $v$ in the Weinstein ribbon to be a $0$-handle, and neighborhoods of each edge to be bands attached to the $0$-handle. As in the proof of arc destabilization, we have shaded the $0$-handle in the middle panel for clarity. Then we identify the bypass arc in the middle figure, and observe that (up to an isotopy of the dashed red arc) the data corresponds to a clasp move of the bands numbered $1$ and $2$. In the right panel, we have attached the $1$-handle associated to the bypass data and observe that, after sliding the bypass arc across the $1$-handle and gluing the ends, the resulting knot is a max-tb unknot and bounds a disk in the complement of the neighborhood of $G'$. Hence, the bypass attachment exists in the ambient contact manifold, and this completes the proof. 
\end{proof}

\subsection{Convex Heegaard splittings}\label{subsec:convex_heegaard}

A \textit{convex Heegaard splitting} of a closed contact manifold $(Y, \xi)$ is simply a smooth Heegaard splitting where the Heegaard surface is convex. (A \textit{contact Heegaard splitting} is a stronger notion where each handlebody is a contact handlebody, and is equivalent to a supporting open book decomposition.) Here we prove: 

\heegaard*

\begin{proof}
Let $H$ be one of the smooth handlebodies in the associated Heegaard splitting. Let $\Lambda_0\subset H$ be an embedded Legendrian graph such that $H$ deformation retracts onto $\Lambda_0$. One can identify such a graph by, for instance, picking a gradient-like vector field for a Morse function $\phi: Y \to \R$ such that $H = \phi^{-1}(-\infty, 0]$ and performing Legendrian approximation on the union of stable manifolds of critical points of index $1$ and $2$. Let $W_0$ be the standard ribbon neighborhood of $\Lambda_0$ and $H_0 = W_0 \times [-1,1]$ the standard contact handlebody neighborhood. Note that, after rounding corners, $\Sigma_0 := \partial H_0$ is a convex Heegaard surface smoothly isotopic to $\Sigma$. 

Let $W'$ be the orientable (abstract) surface of genus $0$ with the same Euler characteristic as $W_0$ and let $H' = W' \times [-1,1]$ be the (abstract) contact handlebody over $W'$. By \cref{thm:main_dim3}, there is a sequence of clasp move bypass attachments $B_1, \dots, B_n$ such that $H' \cup B_1 \cup \cdots \cup B_n$ is contactomorphic rel boundary to $H_0$. This contactomorphism gives a contact embedding $H' \hookrightarrow H_0 \subset Y$ such that $H_0 \setminus H' \cong \Sigma' \times [0,1]$. In particular, $\Sigma' = \partial H'$ is a convex Heegaard surface smoothly isotopic to $\Sigma$ such that $R_+(\Sigma')$ and $R_-(\Sigma')$ are diffeomorphic to $W'$. This gives \eqref{part:heegaard_planar}.

If $g(\Sigma) \neq 1$, then we may repeat the same argument above but instead beginning with a(n abstract) surface $W''$ with connected boundary and Euler characteristic that coincides with $W_0$. This gives \eqref{part:heegaard_connected}.
\end{proof}

\section{Appearance of bypass moves in higher dimensions}\label{sec:appearance_high_dim}
In this section, we consider two manifestations of the bypass moves of \cref{thm:main_moves} in higher dimensions.  First, we follow Ding-Geiges-van Koert \cite{Ding2012Diagrams} in considering the moves not as bypasses, but as modifications which can be made to the page of an abstract open book, and study the effect of these modifications on the resulting contact manifold.  Next, we use this understanding of the bypass moves as Weinstein modifications to construct convex hypersurfaces in contact manifolds with interesting positive and negative regions.

\subsection{Subcritically fillable contact structures}\label{subsec:subcritically_fillable}
Here, we adopt the perspective of Ding-Geiges-van Koert \cite{Ding2012Diagrams} and prove:

\movesForObds*

As explained in the introduction, \cref{thm:moves_for_obds} admits a proof which omits any mention of bypasses.  Indeed, consider the following $h$-principle of Cieliebak-Eliashberg:

\begin{theorem}[{\cite[Theorem 14.3]{cieliebak2012stein}}]\label{thm:ce-h-principle}
Let $(W_0,\lambda_0, \phi_0)$ and $(W_1,\lambda_1, \phi_1)$ be subcritical Weinstein domains of dimension at least 6.  If there is a diffeomorphism $f\colon W_0\to W_1$ such that $f^*d\lambda_1$ is homotopic to $d\lambda_0$ as a nondegenerate 2-form, then there is a diffeotopy $f_t\colon W_0\to W_1$, $t\in[0,1]$, such that $f_0=f$ and $f_1$ is an exact symplectic deformation equivalence.
\end{theorem}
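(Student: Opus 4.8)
The plan is to prove this $h$-principle by reducing it to the flexibility of \emph{subcritical isotropic embeddings} and then inducting over a Weinstein handle decomposition. The key structural input is that a subcritical Weinstein domain of dimension $2n$ deformation retracts onto an isotropic skeleton built from Weinstein handles of index at most $n-1$; in particular, the attaching sphere of each handle is an isotropic sphere of dimension at most $n-2$ in a contact manifold of dimension $2n-1$. Since $n-2 < n-1$, these spheres lie strictly below the Legendrian (critical) dimension, so Gromov's $h$-principle for isotropic embeddings applies in its parametric form: the inclusion of the space of genuine isotropic embeddings into the space of formal ones is a weak homotopy equivalence. The hypothesis $2n\geq 6$, i.e.\ $n\geq 3$, is what supplies the room needed for this flexibility.

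First I would fix Weinstein handle decompositions of $(W_0,\lambda_0,\phi_0)$ and $(W_1,\lambda_1,\phi_1)$ and use the diffeomorphism $f$ together with the homotopy $f^*d\lambda_1\simeq d\lambda_0$ of nondegenerate $2$-forms to produce a \emph{formal} deformation of Weinstein structures: a path of compatible almost-complex structures and Lyapunov functions interpolating the two, together with formal isotropic data for the attaching spheres. The goal is then to promote this formal deformation to a genuine Weinstein homotopy from $(W_0,\lambda_0)$ to $f^*(W_1,\lambda_1)$; an exact symplectomorphism diffeotopic to $f$ is read off as the time-$1$ map of such a homotopy.

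Next I would carry out the promotion inductively, one handle at a time, ordered by index. At each step the formal data specifies a formal isotropic isotopy of the next attaching sphere inside the contact boundary of the partially built domain; because the sphere is subcritically isotropic, the parametric isotropic $h$-principle upgrades this to a genuine isotropic isotopy, and Weinstein handle attachment along isotopic attaching spheres yields deformation-equivalent structures. Reconciling the two a priori different handle decompositions of the diffeomorphic manifolds requires interpolating by births, deaths, and handle slides; these moves are again governed by the flexibility of subcritical isotropics, so the entire deformation stays within the subcritical world.

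The main obstacle I expect is the \emph{parametric} and \emph{relative} bookkeeping: it does not suffice to move each attaching sphere into place in isolation, since earlier handles deform the ambient contact structure in which the later attaching spheres live, and one must run the isotropic $h$-principle in families and relative to the already-fixed part of the skeleton. Formulating the inductive hypothesis precisely — so that at each stage one simultaneously controls the contact structure on the evolving boundary and the formal isotropic data — is where the real work lies. Once this bookkeeping is correctly set up, each individual application of the subcritical isotropic $h$-principle is routine, and concatenating the resulting handle-by-handle homotopies produces the desired diffeotopy $f_t$ with $f_1$ an exact symplectomorphism.
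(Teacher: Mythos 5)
This statement is imported verbatim from Cieliebak--Eliashberg \cite{cieliebak2012stein} (their Theorem~14.3); the paper gives no proof of it, so the only meaningful benchmark is the proof in that book. Measured against it, your outline reconstructs the genuine strategy: subcritical Weinstein handles have index at most $n-1$, so their attaching spheres are isotropic of dimension at most $n-2$, strictly below the Legendrian dimension, and the parametric/relative $h$-principle for subcritical isotropic embeddings (Gromov) is precisely the engine that Cieliebak--Eliashberg run inductively over a handle decomposition, reconciling the two decompositions by births, deaths, and handle slides, and then converting the resulting Weinstein homotopy into a diffeotopy ending at an exact symplectomorphism by a Moser-type argument. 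You also correctly identify where the substance lies: the parametric and relative bookkeeping you flag is not a routine afterthought but occupies several chapters of \cite{cieliebak2012stein} (holonomic approximation near the skeleton, the two-index theorem, and the Morse-theoretic lemmas that let one reorder and trade handles while keeping all indices subcritical).

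Two caveats, neither fatal to the outline. First, your attribution of the hypothesis $2n\geq 6$ is slightly off: the subcritical isotropic $h$-principle is just as available when $2n=4$ (there the attaching spheres of index $\leq 1$ handles are $S^0$'s), and the dimension restriction instead enters through the smooth and Morse-theoretic side --- the handle manipulations (Whitney trick, handle trading, the two-index theorem) require the cobordism techniques of high-dimensional topology. Second, your closing step, reading off ``an exact symplectomorphism diffeotopic to $f$'' as the time-$1$ map of the Weinstein homotopy, silently invokes a nontrivial proposition (homotopy of Weinstein structures implies isotopy, proved by a Moser-type argument with care at the boundary/completion); as stated it is a gloss, though a correct one. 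As a blind reconstruction this is an accurate and honest outline of the cited proof rather than an independent argument, and it would become a proof exactly by filling in the machinery you have correctly named.
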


Note that none of the local moves described in \cref{thm:main_moves} modify the rotation class of the Legendrian to which they are applied.  Moreover, if $\Lambda\subset\partial W$ is a Legendrian in the boundary of a Weinstein domain, then the rotation class of $\Lambda$ determines the formal isotropic isotopy class of the corresponding isotropic $\Lambda\times\{0\}\subset\partial(W\times D^2)$ in the stabilized Weinstein domain.  Now if $(W',\lambda', \phi')$ is obtained from $(W,\lambda, \phi)$ by any of the moves of \cref{thm:main_moves}, then $(W\times D^2,\lambda+\lambda_{\mathrm{std}}, \phi+ \phi_{\mathrm{std}})$ and $(W'\times D^2,\lambda'+\lambda_{\mathrm{std}}, \phi'+ \phi_{\mathrm{std}})$ admit Weinstein handle decompositions whose attaching spheres are subcritical and pairwise formally isotropically isotopic.  By the $h$-principle for subcritical isotropics, these attaching spheres are genuinely isotropically isotopic, and thus the stabilized domains satisfy the hypotheses of \cref{thm:ce-h-principle}.  We conclude that $(W\times D^2,\lambda+\lambda_{\mathrm{std}}, \phi+ \phi_{\mathrm{std}})$ and $(W'\times D^2,\lambda'+\lambda_{\mathrm{std}}, \phi'+ \phi_{\mathrm{std}})$ have contactomorphic boundaries.  Nevertheless, we present an alternate proof that carefully examines the bypasses corresponding to the moves of \cref{thm:main_moves}, with an eye towards using the moves in abstract open books with nontrivial monodromy.

\begin{proof}[Proof of \cref{thm:moves_for_obds}]
Suppose that $(W',\lambda', \phi')$ is obtained from $(W,\lambda, \phi)$ by applying the vertical clasp move on two distinct attaching spheres $\Lambda_0$ and $\Lambda_1$; the case of a self-clasp is nearly identical. Let $(\Lambda_{\pm};D_{\pm})$ be the bypass attachment data identified in the proof of \cref{prop:clasping_bypass};  see \cref{fig:5proof}.  We will show that $(W'',\tau_L)$ is a common stabilization of the abstract open books $(W,\mathrm{id})$ and $(W',\mathrm{id})$, where $(W'',\lambda'',\phi'')$ is obtained from $(W,\lambda,\phi)$ by attaching a Weinstein $n$-handle to $W$ along the Legendrian sphere $\Lambda_- \uplus \Lambda_+$ and $L\subset W''$ is a particular Lagrangian sphere that we construct explicitly.

\begin{figure}[ht]
	\centering
	\begin{overpic}[scale=.35]{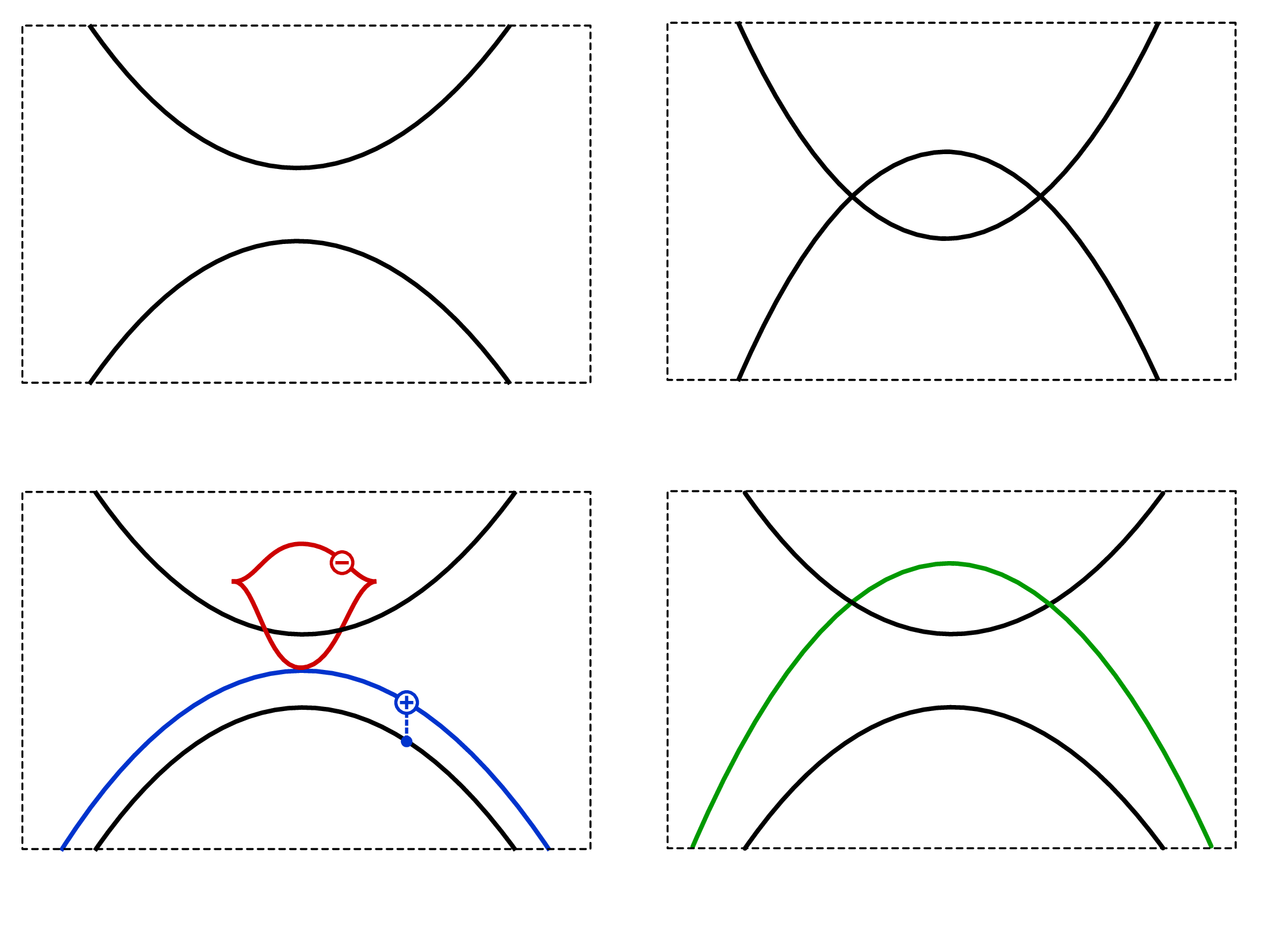}
	\put(16.5,32){\color{darkred}\small $\Lambda_-$}
	\put(53.5,26){\small \textcolor{darkgreen}{$\Lambda_-\uplus \Lambda_+$}}
	\put(9.5,20){\color{darkblue}\small $\Lambda_+$}
	
	\put(92.5,18){\tiny $(-1)$}
 \put(92.5,26){\tiny $(-1)$}
	\put(92.5,22){\tiny \textcolor{darkgreen}{$(-1)$}}
	
	\put(41.5,54){\tiny $(-1)$}
 \put(41.5,62){\tiny $(-1)$}
 \put(41.5,18){\tiny $(-1)$}
 \put(41.5,26){\tiny $(-1)$}
 \put(92.5,54){\tiny $(-1)$}
 \put(92.5,62){\tiny $(-1)$}

    \put(22.5,42){\small $W$}
    \put(22.5,5.25){\small $W$}
    \put(73.5,5.25){\small $W''$}
    \put(73.75,42){\small $W'$}

    \put(3,46){\small $\Lambda_0$}
    \put(3,70){\small $\Lambda_1$}

    \put(53.5,46.5){\small $\Lambda_0'$}
    \put(53.5,70){\small $\Lambda_1'$}
    \put(62,9.25){\small $\Lambda_0$}
    \put(62,33.5){\small $\Lambda_1$}
	
	\end{overpic}
	\vskip-.3in
	\caption{The proof of \cref{thm:moves_for_obds}.}
	\label{fig:5proof}
\end{figure}

Our construction of $L$ begins with the \emph{Legendrian boundary sum} $D_-\uplus_b D_+ \subset M$, where $(M\simeq W\times[-1,1],\xi=\ker(dz+\lambda))$ is a contact handlebody over $W$.  To obtain $D_-\uplus_b D_+$ we first note that, following a slight perturbation into the interior of $M$, $D_-$ and $D_+$ are Legendrian disks satisfying  the following conditions:
\begin{enumerate}[label=(\Alph*)]
    \item the interiors of $D_-$ and $D_+$ are disjoint;\label{cond:leg-bdry-sum-disjoint-interior}
    \item the boundaries $\partial D_-=\Lambda_-$ and $\partial D_+=\Lambda_+$ are Legendrian submanifolds of $(\Gamma,\xi|_\Gamma)$ which intersect $\xi|_{\Gamma}$-transversely at a single point $p$;\label{cond:leg-bdry-sum-transverse}
    \item the dividing set $\Gamma$ of $\partial M$ admits a tubular neighborhood $N(\Gamma)\subset (M,\xi)$ contactomorphic to $(\partial W\times D,\ker(\lambda|_{\partial W} + r^2\,d\theta))$, where $D=\{(r,\theta)\,|\, 0\leq r\leq \delta, 0\leq \theta\leq \pi\}$ is a closed half-disk of radius $\delta>0$ with polar coordinates $(r,\theta)$, such that
    \begin{itemize}
        \item $D_- \cap N(\Gamma) = \Lambda_- \times \{\theta=\theta_-\}$;
        \item $D_+ \cap N(\Gamma) = \Lambda_+ \times \{\theta=\theta_+\}$;
    \end{itemize}
    for some $0 < \theta_- < \theta_+ < \pi$.\label{cond:leg-bdry-sum-cylindrical}
\end{enumerate}
We observe that these conditions are satisfied by any bypass attachment data $(\Lambda_\pm;D_\pm)$.  Together, Conditions~\ref{cond:leg-bdry-sum-disjoint-interior}-\ref{cond:leg-bdry-sum-cylindrical} ensure that the Legendrian boundary sum $D_-\uplus_b D_+$, first constructed in~\cite[Section 4.3]{HH18}, is well-defined.  The result is a Legendrian disk in $M$ with boundary $\partial(D_-\uplus_b D_+) = \Lambda_- \uplus \Lambda_+ \subset \Gamma$ which is smoothly equivalent to the boundary connected sum of $D_-$ and $D_+$ at the point $p$.  We now recall the definition given in~\cite[Section 4.3]{HH18}.

Away from a neighborhood of $p$, $D_-\uplus_b D_+$ agrees with the union $D_-\cup D_+$.  In a neighborhood of $p$ we consider a chart $U=\mathbb{R}_z\times Q_{x_1,y_1}\times\mathbb{R}^{2n-2}_{x,y}$, where
\[
Q = \{(x_1,y_1)\,|\, x_1\geq 0 \text{~or~} y_1\geq 0\}
\]
and $x=(x_2,\ldots,x_n)$ and $y=(y_2,\ldots,y_n)$ are coordinates on $\mathbb{R}^{2n-2}$.  We choose these coordinates so that 
\[
\lambda|_U = dz + \frac{1}{2}\sum_{i=2}^n x_i \, dy_i - y_i\, dx_i
\quad\text{and}\quad
r^2\,d\theta|_U = \frac{1}{2}(x_1 \, dy_1 - y_1\, dx_1),
\]
where $(r,\theta)$ are the polar coordinates on $N(\Gamma)$ identified above.  Moreover, we have
\[
\Pi(D_- \cap U) = \{x_1\geq 0, y_1=0\} \times \{y=0\}
\quad\text{and}\quad
\Pi(D_+ \cap U) = \{x_1=0, y_1\geq 0\} \times \{x=0\},
\]
where $\Pi\colon U \to Q\times\mathbb{R}^{2n-2}$ is the Lagrangian projection.  Then $(D_-\uplus_b D_+)\cap U$ is defined to be the Legendrian submanifold
\[
(D_-\uplus_b D_+)\cap U := \left\{ z=t, x_i=e^t\,a_i, y_i=e^{-t}\,a_i \,\Big|\, \sum_{i=1}^n a_i^2 = 1, a_1\geq 0, t\in\mathbb{R} \right\}.
\]
We leave the global smoothing of $D_-\uplus_b D_+$ as $t\to\pm\infty$ to the reader. 

Consider the image $D=\pi_W(D_-\uplus_b D_+)$, where $\pi_W\colon M\to W$ is the natural projection map.  This image is Lagrangian, but \emph{a priori} singular; we now verify that $D$ is in fact smoothly embedded in $W$.  Where $D_-\uplus_b D_+$ agrees with the union $D_-\cup D_+$, our choice of $(\Lambda_{\pm};D_{\pm})$ ensures that $\pi_W(D_-)$ and $\pi_W(D_+)$ have disjoint interiors.  Indeed, $\pi_W(D_+)$ is a small forward-time Reeb isotopy of the co-core of an $n$-handle in $(W,\lambda,\phi)$, while $\pi_W(D_-)$ does not cross over this handle.  It follows that $D$ is nonsingular in the region where $D_-\uplus_b D_+$ agrees with $D_-\cup D_+$.  In the neighborhood $U$ of $p$ described above, the map $\pi_W$ agrees with the natural projection map
\[
\pi_W\colon \mathbb{R}_z\times Q_{r,\theta}\times\mathbb{R}^{2n-2}_{x,y} \to \mathbb{R}_z\times [0,\infty)_r \times\mathbb{R}^{2n-2}_{x,y},
\]
where $(r,\theta)$ are again the polar coordinates used above.  So $D$ is described in this neighborhood by
\begin{multline*}
\pi_W((D_-\uplus_b D_+)\cap U) =\\ \left\{ z=t, r=a_1\sqrt{e^{2t}+e^{-2t}}, x_i=e^t\,a_i, y_i=e^{-t}\,a_i \,\Big|\, \sum_{i=1}^n a_i^2 = 1, a_1\geq 0, t\in\mathbb{R} \right\},    
\end{multline*}
thus is also nonsingular.  Finally, because $D_-$ sits above $D_+$ near $p$ with respect to the Reeb flow of $\Gamma$, the smoothing of $D_-\uplus_b D_+$ introduces no self-intersections in $D$.  Specifically, the definition of $D_-\uplus_b D_+$ forces the portion of $D_-\uplus_b D_+$ corresponding to $D_-$ to approach $p$ from above while the portion corresponding to $D_+$ approaches from below.  Because this matches the configuration of $D_-$ and $D_+$, the smoothing may be performed to ensure that the $z$-coordinate of $D_-\uplus_b D_+$ is positive in its portion corresponding to $D_-$ and negative in its portion corresponding to $D_+$.  Note that this is not the case for arbitrary bypass attachment data $(\Lambda_\pm;D_\pm)$.  Altogether, $D$ is a smooth Lagrangian filling of $\Lambda_-\uplus\Lambda_+$ in $W$.

Finally, since $(W'',\lambda'',\phi'')$ is obtained from $(W,\lambda,\phi)$ by attaching a Weinstein $n$-handle to $W$ along $\Lambda_- \uplus \Lambda_+$, we let $L\subset W''$ be the Lagrangian sphere which results from attaching the core of this $n$-handle to $D$.  By construction, the open book $(W'',\tau_L)$ is a positive stabilization of $(W,\mathrm{id})$. At the same time, $(W'',\lambda'',\phi'')$ includes a Weinstein $n$-handle attached along $\Lambda_0$ whose removal yields $(W',\lambda',\phi')$ and $L$ intersects the co-core of this handle exactly once, making $(W'',\tau_L)$ a positive stabilization of $(W',\mathrm{id})$; see \cref{fig:5proof}. (This latter positive stabilization corresponds to the trivial bypass observed in \cref{fig:clasp_move}.)  It follows that each of $\mathcal{OB}(W,\mathrm{id})$ and $\mathcal{OB}(W',\mathrm{id})$ is contactomorphic to $\mathcal{OB}(W'',\mathrm{id})$.

As explained in \cref{sec:bypass_moves}, the moves of horizontal (un)\nobreak\hspace{0pt}clasping, (de)\nobreak\hspace{0pt}stabilization, and crossing change may each be obtained as applications of vertical (un)\nobreak\hspace{0pt}clasping, and thus any Weinstein domains $(W,\lambda,\phi')$, $(W',\lambda',\phi')$ related by these other moves provide the pages for abstract open books $(W,\mathrm{id})$ and $(W',\mathrm{id})$ whose resulting contact manifolds are contactomorphic.
\end{proof}

\begin{remark}
We recall a subtlety in the difference between stabilizations of partial and full open book decompositions; see \cref{subsec:OBD}. The open book $(W,\mathrm{id})$ is obtained by "closing up" the partial open book $(W,\mathrm{id}\colon S\to S)$, where $S$ is a collar neighborhood of $\partial W\subset W$.  While the proof above constructs a common stabilization of $(W,\mathrm{id})$ and $(W',\mathrm{id})$, it does not claim the existence of a common stabilization of $(W,\mathrm{id}\colon S\to S)$ and $(W',\mathrm{id}\colon S'\to S')$.  If we trace through the proof in the relative setting, we find stabilizations of each of $(W,\mathrm{id}\colon S\to S)$ and $(W',\mathrm{id}\colon S'\to S')$ that have the same page and have monodromy given by a positive Dehn twist about the same Lagrangian sphere. However, the domains of these monodromies could differ, resulting in distinct contact manifolds-with-boundary.
\end{remark}

A full converse to \cref{thm:moves_for_obds} seems unlikely, as this would imply that all subcritically fillable contact manifolds admit unique subcritical fillings, up to deformation equivalence.  However, the following theorem provides a characterization of Weinstein domains which are related by the moves described in \cref{thm:main_moves}.  We thank an anonymous referee for substantially simplifying its proof.

\movesSufficient*

\begin{proof}
In \cite{Cie02subscritical}, Cieliebak proves that all subcritical Weinstein domains are split according to the following strategy: suppose that $V$ is a subcritical Weinstein domain of dimension $2n+2$, obtained from $(W_0\times D^2,\lambda_0+\lambda_{\mathrm{std}},\phi_0+\phi_{\mathrm{std}})$, where $(W_0,\lambda_0,\phi_0)$ is a Weinstein domain, by attaching a Weinstein $k$-handle, for some $k\leq n$.  Letting $\tilde{f}\colon S^{k-1}\hookrightarrow\partial(W_0\times D^2)$ denote the isotropic attaching sphere of this handle, Cieliebak shows that $\tilde{f}$ is formally isotropically isotopic (and thus genuinely isotropically isotopic, being subcritical) to an isotropic sphere of the form $f\times\{0\}\subset \partial W_0\times D^2$, for some isotropic embedding $f\colon S^{k-1}\hookrightarrow\partial W_0$. It follows that $V$ is split with respect to the same $D^2$-factor which appears in $W_0\times D^2$; because any subcritical Weinstein domain may be obtained from $D^{2n}\times D^2$ by successively attaching subcritical Weinstein handles, the result follows inductively.

An important feature of Cieliebak's proof is that the isotropic isotopy type of $f$ is not uniquely determined by $\tilde{f}$.  However, we make the following claim:
\begin{quote}
\textbf{Claim.}  Up to possibly applying clasp moves as described in \cref{thm:main_moves}, $f$ is uniquely determined by $\tilde{f}$.
\end{quote}
The implication \eqref{thm-part:stably-almost-symplectomorphic} $\implies$ \eqref{thm-part:related-by-moves} will follow from this claim, as we may let $(V,\beta,\psi)$ denote a fixed Weinstein domain to which $(W\times D^2,d\lambda+\omega_{\mathrm{std}})$ and $(W'\times D^2,d\lambda'+\omega_{\mathrm{std}})$ are symplectic deformation equivalent --- the existence of which follows from the $h$-principle for subcritical Weinstein domains --- and any Weinstein handle decomposition of $(V,\beta,\psi)$ may be used to produce Weinstein handle decompositions of $(W,\lambda,\phi)$ and $(W',\lambda',\phi')$ which are related by clasp moves. The implication \eqref{thm-part:related-by-moves} $\implies$ \eqref{thm-part:stably-almost-symplectomorphic} was established in \cref{sec:introduction}, following the statement of \cref{thm:moves_sufficient_for_stable_equivalence}.

We now work towards our claim above by investigating the formal data associated to $f$ and $\tilde{f}$.  First, note that attaching a Weinstein handle to $W_0$ along $f$ requires a conformally symplectic trivialization of the conformal symplectic normal bundle of $f$ if $k<n$.  We will take this trivialization to be a complex bundle monomorphism $\theta\colon S^{k-1}\times\mathbb{C}^{n-k}\to f^*\xi_{W_0}$, where we have fixed some almost complex structure $J_0$ on $W_0$ compatible with the Weinstein structure.  The handle attachment is then carried out using a normal framing $\beta\colon S^{k-1}\times\mathbb{R}^{2n-k}\to\nu_f$ of $f$ in $\partial W_0$ and a complex bundle isomorphism $\gamma\colon S^{k-1}\times\mathbb{C}^n\to f^*TW_0$ defined by
\[
\beta = Jdf \oplus R_{\lambda_0} \oplus\theta\colon TS^{k-1}\oplus\underline{\mathbb{R}}\oplus\underline{\mathbb{C}}^{n-k}\to\nu_f
\]
and
\[
\gamma = df_{\mathbb{C}}\oplus R_{\lambda_0}\oplus X_{\lambda_0}\oplus\theta\colon (TS^{k-1}\otimes\mathbb{C})\oplus\underline{\mathbb{R}}^2\oplus\underline{\mathbb{C}}^{n-k}\to f^*TW_0,
\]
respectively, where
\begin{itemize}
    \item $R_{\lambda_0}$ and $X_{\lambda_0}$ denote the Reeb and Liouville vector fields associated to $\lambda_0$, respectively;
    \item we identify $TS^{k-1}\oplus\underline{\mathbb{R}}$ with the trivial bundle $S^{k-1}\times\mathbb{R}^k$ via the natural embedding of $S^{k-1}$ into $\mathbb{R}^k$;
    \item $df_\mathbb{C}\colon TS^{k-1}\otimes\mathbb{C}\to f^*\xi_{W_0}$ denotes the complexification of $df$, a complex bundle monomorphism.
\end{itemize}
There are analogous framings $\tilde{\beta}, \tilde{\gamma}$ associated to $\tilde{f}$, and Cieliebak's work constructs $f$ so that the isotopy connecting $\tilde{f}$ to $f\times\{0\}$ is covered by isotopies connecting $\tilde{\beta}$ and $\tilde{\gamma}$ to $\beta\times\mathrm{id}_\mathbb{C}$ and $\gamma\times\mathrm{id}_\mathbb{C}$, respectively.  Now suppose that $\gamma'\colon S^{k-1}\times\mathbb{C}^n\to f^*TW_0$ is some other complex bundle isomorphism with the property that $\gamma'\times\mathrm{id}_\mathbb{C}$ and $\gamma\times\mathrm{id}_\mathbb{C}$ are isotopic.  By defining $g(p):=\gamma_p^{-1}\circ\gamma'_p$ for all $p\in S^{k-1}$, we obtain $[g]\in\pi_{k-1}U(n)$ relating $\gamma$ to $\gamma'$.  Moreover, because $\gamma'\times\mathrm{id}_\mathbb{C}$ and $\gamma\times\mathrm{id}_\mathbb{C}$ are isotopic, $[g]$ is contained in the kernel of the map
\[
\pi_{k-1}U(n) \xrightarrow{\times\mathrm{id}_\mathbb{C}} \pi_{k-1}U(n+1).
\]
But since $k\leq 2n$, we know from \cite[Lemma 23.4]{milnor1963morse} that this map is an isomorphism. So $[g]$ is trivial and we see that the homotopy class of $\gamma$ in the space of complex bundle isomorphisms is uniquely determined by $\tilde{\gamma}$. It follows that $\tilde{\gamma}$ determines the homotopy class of
\[
df_{\mathbb{C}}\colon TS^{k-1}\otimes\mathbb{C}\to f^*\xi_{W_0}
\]
in the space of complex bundle monomorphisms, where $\xi_{W_0}$ is the contact structure on $\partial W_0$ induced by $\lambda_0$.

With the homotopy class of $df_{\mathbb{C}}$ now determined, there are two cases.  If $k<n$, then the $h$-principle for subcritical isotropic embeddings (c.f. \cite[Theorem 12.4.1]{eliashberg2002h}) tells us that $f$ is determined up to isotopy through isotropic embeddings, so we are done.  If $k=n$, then $f$ is a Legendrian embedding and the homotopy class of $df_{\mathbb{C}}$ is its rotation class.  The $h$-principle for Legendrian immersions (c.f. \cite{gromov1971topological,duchamp1984classification,gromov1986partial} and \cite[Theorem 16.1.3]{eliashberg2002h}) then tells us that $f$ is determined up to regular homotopy through Legendrian immersions\footnote{By applying the reasoning used for $\gamma$ to $\beta$, we find that $\tilde{\beta}$ determines $\beta$ up to choosing an element of $\ker(\pi_{k-1}SO(2n-k)\to\pi_{k-1}SO(2n+2-k))$.  This kernel can be nontrivial when $k=n$, reflecting the fact that formal isotopy types can vary under regular homotopy through Legendrian immersions.}.  Concretely, if $f_0,f_1\colon S^{n-1}\hookrightarrow \partial W_0$ are two Legendrian embeddings with equivalent rotation classes, then we may construct a regular homotopy $f_t$, $0\leq t\leq 1$, of Legendrian immersions such that $f_t\colon S^{n-1}\hookrightarrow \partial V$ fails to be a Legendrian embedding for finitely many values of $t$, at which times $f_t$ includes a $\xi$-transverse self-intersection.  But this is precisely the failure of Legendrian embedding modeled by the clasp move, so front projections of $f_0$ and $f_1$ are related by a sequence of clasp moves.
\end{proof}

We conclude this subsection with a brief discussion of how the local moves of \cref{thm:main_moves} affect the formal data associated to a Legendrian submanifold.  The key point in our bypass-free proof of \cref{thm:moves_for_obds} and in proving the implication \eqref{thm-part:related-by-moves} $\implies$ \eqref{thm-part:stably-almost-symplectomorphic} of \cref{thm:moves_sufficient_for_stable_equivalence} was that neither the (de)stabilization move nor the (un)clasping move will modify the rotation class of a Legendrian attaching sphere.  Indeed, the (de)stabilization move preserves the formal Legendrian isotopy class altogether in ambient contact dimension at least 5.  However, the (un)clasping move will typically have a nontrivial effect on the formal Legendrian isotopy class.  If the two strands depicted in the top left of \cref{fig:thm_moves} are oppositely-oriented portions of the same Legendrian $n$-sphere $\Lambda$, then applying the clasp move amounts to performing a \emph{$B^n$-stabilization}, in the language of \cite[Section 7.4]{cieliebak2012stein} or \cite[Section 2.2.1]{casals2020non} (also see \cite[Section 4.3]{ekholm2005non}).  Because $\chi(B^n)=1$, the discussion at the end of \cite[Appendix B]{cieliebak2012stein} tells us that this move preserves the formal Legendrian isotopy class of $\Lambda$ only when $n=2$.  In the particular case where $n$ is odd and $\Lambda$ is a homologically trivial knot, applying the clasp move will decrease the Thurston-Bennequin number by 2.

\subsection{Convex and Weinstein hypersurfaces}\label{sec:subcritically-examples}
Returning to the setting of contact manifolds, one consequence of \cref{thm:moves_sufficient_for_stable_equivalence} is a construction of interesting convex hypersurfaces in contact manifolds.

\highDimCor*

\begin{proof}
By \cref{thm:moves_sufficient_for_stable_equivalence}, these $W$ and $W'$ are related by the moves of \cref{thm:main_moves}.  It follows that the contact handlebody $N(W')$ embeds into $N(W)$ in such a way that the $N(W)-N(W')$ is a bypass cobordism; we then let $\Sigma'=\partial N(W')$.
\end{proof}

While our primary interest in this subsection will be convex hypersurfaces, we observe here that \cref{thm:main_moves} also allows us to recover the following existence $h$-principle for Weinstein hypersurfaces.

\weinsteinHPrinciple*

\begin{proof}
Because $2n\geq 6$, we may apply the stabilizing move of \cref{thm:main_moves} to $W$ to produce a flexible Weinstein hypersurface $W_{\mathrm{flex}}\subset M$ smoothly isotopic to $W$, and the contact handlebody $N(W_{\mathrm{flex}})$ will be contained in $N(W)$.  Similarly, $(W',\lambda',\phi')$ is related to its flexibilization via (de)stabilizing moves, leading to an embedding of the (abstract) contact handlebody $N(W')$ into $N(W'_{\mathrm{flex}})$.  Because $W$ and $W'$ are almost symplectomorphic, $W_{\mathrm{flex}}$ and $W'_{\mathrm{flex}}$ are as well, and thus by the $h$-principle for flexible Weinstein domains \cite[Theorem 14.5]{cieliebak2012stein} we may identify $N(W'_{\mathrm{flex}})$ with $N(W_{\mathrm{flex}})$.  Thus we have an embedding of $N(W')$ into $N(W_{\mathrm{flex}})$, which is in turn embedded into $N(W)$, with $W'$, $W_{\mathrm{flex}}$, and $W$ pairwise smoothly isotopic to one another by \cref{thm:main_moves}.
\end{proof}

\begin{remark}
Notice that the hypotheses of \cref{cor:weinstein-h-principle} cannot be weakened to those of \cref{cor:subcritically-fillable}, since the condition that $W$ and $W'$ become almost symplectomorphic after one stabilization fails to ensure that $W$ and $W'$ are even diffeomorphic. One may then ask whether the conclusion of \cref{cor:weinstein-h-principle} holds under the assumption that $W$ and $W'$ are diffeomorphic and have almost symplectomorphic stabilizations; the authors are unaware of an answer, and thank an anonymous referee for this question.
\end{remark}

The literature contains many pairs of Weinstein domains satisfying the hypotheses of \cref{cor:subcritically-fillable}.  In the remainder of this subsection we highlight a few examples which yield noteworthy convex hypersurfaces. 

\subsubsection{Weinstein Mazur manifolds and convex decompositions of $S^4$}\label{subsec:mazur}
In \cite{mazur1961note}, Mazur described a technique for producing a contractible, compact 4-manifold $M$ which is not diffeomorphic to $D^4$, despite having the property that $M\times I$ is diffeomorphic to $D^5$.  Namely, the \emph{Mazur manifold} $M_K$ is constructed by attaching a 2-handle to $S^1\times D^3$ along a knot $K\subset\partial(S^1\times D^3)\simeq S^1\times S^2$ which represents a homology generator of $H_1(S^1\times S^2;\mathbb{Z})$.  Mazur observed that the 5-manifold $M_K\times I$ is constructed by attaching to $D^5$ a 1-/2-handle pair in algebraically canceling position; because these are 5-dimensional handles, the attaching sphere of the 2-handle can be untied, meaning that the 1-/2-handle pair is genuinely canceling.  So $M_K\times I$ is in fact diffeomorphic to $D^5$.  This construction works just as well in the symplectic category, where we may attach an algebraically cancelling pair of Weinstein handles; an example of such a Weinstein Mazur manifold is seen in \cref{fig:mazur}.  (In fact, while we will not reproduce details here, Hayden-Mark-Piccirillo \cite{hayden2021exotic} and Akbulut-Yildiz \cite{akbulut2019knot} have exhibited exotic pairs of Weinstein Mazur manifolds --- that is, pairs of contractible Weinstein domains which are homeomorphic, but not diffeomorphic.)

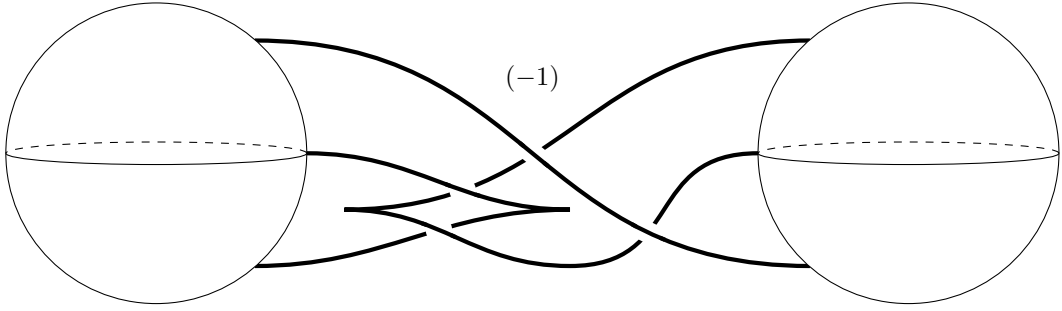
\begin{figure}
    \centering
    \begin{tikzpicture}[xscale=1,yscale=0.5]

\begin{knot}[
    style=thick,
	clip width=5,
	clip radius=5pt,
	ignore endpoint intersections=false,
 	flip crossing/.list={4,6,8,10,12}]
\strand[ultra thick] (-3.69,3) to[out=0,in=180]
	    (3.69,-3);
	    
\strand[ultra thick] (-3,0) to[out=0,in=180]
	    (0.5,-1.5) to[out=180,in=0]
	    (-3.69,-3);
	
\strand[ultra thick] (3.69,3) to[out=180,in=0]
	    (-2.5,-1.5) to[out=0,in=180]
	    (0.5,-3) to[out=0,in=180]
	    (3,0);
\end{knot}

\draw[fill=white] (-5,0) ellipse (2 and 4);
\draw (-7,0) arc (180:360:2 and 0.3);
\draw[dashed] (-3,0) arc (0:180:2 and 0.3);

\draw[fill=white] (5,0) ellipse (2 and 4);
\draw (3,0) arc (180:360:2 and 0.3);
\draw[dashed] (7,0) arc (0:180:2 and 0.3);
\draw (0,2) node {\small $(-1)$};

\end{tikzpicture}
    \caption{A handle diagram for a Weinstein Mazur manifold.}
    \label{fig:mazur}
\end{figure}

Using $(W_L,\lambda_L,\phi_L)$ to denote the Weinstein domain resulting from our construction above, for some Legendrian $L\subset(S^3,\xi_{\mathrm{std}})$, the crossing change move described in \cref{subsec:moves_dim5} identifies a sequence of bypasses which may be attached to $(D^5,\xi_{\mathrm{std}})$ to produce $(W_L\times I,\ker(dt+\lambda_L))$, as well as a complementary sequence of bypasses whose attachment to $(W_L\times I,\ker(dt+\lambda_L))$ produces $(D^5,\xi_{\mathrm{std}})$.  That is, the contact structure $\ker(dt+\lambda_L)$ on $W_L\times I\simeq D^5$ is certainly nonstandard --- as witnessed by the fact the failure of the positive and negative regions of its boundary to be balls --- but \cref{subsec:moves_dim5} gives an explicit sequence of bypasses relating $\ker(dt+\lambda_L)$ to $\xi_{\mathrm{std}}$.

Having produced a nonstandard contact structure $\ker(dt+\lambda_L)$ on $W_L\times I\simeq D^5$, one may hope to produce an exotic contact structure on $S^5$ via doubling --- that is, by writing $S^5$ as $(W_L\times I)\cup -(W_L\times I)$.  However, \cref{thm:moves_for_obds} tells us that the resulting contact structure is standard.  Indeed, let $\xi$ denote the contact structure on $S^5$ obtained in this manner; equivalently, $\xi$ is the contact structure on $S^5$ supported by the open book decomposition with page $W_L$ and identity monodromy.  The crossing change move of \cref{subsec:moves_dim5} may be used to untie the Legendrian knot $L\subset(S^1\times S^2,\xi_{\mathrm{std}})$, and \cref{thm:moves_for_obds} tells us that $\xi$ is supported by an open book decomposition whose page admits a handle decomposition consisting of a single 1-handle and a 2-handle attached along a knot which passes over the 1-handle once; the monodromy of this open book decomposition is the identity map.  Independent of the Legendrian isotopy type of the Legendrian knot, the 1- and 2-handles used to construct the page form a canceling pair, and thus $\xi$ is supported by the abstract open book decomposition $(D^4,\mathrm{id})$ --- that is, $\xi$ is standard.  This construction produces an infinite number of distinct OBDs $(W^4,\mathrm{id})$ for $(S^5,\xi_{\mathrm{std}})$ whose pages are contractible.

\begin{remark}
It may be tempting to generalize this discussion to all dimensions, but \cite[Corollary 6.6]{haefliger1966differentiable} tells us that Legendrian spheres in dimension $2n-1$ cannot be interestingly knotted for $n\geq 3$, and thus an algebraically canceling $(n-1)/n$-handle pair is genuinely canceling.  Therefore, the only Weinstein domain constructed according to the above recipe is $(D^{2n},\lambda_{\mathrm{std}},\phi_{\mathrm{std}})$. 
\end{remark}

The fact that $\mathcal{OB}(W,\mathrm{id}) = (S^5,\xi_{\mathrm{std}})$ when $(W^4,\lambda,\phi)$ is a Weinstein Mazur manifold is seen just as easily using the diagram moves described in \cite[Section 4]{Ding2012Diagrams}.  The novelty of the moves described in \cref{thm:main_moves} is that, according to \cref{cor:subcritically-fillable}, each Weinstein Mazur manifold gives rise to a distinct convex decomposition of $S^4\subset(S^5,\xi_{\mathrm{std}})$, and the moves in \cref{thm:main_moves} produce explicit smooth isotopies between each of these convex spheres.  All of the phenomena observed for Weinstein Mazur manifolds --- e.g., the exotic examples of \cite{hayden2021exotic,akbulut2019knot} --- can thus be witnessed in the convex decompositions of 4-spheres in $(S^5,\xi_{\mathrm{std}})$.

\subsubsection{Plane bundles over $S^2$ and convex decompositions of $S^2\times S^2$}
Upon demonstrating in \cite{Cie02subscritical} that subcritical Weinstein domains are split, Cieliebak gave an infinite family $W^k_m$, $k\in\mathbb{Z}$, $m\in\mathbb{N}$, of Weinstein domains with the property that $W^k_m\times D^2$ is symplectic deformation equivalent to $W^k_{m'}\times D^2$, for any $m,m'\in\mathbb{N}$.  Namely, $(W^k_m,\lambda^k_m,\phi^k_m)$ is the 2-plane bundle over $S^2$ with Euler class $e=-2(k+1+m)\in H^2(S^2;\mathbb{Z})$ and carries a compatible almost complex structure $J^k_m$ with first Chern class $c_1(J^k_m)=2k$; this Weinstein domain may be realized by attaching a Weinstein 2-handle to $(D^4,\lambda_{\mathrm{std}},\phi_{\mathrm{std}})$ along a Legendrian unknot in $(S^3,\xi_{\mathrm{std}})$ with Thurston-Bennequin number $1-2(k+1+m)$ and rotation number $2k$.  See \cref{fig:cieliebak}.  The handle diagrams of $W^k_m$ and $W^k_{m'}$ are then related by $|m-m'|$ applications of the (de)stabilization move described in \cref{prop:stabilizing_bypass}.

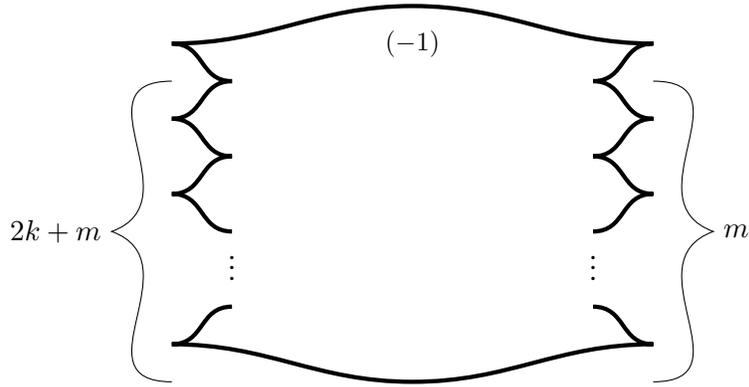
\begin{figure}
    \centering
    \begin{tikzpicture}[xscale=0.8,yscale=0.5]

\begin{knot}
\strand[ultra thick] (-3,4) to[out=180,in=0]
	    (-4,5) to[out=0,in=180]
	    (-3,6) to[out=180,in=0]
	    (-4,7) to[out=0,in=180]
	    (-3,8) to[out=180,in=0]
	    (-4,9) to[out=0,in=180]
	    (0,10) to[out=0,in=180]
	    (4,9) to[out=180,in=0]
	    (3,8) to[out=0,in=180]
	    (4,7) to[out=180,in=0]
	    (3,6) to[out=0,in=180]
	    (4,5) to[out=180,in=0]
	    (3,4);
	    
\strand[ultra thick] (-3,2) to[out=180,in=0]
	    (-4,1) to[out=0,in=180]
	    (0,0) to[out=0,in=180]
	    (4,1) to[out=180,in=0]
	    (3,2);
	    
\strand (4,0) to[out=0,in=210]
	    (5,4) to[out=150,in=0]
	    (4,8);
	    
\strand (-4,0) to[out=180,in=330]
	    (-5,4) to[out=30,in=180]
	    (-4,8);

\end{knot}

\node[right] at (5,4) {$m$};
\node[left] at (-5,4) {$2k+m$};

\node at (-3,3.25) {$\vdots$};
\node at (3,3.25) {$\vdots$};

\node at (0,9) {\small $(-1)$};

\end{tikzpicture}
    \caption{A handle diagram for $W^k_m$.}
    \label{fig:cieliebak}
\end{figure}

From each $W^k_m$ we obtain a contact handlebody $M_k^m = W_k^m \times [-1,1]$ whose rounded boundary is diffeomorphic to $S^2\times S^2$, being the double of a 2-plane bundle over $S^2$ with even Euler class.  However, each $W^k_m$ gives a distinct convex decomposition of $S^2\times S^2$, and for $m\neq m'$ the moves of \cref{thm:main_moves} provide a smooth isotopy between $W^k_m\cup(-W^k_m)$ and $W^k_{m'}\cup(-W^k_{m'})$ inside $M_k^m$.

\subsubsection{Exotic convex decompositions}
In \cite{lazarev2020contact}, Lazarev constructed the first infinite families of almost symplectomorphic Weinstein domains whose contact boundaries are not contactomorphic.  In particular, for any almost contact manifold $(Y^{2n-1},J)$, $n\geq 4$, with $c_1(Y,J)=0$, with almost Weinstein filling $W$, there is an infinite family of distinct contact structures on $Y$ which are pairwise almost contactomorphic, each of which admits a Weinstein filling which is almost symplectomorphic to $W\natural P$, where $P$ is a certain plumbing of $T^*S^n$ depending only on $n$.  See \cite[Theorem 1.14]{lazarev2020contact}.  Because the contact structures are distinct, the Weinstein fillings necessarily are as well.

Now any pair $W_0,W_1$ of these Weinstein fillings are almost symplectomorphic, and thus the same is true of $W_0\times D^2$ and $W_1\times D^2$.  It then follows from \cref{thm:moves_sufficient_for_stable_equivalence} that $W_0$ and $W_1$ are related by the moves described in \cref{thm:main_moves}, and that the contact handlebodies over $W_0$ and $W_1$, respectively, may be obtained from one another by attaching a sequence of bypasses.  In the contact manifold $\partial(W_0\times D^2)\cong\partial(W_1\times D^2)$ we may therefore find smoothly isotopic surfaces $\Sigma_0,\Sigma_1$ with $R_+(\Sigma_j)$ symplectic deformation equivalent to $W_j$, for $j=0,1$.  In this case, unlike the previous examples, $R_+(\Sigma_0)$ and $R_+(\Sigma_1)$ are almost symplectomorphic, and thus form a different sort of exotic pair of convex decompositions.  Note that the dividing sets of $\Sigma_0$ are $\Sigma_1$ are similarly almost contactomorphic --- but not genuinely contactomorphic --- to one another.

These examples generalize a phenomenon we can witness with McLean's exotic Weinstein structures on $D^{2n}$ for $n\geq 4$ \cite{mclean2009lefschetz}.  McLean constructed a Weinstein structure on $D^{2n}$ for every $k\in\mathbb{N}$, and we denote the Weinstein domain by $D^{2n}_k$.  The Weinstein domains $D^{2n}_k$ and $D^{2n}_{k'}$ are not symplectic deformation equivalent for $k\neq k'$, but $D^{2n}_k\times D^2$ and $D^{2n}_{k'}\times D^2$ must be almost symplectomorphic to $D^{2n+2}_{\mathrm{std}}$, since $D^{2n+2}$ has a unique almost symplectic structure.  A combination of \cref{thm:moves_sufficient_for_stable_equivalence} and \cref{thm:main_moves} then tells us that the contact handlebodies over $D^{2n}_k$ and $D^{2n}_{k'}$, respectively, each of which sit inside of $(S^{2n+1},\xi_{\mathrm{std}})$, are related by a sequence of bypass attachments.  As a result, we obtain exotic convex decompositions of $S^{2n}\subset (S^{2n+1},\xi_{\mathrm{std}})$.  That is, we have for each $k\in\mathbb{N}$ a realization of $S^{2n}$ as a convex hypersurface in $(S^{2n+1},\xi_{\mathrm{std}})$ whose positive and negative regions are disks, but distinct $k$ give convex decompositions whose positive regions are not symplectic deformation equivalent to one another. Note that the contactomorphism type of the resulting dividing set is also unknown.

\bibliography{references}
\bibliographystyle{amsalpha}

\end{document}